\newcommand{\as}{\\[.6em]}
\newcommand{\AS}{\\[1.2em]}
\newcommand{\dis}{\displaystyle}
\newcommand{\bela}[1]{\begin{equation}\label{#1}}
\newcommand{\ela}{\end{equation}}
\newcommand{\bear}[1]{\begin{array}{#1}}
\newcommand{\ear}{\end{array}}
\newcommand{\diag}{\operatorname{diag}}
\newcommand{\one}{\mathbbm{1}}
\newcommand{\tb}{\bar{2}}
\newcommand{\ob}{\bar{1}}
\newcommand{\hb}{\bar{3}}
\newcommand{\Z}{\mathbbm{Z}}
\renewcommand{\P}{\mathbbm{P}}
\newcommand{\R}{\mathbbm{R}}
\newcommand{\C}{\mathbbm{C}}
\newcommand{\De}{D^{\rm e}}
\newcommand{\Do}{D^{\rm o}}
\newcommand{\Msf}{\mathsf{M}}
\newcommand{\Vsf}{\mathsf{V}}
\newcommand{\lsf}{\mathsf{l}}
\newcommand{\psf}{\mathsf{p}}
\newcommand{\asf}{\mathsf{a}}
\newcommand{\bsf}{\mathsf{b}}
\newcommand{\csf}{\mathsf{c}}
\newcommand{\gsf}{\mathsf{g}}
\newcommand{\Lsf}{\mathsf{L}}
\newcommand{\ssf}{\mathsf{s}}
\newcommand{\xsf}{\mathsf{x}}
\newcommand{\ysf}{\mathsf{y}}
\newcommand{\Bsf}{\mathsf{B}}
\newcommand{\Csf}{\mathsf{C}}
\newcommand{\Esf}{\mathsf{E}}
\newcommand{\Lambdasf}{\mathsf{\Lambda}}
\newcommand{\calM}{\mathcal{M}}
\newcommand{\calV}{\mathcal{V}}
\newcommand{\calW}{\mathcal{W}}
\newcommand{\dualu}{\mathfrak{u}}
\newcommand{\dualv}{\mathfrak{v}}
\theoremstyle{plain}
\newtheorem{theorem}{Theorem}[section]
\newtheorem{corollary}[theorem]{Corollary}
\newtheorem{lemma}[theorem]{Lemma}
\theoremstyle{definition}
\newtheorem{definition}[theorem]{Definition}
\newtheorem{remark}[theorem]{Remark}
\numberwithin{equation}{section}
\begin{document}

\begin{center} 
 {\Large\bf Circle complexes \medskip and the\\ discrete CKP equation}
\end{center}

\smallskip

\begin{center}
 \sc
 A.I.\ Bobenko$\,^{1}$ and W.K.\ Schief$\,^{2,3}$ 
\end{center}

\smallskip

\begin{center}
 \small\sl
$^1$ Institut f\"ur Mathematik, Technische Universit\"at Berlin, Stra\ss e des 17.\ Juni 136, 10623 Berlin, Germany\\[2mm]
$^2$ School of Mathematics and Statistics, The University of New South Wales, Sydney, NSW 2052, Australia\\[2mm]
$^3$ Australian Research Council Centre of Excellence for Mathematics and Statistics of Complex Systems, School of Mathematics and Statistics, The University of New South Wales, Sydney, NSW 2052, Australia
\end{center}

\begin{abstract}
In the spirit of Klein's Erlangen Program, we investigate the geometric and algebraic structure of fundamental line complexes and the underlying privileged discrete integrable system for the minors of a matrix which constitute associated Pl\"ucker coordinates. Particular emphasis is put on the restriction to Lie circle geometry which is intimately related to the master dCKP equation of discrete integrable systems theory. The geometric interpretation, construction and integrability of fundamental line complexes in M\"obius, Laguerre and hyperbolic geometry are discussed in detail. In the process, we encounter various avatars of classical and novel incidence theorems and associated cross- and multi-ratio identities for particular hypercomplex numbers. This leads to a discrete integrable equation which, in the context of M\"obius geometry, governs novel doubly hexagonal circle patterns. 
\end{abstract}

\section{Introduction}

Line congruences, that is, two-parameter families of lines, constitute fundamental objects in classical differential geometry \cite{Finikov59}. In particular, normal and Weingarten congruences have been studied in great detail \cite{Eisenhart60}. Their importance in connection with the geometric theory of integrable systems has been well documented (see \cite{Doliwa01} and references therein). Recently, in the context of integrable discrete differential geometry \cite{BobenkoSuris09}, attention has been drawn to {\em discrete line congruences}~\cite{DoliwaSantiniManas00}, that is, two-parameter families of lines which are (combinatorially) attached to the vertices of a $\Z^2$ lattice. Discrete Weingarten congruences have been shown to lie at the heart of the B\"acklund transformation for discrete pseudospherical surfaces \cite{Sauer50,Wunderlich51,BobenkoPinkall96}. Discrete normal congruences have been used to define Gaussian and mean curvatures and the associated Steiner formula for discrete analogues of surfaces parametrised in terms of curvature coordinates \cite{Schief03,Schief06,BobenkoPottmannWallner2010}. Discrete line congruences have also found important applications in architectural geometry \cite{WangJiangBompasPottmann13}.

Discrete line congruences in a projective space of arbitrary dimension may be consistently extended to ``lattices of lines'' of $\Z^N$ combinatorics if these admit some elementary geometric properties. These lattices of lines are termed rectilinear line congruences in \cite{DoliwaSantiniManas00}. Their essential properties are encoded in the three-dimensional sublattices of lines, that is, maps of the form
\bela{I1}
  \lsf : \Z^3\rightarrow{\{\mbox{lines in $\C\P^d$}}\}.
\ela
A combinatorial picture of an elementary cube of such a lattice is depicted in Figure \ref{linecomplex}.
\begin{figure}
\centerline{\includegraphics[scale=0.5]{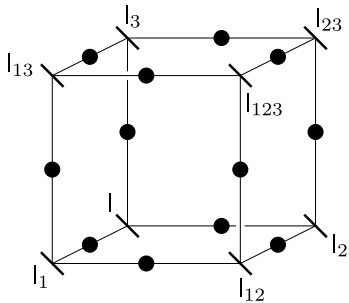}}
\caption{A combinatorial picture of a line complex $\lsf$ which admits the ``intersection property''. The lines are combinatorially attached to the vertices of a $\Z^3$ lattice. Any two neighbouring lines intersect in a point which may be associated with the corresponding edge.}
\label{linecomplex}
\end{figure} 
It will be seen that the assumption of a complex projective space is crucial in the classification of these three-parameter families of lines which we have termed (discrete) {\em fundamental line complexes} in \cite{BobenkoSchief15}. For $d\geq4$, these are characterised by the property that neighbouring lines intersect (cf.\ Figure~\ref{linecomplex}). In the case of a three-dimensional ambient space ($d=3$), one imposes the additional requirement that the discrete line complexes may be regarded as projections of discrete line complexes (which have the ``intersection property'') in a four-dimensional projective space. In \cite{BobenkoSchief15}, it has been demonstrated that this property is naturally encoded in a geometric {\em correlation} between the two lines attached to any pair of ``opposite'' vertices of any elementary cube of the $\Z^3$ lattice such as the pair $\lsf,\lsf_{123}$ in Figure \ref{linecomplex}. Thus, a ``hexagon of lines'' such as $[\lsf_1,\lsf_{12},\lsf_2,\lsf_{23},\lsf_3,\lsf_{13}]$ uniquely determines a correlation of  $\C\P^3$ \cite{OnishchikSulanke06,SempleKneebone52}
which maps opposite lines to each other. Given a member $\lsf$ of the one-parameter family of lines intersecting the lines $\lsf_1,\lsf_2,\lsf_3$, this polarity selects a line $\lsf_{123}$ which intersects the lines $\lsf_{12},\lsf_{23},\lsf_{13}$. It turns out that the notion of correlated lines plays a key role in the analysis of canonical classes of fundamental line complexes.

In \cite{BobenkoSchief15}, fundamental line complexes in $\C\P^3$ have been shown to be governed by a privileged discrete system which is multi-dimensionally consistent \cite{BobenkoSuris09}. The algebraic integrability of this {\em $M$-system} implies the geometric integrability of fundamental line complexes in $\C\P^3$. In concrete terms, the $M$-system may be regarded as an integrable system of equations for the minors of a matrix $\calM$. This integrable system determines the discrete evolution of a $2\times2$ submatrix, the minors of which constitute the Pl\"ucker coordinates of the lines of the fundamental line complexes. Any admissible constraint imposed on the $M$-system restricts the class of admissible lines and therefore the admissible points in the associated four-dimensional complex Pl\"ucker quadric. This is the starting point of our discussion in this paper. Thus, if the matrix $\calM$ is real then we obtain real fundamental line complexes. If $\mathcal{M}$ is Hermitian then the restriction of the complex Pl\"ucker quadric may be identified with a real Lie quadric of signature $(4,2)$ \cite{Blaschke29} so that the fundamental complexes of intersecting lines may be interpreted as fundamental complexes of touching oriented spheres in $\R^3$. If, in addition, $\cal{M}$ is real so that $\calM$ is symmetric then the Pl\"ucker quadric is restricted to a three-dimensional real quadric of signature $(3,2)$ which may be interpreted as a Lie quadric. Accordingly, the lines of the fundamental complex may be interpreted as circles in the real plane which touch each other in an oriented manner \cite{Blaschke29}. This is the case which we investigate in great detail in this paper. In algebraic terms, it is encoded in the master dCKP equation of discrete integrable systems theory \cite{Kashaev96,Schief03b} which has also appeared in various other contexts such as cluster algebras and dimer configurations \cite{KenyonPemantle13} and the `principal minor assignment problem'~\cite{HoltzSturmfels07}. It turns out that the action of the above-mentioned correlation of $\C\P^3$ leaves invariant the restriction of the Pl\"ucker quadric to the Lie quadric. Accordingly, the correlation acts in such a manner that circles are mapped to circles. The multi-dimensional consistency of the real symmetric $M$-system therefore implies that planar lattices of touching correlated circles of $\Z^3$ combinatorics are integrable in that these fundamental circle complexes constitute multi-dimensionally consistent 3D geometric configurations. Hence, two-dimensional sublattices of fundamental circle complexes, that is, oriented circles packings of $\Z^2$ combinatorics are integrable in the same sense as discrete asymptotic, conjugate and curvature nets \cite{BobenkoSuris09}. 

The construction of fundamental circle complexes is intimately related to the classical Apollonius problem \cite{Coolidge16}. Indeed, in the case of fundamental circle complexes, the lines in Figure \ref{linecomplex} are interpreted as circles. Hence, given a ``hexagon of circles" $[\csf_1,\csf_{12},\csf_2,\csf_{23},\csf_3,\csf_{13}]$ which are in oriented contact, there exist at most two circles $\csf$ which are in oriented contact with the circles $\csf_1,\csf_2,\csf_3$ and at most two oriented circles $\csf_{123}$ which touch the circles $\csf_{12},\csf_{23},\csf_{13}$. The correlation associated with the hexagon of circles then provides a link between the solutions of these two Apollonius problems. Thus, any choice of the circle $\csf$ leads to a unique choice of the circle $\csf_{123}$. Since this correlation of circles renders fundamental circle complexes integrable, it is important to examine its algebraic and geometric properties in more detail. In terms of line geometry, restricting the real Pl\"ucker quadric to a Lie quadric corresponds to confining the class of admissible lines in $\R\P^3$ to a {\em linear line complex} \cite{SempleKneebone52}. Thus, there exists a one-to-one correspondence between oriented circles in the plane and lines of a linear line complex. As mentioned in the preceding, the correlation acts within the linear line complex and, hence, the correlation of two lines of a linear line complex corresponds to the correlation of two circles. We show that there exists an elementary M\"obius geometric construction of pairs of correlated circles which is also meaningful in Lie geometric terms. 

It is well known that Lie circle geometry contains M\"obius, Laguerre and hyperbolic geometry (in a sense to be specified) as subgeometries. It is therefore natural to inquire as to the existence of fundamental complexes in these geometries. To this end, we first examine in detail fundamental point-circle complexes in M\"obius geometry. Their existence in terms of a well-posed Cauchy problem and their integrability are shown both geometrically and algebraically. For the construction of these complexes and their multi-dimensional extensions, we make use of known and novel incidence theorems which may be proven in a purely combinatorial manner by means of cross-ratio identities. These include Miquel's and Clifford's circle theorems \cite{Clifford71,Pedoe88}. If the complex numbers in the cross-ratios are replaced by double numbers \cite{Yaglom68} then the analysis undertaken in the M\"obius geometric context may be translated directly into the language of hyperbolic geometry. This leads to the construction and integrability of fundamental geodesic-circle complexes. Furthermore, the same approach is valid if one considers dual numbers \cite{Yaglom68}, leading to line-circle complexes in Laguerre geometry. However, interestingly, these turn out to be anti-fundamental in a sense to be discussed subsequently. The (anti-)fundamental complexes in the subgeometries of M\"obius geometry considered here are shown to be governed by a novel 2D discrete integrable equation involving multi-ratios. This is consistent with the fact that the geometric Cauchy data for the complexes are one-dimensional. Indeed, the construction of an $A_2$ ``slice'' of a(n) (anti-)fundamental complex suffices to determine the entire complex. In the context of M\"obius geometry, this implies that fundamental point-circle complexes are encoded in novel doubly hexagonal circle patterns. In this connection, it noted that other types of integrable hexagonal circle patterns have been discussed in \cite{BobenkoHoffmannSuris02,BobenkoHoffmann03}.

\section{\boldmath Discrete line complexes and the $M$-system}

Here, we review the relevant geometric and algebraic properties of the fundamental line complexes analysed in detail in \cite{BobenkoSchief15}. A {\em (discrete) line complex} in a three-dimensional complex projective space $\C\P^3$ is a three-parameter family of lines which are combinatorially attached to the vertices of a $\Z^3$ lattice, that is, a map
\bela{E1}
  \lsf:\Z^3\rightarrow\{\mbox{lines in $\C\P^3$}\}.
\ela
Whenever possible, we suppress the discrete independent variables and indicate their relative increments by, for instance,
\bela{E2}
  \lsf = \lsf(n_1,n_2,n_3),\quad \lsf_1 = \lsf(n_1+1,n_2,n_3),\quad \lsf_{23} = \lsf(n_1,n_2+1,n_3+1).
\ela
Decrements are encoded in the notation $\lsf_{\bar{1}} = \lsf(n_1-1,n_2,n_3)$. If two neighbouring lines $\lsf$ and $\lsf_l$ intersect then we denote their point of intersection by
\bela{E3}
  [\psf^l] = \lsf\cap\lsf_l
\ela
with $\psf^l\in\C^4$ being homogeneous coordinates of $[\psf^l]\in\C\P^3$. However, in the following, as far as  nomenclature is concerned, we will not distinguish between objects in a projective space and their representation in terms of homogeneous coordinates. For instance, we will simply refer to $\psf^l$ as the point of intersection of $\lsf$ and $\lsf_l$.

\subsection{Fundamental line complexes}

{\em Fundamental line complexes} are now defined in terms of the eight lines attached to the vertices of any elementary cube of the $\Z^3$ lattice (cf.\ Figure \ref{fundamental}).
\begin{figure}
\centerline{\includegraphics[scale=0.5]{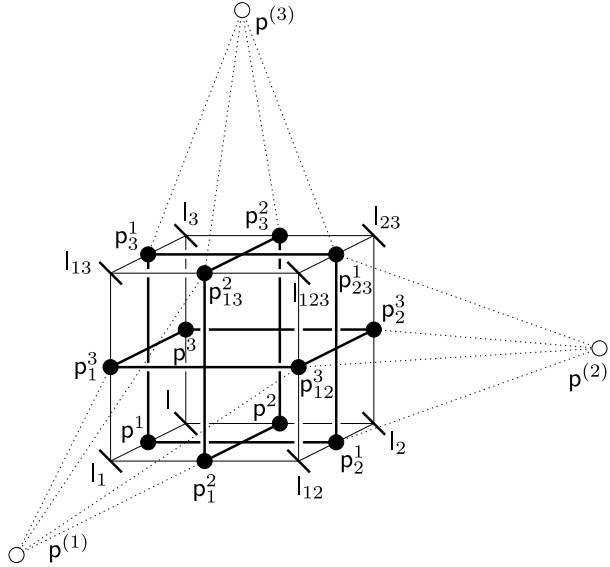}}
\caption{A combinatorial picture of a fundamental line complex $\lsf$. The four points of intersection attached to any four edges of the same ``type'' are coplanar. This is equivalent to the concurrency of any four ``diagonals'' of the same ``type'' indicated by the dotted lines.}
\label{fundamental}
\end{figure} 
\begin{definition}\label{line_complex}
A line complex $\lsf:\Z^3\rightarrow\{\mbox{lines in $\C\P^3$}\}$ is termed {\em fundamental} if any neighbouring lines $\lsf$ and $\lsf_l$ intersect and the points of intersection $\psf^l$ enjoy the {\em coplanarity property}, that is, for any distinct $l,m$ and $p$, the quadrilaterals $[\psf^l,\psf^l_m,\psf^l_{mp},\psf^l_p]$ are planar. Equivalently, the lines passing through the points $\psf^l$ and $\psf^l_m$ admit the {\em concurrency property}, that is, the four lines connecting the points $\psf^l,\psf^p,\psf^l_p,\psf^p_l$ and their shifted counterparts $\psf^l_m,\psf^p_m,\psf^l_{pm},\psf^p_{lm}$ are concurrent.
\end{definition}

\subsection{The Pl\"ucker quadric}

The classical Pl\"ucker correspondence \cite{OnishchikSulanke06,Plucker65} may be used to identify a line $\lsf$ in $\C\P^3$ with a point in the four-dimensional Pl\"ucker quadric $Q^4$ embedded in a five-dimensional complex projective space $\C\P^5$. Homogeneous coordinates 
\bela{E4}
  \Vsf = (\gamma^{01},\gamma^{23},\gamma^{02},\gamma^{13},\gamma^{03},\gamma^{12})\in\C^6
\ela
of this point are constructed by considering homogeneous coordinates
\bela{E5}
  \asf = \left(\bear{c}\alpha^0\\ \alpha^1\\ \alpha^2\\ \alpha^3\ear\right),\quad 
  \bsf = \left(\bear{c}\beta^0\\ \beta^1\\ \beta^2\\ \beta^3\ear\right)
\ela
of two distinct points $\asf$ and $\bsf$ on the line $\lsf$ and defining the Pl\"ucker coordinates of the line by
\bela{E6}
  \gamma^{\mu\nu} = \det\left(\bear{cc} \alpha^\mu & \beta^\mu\\ \alpha^\nu & \beta^\nu\ear\right).
\ela
The Pl\"ucker quadric $Q^4$ is encapsulated in the Pl\"ucker identity
\bela{E7}
  \gamma^{01}\gamma^{23} - \gamma^{02}\gamma^{13} + \gamma^{03}\gamma^{12} = 0.
\ela
It is convenient to formulate the latter as
\bela{E8}
  \langle \Vsf,\Vsf\rangle = 0,
\ela
where the inner product is taken with respect to the block-diagonal metric
\bela{E9}
  \diag\left[\left(\bear{cc}0&1\\1&0\ear\right),-\left(\bear{cc}0&1\\1&0\ear\right),\left(\bear{cc}0&1\\1&0\ear\right)\right].
\ela
Moreover, the condition of intersecting neighbouring lines $\lsf$ and $\lsf_l$ translates into
\bela{E10}
  \langle \Vsf,\Vsf_l\rangle = 0.
\ela

\subsection{The fundamental \boldmath $M$-system}

In \cite{BobenkoSchief15}, it has been demonstrated that fundamental line complexes are algebraically governed by a particular case of a fundamental discrete integrable system. In the current context, this {\em $M$-system} determines the ``evolution'' of a matrix 
\bela{E11}
  \calM = \left(\bear{ccccc} M^{11}&M^{12}&M^{13}&M^{14}&M^{15}\\
                            M^{21}&M^{22}&M^{23}&M^{24}&M^{25}\\
                            M^{31}&M^{32}&M^{33}&M^{34}&M^{35}\\
                            M^{41}&M^{42}&M^{43}&M^{44}&M^{45}\\
                            M^{51}&M^{52}&M^{53}&M^{54}&M^{55}
  \ear\right)
\ela
according to
\bela{E12}
  M^{ik}_l = M^{ik} - \frac{M^{il}M^{lk}}{M^{ll}},\quad l\in \{1,2,3\}\backslash\{i,k\}.
\ela
The submatrix 
\bela{E13}
  \hat{\calM} = \left(\bear{cc} M^{44}&M^{45}\\ M^{54}&M^{55}\ear\right)
\ela
parametrises the (generic) lines of the fundamental line complex via
\bela{E14}
  \Vsf = \left(1,\left|\bear{cc}M^{44}&M^{45}\\ M^{54}&M^{55}\ear\right|,M^{44},M^{55},M^{54},M^{45}\right).
\ela
It is easy to verify that $\langle \Vsf,\Vsf\rangle = 0$ identically and $\langle \Vsf,\Vsf_l\rangle=0$ is a consequence of the $M$-system (\ref{E12}). Furthermore, the coplanarity property may also be established \cite{BobenkoSchief15}. Hence, the minors of the matrix $\hat{\calM}$ constitute Pl\"ucker coordinates of the lines of a fundamental line complex. It is important to emphasise that all (generic) fundamental line complexes may be generated in this manner.

\subsection{Correlations}

In \cite{BobenkoSchief15}, it has been proven that fundamental line complexes may be characterised in terms of {\em correlations}. A correlation of a three-dimensional projective space is an incidence-preserving transformation which maps $k$-dimensional projective subspaces to $2-k$-dimensional projective subspaces \cite{OnishchikSulanke06,SempleKneebone52}. Thus, points, lines and planes are mapped to planes, lines and points respectively in such a manner that, for instance, the points of a line are mapped to planes which meet in a line. In terms of homogeneous coordinates, any correlation $\kappa$ is encoded in a matrix $\Bsf$ such that the image of a point $\xsf\in\C^4$ is given by 
\bela{E15}
  \kappa(\xsf) = \{\ysf\in\C^4 : \ysf^T\Bsf\xsf = 0\}.
\ela
It is known (see, e.g., \cite{Carver05}) that for any hexagon in $\C\P^3$ in general position there exists a unique correlation $\kappa$ which maps any edge (regarded as a line) to its ``opposite'' counterpart. The correlation $\kappa$ constitutes a polarity associated with a quadric defined by a symmetric matrix $\Bsf=\Bsf^T$. Specifically, if the vertices of the hexagon are denoted by $\xsf^i$ as indicated in Figure \ref{hexagon} and the planes spanned by any three successive vertices $\xsf^{i-1},\xsf^i,\xsf^{i+1}$ are labelled by $\pi^i$ then, in terms of homogeneous coordinates, there exists a unique symmetric matrix $\Bsf$ such that 
\bela{E16a}
  \kappa(\xsf^i) = \pi^{i+3},\quad i=1,\ldots,6
\ela
with indices taken modulo 6.
\begin{figure}
\centerline{\includegraphics[scale=0.5]{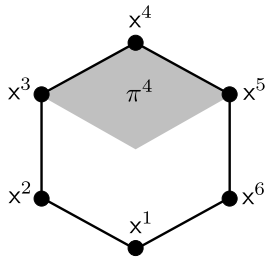}}
\caption{A hexagon in $\C\P^3$ with vertices $\xsf^i$ and associated planes $\pi^i$.}
\label{hexagon}
\end{figure}

We now consider an elementary cube of a line complex and assume that the lines $\lsf^{ii+1}$ as depicted in Figure \ref{correlation} form a hexagon, where $\lsf^{ii+1}$ passes through the vertices $\xsf^i$ and $\xsf^{ii+1}$.
\begin{figure}
\centerline{\includegraphics[scale=0.5]{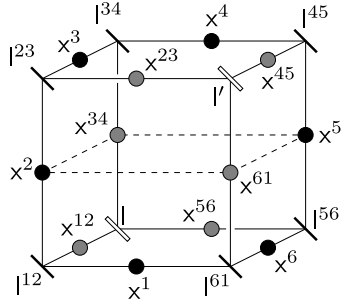}}
\caption{The hexagon with vertices $\xsf^i$ uniquely determines a correlation $\kappa$ which interchanges opposite lines $\lsf^{ii+1}$ and $\lsf^{i+3i+4}$. The correlation $\kappa$ then maps the given line $\lsf$ to a line $\lsf'$. The vertices $\xsf^i$ and the points of intersection $\xsf^{ii+1}$ enjoy the planarity property, that is, for instance, the points $\xsf^2,\xsf^{34},\xsf^5$ and $\xsf^{61}$ are coplanar.}
\label{correlation}
\end{figure}
There exists a one-parameter family of lines $\lsf$ which intersect the lines $\lsf^{12}$, $\lsf^{34}$ and $\lsf^{56}$. The correlation $\kappa$ maps any fixed line $\lsf$ to a unique line $\lsf'$ which intersects the lines $\lsf^{23}$, $\lsf^{45}$ and $\lsf^{61}$. Remarkably, the eight lines $\lsf,\lsf^{12},\lsf^{23},\lsf^{34},\lsf^{45},\lsf^{56},\lsf^{61}$ and $\lsf'$ turn out to form an elementary cube of a fundamental line complex, that is, for instance, the points of intersection $\xsf^2,\xsf^{34},\xsf^5$ and $\xsf^{61}$ may be shown to be coplanar \cite{BobenkoSchief15}. Since any elementary cube of a fundamental line complex is determined by seven lines via the planarity property, the existence of a correlation which maps the eight lines of an elementary cube of a fundamental line complex to their opposite counterparts is an equivalent characterisation of fundamental line complexes. It is observed that, in general, each elementary cube comes with a different correlation.

\section{Canonical reductions of the \boldmath $M$-system. Associated subgeometries}

In this section, we identify subgeometries of complex Pl\"ucker line geometry in which the definition of fundamental line complexes remains meaningful by considering admissible reductions of the $M$-system (\ref{E12}). 

\subsection{Real Pl\"ucker line geometry}

It is evident that the reality constraint
\bela{E17a}
  \bar{\mathcal{M}} = \mathcal{M}
\ela
is compatible with the $M$-system (\ref{E12}). Hence, the latter gives rise to $\Z^3$ lattices of points in a {\em real} Pl\"ucker quadric $Q^4$ with homogeneous coordinates
\bela{E16}
  \Vsf:\Z^3\rightarrow \R^{3,3},
\ela
where the space of homogeneous coordinates of signature $(3,3)$ is equipped with the metric (\ref{E9}). This corresponds, in turn, to fundamental line complexes
\bela{E17}
  \lsf : \Z^3 \rightarrow \{\mbox{lines in $\R\P^3$}\}
\ela
in real Pl\"ucker line geometry. In this case, the analysis undertaken in \cite{BobenkoSchief15} is still valid but it is seen below that the consideration of complex line complexes is essential in the classification of fundamental line complexes. 

\subsection{Lie circle geometry}

The $M$-system is invariant under exchanging left and right upper indices and it is therefore admissible to demand that the matrix $\mathcal{M}$ be symmetric, that is,
\bela{E18}
  \mathcal{M}^T = \mathcal{M}.
\ela
In particular, this implies that $M^{54} = M^{45}$ so that the admissible points in the Pl\"ucker quadric are contained in the hyperplane $\gamma^{03} = \gamma^{12}$. The intersection of the Pl\"ucker quadric with this hyperplane results in a three-dimensional quadric $Q^3$ so that the corresponding lines in $\C\P^3$ are no longer in general position but form a three-parameter family of lines $\mathcal{L}$ termed {\em linear line complex} due to the linear relation between the Pl\"ucker coordinates $\gamma^{ik}$ \cite{SempleKneebone52}. Accordingly, the symmetry constraint (\ref{E18}) results in fundamental line complexes 
\bela{E19}
  \lsf : \Z^3 \rightarrow \mathcal{L},
\ela
that is, fundamental line complexes which are such that each line belongs to the linear line complex $\mathcal{L}$.

In terms of homogeneous coordinates, the points in the quadric $Q^3$ are parametrised by
\bela{E20}
  \Vsf = \left(1,\left|\bear{cc}M^{44}&M^{45}\\ M^{45}&M^{55}\ear\right|,M^{44},M^{55},M^{45},M^{45}\right)
\ela
so that it is natural to introduce the reduced map
\bela{E21}
   \hat{\Vsf} :  \Z^3 \rightarrow \C^5
\ela
defined by 
\bela{E20a}
  \hat{\Vsf} = \left(1,\left|\bear{cc}M^{44}&M^{45}\\ M^{45}&M^{55}\ear\right|,M^{44},M^{55},M^{45}\right),
\ela
where the induced metric of the space of homogeneous coordinates $\C^5$ is given by
\bela{E21a}
  \diag\left[\left(\bear{cc}0&1\\1&0\ear\right),-\left(\bear{cc}0&1\\1&0\ear\right),2\right].
\ela
Thus, the quadric $Q^3$ is represented by $\langle\hat{\Vsf},\hat{\Vsf}\rangle=0$ and the fact that neighbouring lines $\lsf$ and $\lsf_l$ of a fundamental line complex intersect translates into $\langle\hat{\Vsf}_l,\hat{\Vsf}\rangle$=0. Accordingly, if the Pl\"ucker quadric is real then the quadric $Q^3$ has signature $(3,2)$ and may be identified with a Lie quadric, the points of which represent oriented circles $\csf$ in the real plane $\R^2$ \cite{Blaschke29}. Intersecting lines $\lsf$ and $\lsf_l$ of the linear line complex correspond to circles $\csf$ and $\csf_l$ which have oriented contact. Hence, the symmetric real $M$-system (\ref{E12}) gives rise to {\em fundamental circle complexes}
\bela{E22a}
  \csf : \Z^3 \rightarrow \{\mbox{oriented circles in $\R^2$}\}.
\ela
By construction, the two circles combinatorially attached to any neighbouring vertices of the $\Z^3$ lattice are necessarily in oriented contact. In Section 4, we determine the geometric implication of the planarity property which completely characterises fundamental circle complexes.

\subsection{Lie sphere geometry}

Fundamental complexes in Lie sphere geometry are obtained by considering Hermitian matrices
\bela{E23a}
  \mathcal{M}^\dagger = \mathcal{M}.
\ela
Indeed, the latter admissible constraint implies that $M^{54}=\bar{M}^{45}$ so that the relevant points in the Pl\"ucker quadric are parametrised by
\bela{E24a}
  \Vsf = \left(1,\left|\bear{cc}M^{44}&M^{45}\\ \bar{M}^{45}&M^{55}\ear\right|,M^{44},M^{55},\bar{M}^{45},M^{45}\right).
\ela
Real homogeneous coordinates are obtained by defining the map
\bela{E25a}
  \hat{\Vsf} : \Z^3 \rightarrow \R^{4,2},
\ela
where
\bela{E26a}
  \hat{\Vsf} = \left(1,\left|\bear{cc}M^{44}&M^{45}\\ \bar{M}^{45}&M^{55}\ear\right|,M^{44},M^{55},\Re( M^{45}),\Im(M^{45})\right)
\ela
and the induced metric of the space of homogeneous coordinates is given by
\bela{E21b}
  \diag\left[\left(\bear{cc}0&1\\1&0\ear\right),-\left(\bear{cc}0&1\\1&0\ear\right),2,2\right].
\ela
The property $\langle\hat{\Vsf},\hat{\Vsf}\rangle = 0$ is equivalent to $\langle\Vsf,\Vsf\rangle=0$ and encodes a real quadric $\hat{Q}^4$ of signature $(4,2)$. Hence, the lines $\lsf$ corresponding to the points (\ref{E24a}) in the Pl\"ucker quadric may be interpreted as oriented spheres $\ssf$ in a three-dimensional real Euclidean space $\R^3$ \cite{Blaschke29}. Once again, intersecting neighbouring lines $\lsf$ and $\lsf_l$ are characterised by $\langle\hat{\Vsf}_l,\hat{\Vsf}\rangle = 0$ which, in turn, means that the associated spheres $\ssf$ and $\ssf_l$ have oriented contact. Accordingly, the Hermitian $M$-system (\ref{E12}) gives rise to {\em fundamental sphere complexes}
\bela{E22}
  \ssf : \Z^3 \rightarrow \{\mbox{oriented spheres in $\R^3$}\}.
\ela
By construction, the two spheres combinatorially attached to any neighbouring vertices of the $\Z^3$ lattice are necessarily in oriented contact. The precise geometric implication of the planarity property which completely characterises fundamental sphere complexes is consigned to a forthcoming publication. Here, we confine ourselves to the analysis of the geometric and algebraic properties of the fundamental complexes associated with the symmetric $M$-system and its geometric reductions in M\"obius, Laguerre and hyperbolic geometry. 

The content of this section is summarised in Figure \ref{summary}.
\begin{figure}
\centerline{\includegraphics[width=0.7\textwidth]{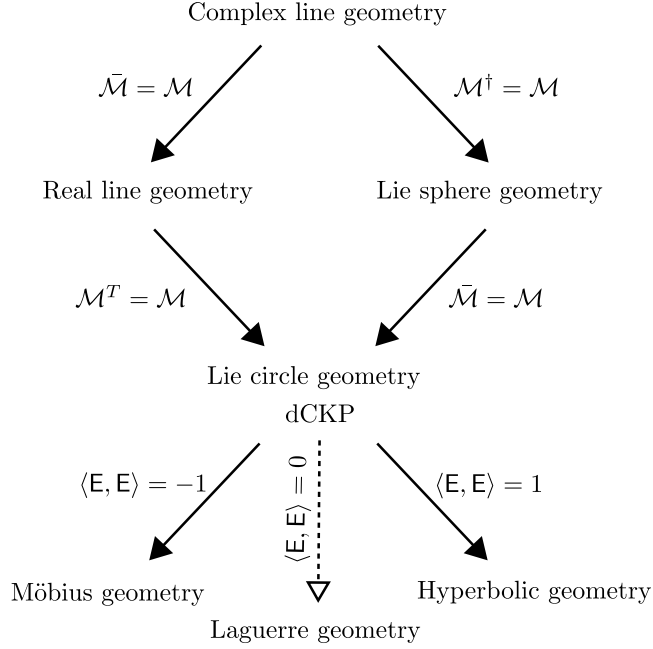}}
\caption{Subgeometries of complex line geometry. Fundamental complexes in real line, Lie sphere and Lie circle geometries are constructed by means of canonical reductions of the $M$-system. If $\calM$ is real and symmetric then the fundamental circle complexes are related to the integrable dCKP equation. Subgeometries of Lie circle geometry are obtained by intersecting the Lie quadric $Q^3$ with the polar plane of a point $\mathsf{E}$ with respect to $Q^3$. The cases $\langle\mathsf{E},\mathsf{E}\rangle=\pm1$ lead to fundamental complexes in M\"obius and hyperbolic geometries governed by particular reductions of the dCKP equation and its B\"acklund transforms. Anti-fundamental complexes in Laguerre geometry are associated with $\langle\mathsf{E},\mathsf{E}\rangle=0$.}
\label{summary}
\end{figure}
It is noted that the diagram closes in the sense that fundamental circle complexes are retrieved from fundamental sphere complexes by imposing the condition that $\mathcal{M}$ be real.

\section{Fundamental circle complexes}

In the preceding, it has been demonstrated that the symmetric $M$-system gives rise to fundamental line complexes which are constructed from lines belonging to the linear line complex $\mathcal{L}$ defined by $\gamma^{03}=\gamma^{12}$. If the lines are real then one may employ the classical equivalence of lines in a linear line complex (such as $\mathcal{L}$) and oriented circles so that any point in the Lie quadric $Q^3$ represented by $\Vsf$ may be interpreted as either a circle $\csf$ in the plane or a line $\lsf$ in a linear line complex. The connection between points in the Lie quadric and circles in the plane is made explicit in Section 6. Accordingly, in the current context, we may interpret fundamental line complexes as oriented circle complexes and it is therefore natural to adopt the following definition.

\begin{definition}
  A circle complex $\csf : \Z^3 \rightarrow \{\mbox{oriented circles in $\R^2$}\}$ is termed {\em fundamental} if the circles interpreted as lines in a linear line complex $\mathcal{L}$ form a fundamental line complex $\lsf : \Z^3 \rightarrow \{\mbox{lines in $\R\P^3$}\}$.
\end{definition}

By construction, the two circles of a fundamental circle complex attached to the vertices of any edge of the $\Z^3$ lattice touch in an oriented manner. However, the condition of oriented contact does not entirely characterise fundamental circle complexes. An algebraic characterisation of fundamental circle complexes is contained in the following theorem.

\begin{theorem}
  Fundamental circle complexes are governed by the real symmetric $M$-system (\ref{E12}), that is, $\bar{\mathcal{M}} = \mathcal{M}$ and $\mathcal{M}^T = \mathcal{M}$.
\end{theorem}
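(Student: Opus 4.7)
The plan is to combine the classification of generic fundamental line complexes via the $M$-system (recalled in Section 2.3) with the geometric content of the two reductions introduced in Section 3. The preliminary step is to check that both $\bar{\mathcal{M}} = \mathcal{M}$ and $\mathcal{M}^T = \mathcal{M}$ are admissible reductions of (\ref{E12}). Reality is manifest, while for symmetry a one-line calculation
\begin{equation*}
M^{ki}_l \;=\; M^{ki} - \frac{M^{kl}M^{li}}{M^{ll}} \;=\; M^{ik} - \frac{M^{il}M^{lk}}{M^{ll}} \;=\; M^{ik}_l
\end{equation*}
(valid for $l\notin\{i,k\}$, and obvious for the diagonal entries) shows that $\mathcal{M}_l^T = \mathcal{M}_l$ whenever $\mathcal{M}^T = \mathcal{M}$. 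Hence both constraints propagate under the evolution and may be imposed consistently on the Cauchy data.

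For the forward direction, assume $\mathcal{M}$ is real and symmetric. Then $M^{54} = M^{45}$ and all entries of $\hat{\mathcal{M}}$ are real, so the parametrisation (\ref{E14}) yields real Pl\"ucker coordinates $\Vsf$ satisfying $\gamma^{03} = \gamma^{12}$. By the discussion in Section 3.2, the associated line $\lsf$ belongs to the real linear line complex $\mathcal{L}$ and therefore corresponds to an oriented circle $\csf$ in $\R^2$ via the Lie quadric identification. Since the $M$-system continues to hold, the lines $\lsf$ form a fundamental line complex in $\R\P^3$, which is precisely what it means, according to the definition above, for $\csf$ to be a fundamental circle complex.

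For the converse, let $\csf$ be a fundamental circle complex and let $\lsf:\Z^3\to\mathcal{L}\subset\R\P^3$ denote the associated real line complex. By the completeness statement recalled at the end of Section 2.3, $\lsf$ is parametrised by some matrix $\mathcal{M}$ via (\ref{E14}). The inclusion $\lsf\subset\mathcal{L}$ forces $\gamma^{03}=\gamma^{12}$, hence $M^{45}=M^{54}$, while reality of the lines forces $\hat{\mathcal{M}}$ to be real. The principal obstacle is to promote these conditions from the $2\times 2$ block $\hat{\mathcal{M}}$ to the full matrix $\mathcal{M}$, since (\ref{E12}) drives the evolution of $\hat{\mathcal{M}}$ through the remaining entries as well. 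I would resolve this by exploiting the residual freedom in the reconstruction of $\mathcal{M}$ from the geometric data (the entries outside $\hat{\mathcal{M}}$ enter only as auxiliary evolution data for the Pl\"ucker coordinates) to arrange that $\mathcal{M}$ is simultaneously real and symmetric; the invariance of both conditions under (\ref{E12}) established in the first step then guarantees that this choice is globally consistent across the $\Z^3$ lattice.
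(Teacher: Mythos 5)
Your overall architecture matches the paper's: the forward direction is indeed immediate from Section 3, and the whole substance of the theorem lies in the converse, where the geometrically given condition $M^{54}=M^{45}$ on the $2\times2$ block $\hat{\calM}$ must be promoted to symmetry of the full matrix $\calM$. You have correctly located this as the principal obstacle, and your admissibility computation for the symmetric reduction is correct. However, the converse as written has a genuine gap: the sentence beginning ``I would resolve this by exploiting the residual freedom\dots'' is a statement of intent rather than an argument. It is not evident a priori that the entries of $\calM$ outside $\hat{\calM}$ --- which are constrained at every lattice site by the requirement that they drive the evolution of $\hat{\calM}$ so as to reproduce the given circle complex --- leave enough freedom to be chosen symmetric; establishing exactly this is the content of the proof, and appealing to the invariance of the symmetric reduction under (\ref{E12}) does not substitute for it.

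The paper closes this gap with a concrete computation that you would need to supply in some form. Writing the evolution of the block as $\hat{\calM}_l = \hat{\calM} - \calV^l(\calW^l)^T/M^{ll}$ with $\calV^l=(M^{4l},M^{5l})^T$ and $\calW^l=(M^{l4},M^{l5})^T$, the preservation of the symmetry of $\hat{\calM}$ forces the rank-one update $\calV^l(\calW^l)^T$ to be symmetric, hence $\calV^l=s^l\calW^l$ for scalar functions $s^l$. Propagating this relation through the $M$-system yields $s^l_m=s^l$ (so that $s^l$ depends only on $n_l$) together with $s^lM^{lm}=s^mM^{ml}$, and the scaling invariance $s^l(\calW^l,M^{lm})\rightarrow(\calW^l,M^{lm})$ of the $M$-system then permits the normalisation $s^l=1$, giving $\calV^l=\calW^l$ and $M^{lm}=M^{ml}$, that is, $\calM^T=\calM$ up to gauge. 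The ``residual freedom'' you invoke is thus precisely this lattice-direction-dependent rescaling, but identifying it and verifying that it absorbs the entire discrepancy is the non-trivial step; without it your converse is incomplete.
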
 

\begin{proof}
In the preceding section, it has been demonstrated that the real symmetric $M$-system gives rise to fundamental circle complexes. Conversely, by definition, fundamental circle complexes are encoded in the real $M$-system subject to $M^{54}=M^{45}$. The latter constraint is now shown to imply that, up to a gauge transformation, $M^{ik} = M^{ki}$ for all $i,k\in\{1,2,3,4,5\}$ and, hence, $\mathcal{M}^T = \mathcal{M}$. Thus, we begin by reformulating the $M$-system as
\bela{E22aa}
 \bear{c}\dis
  \hat{\mathcal{M}}_l = \hat{\mathcal{M}} - \frac{\calV^l{(\calW^l)}^T}{M^{ll}},\quad 
  \calV^l = \left(\bear{c}M^{4l}\\M^{5l}\ear\right),\quad \calW^l = \left(\bear{c}M^{l4}\\M^{l5}\ear\right)\AS\dis
  \calV^m _l= \calV^m - \calV^l\frac{M^{lm}}{M^{ll}},\quad \calW^m_l = \calW^m - \frac{M^{ml}}{M^{ll}}\calW^l\AS\dis
  M^{mn}_l = M^{mn} - \frac{M^{ml}M^{ln}}{M^{ll}},
 \ear
\ela
where all indices are in $\{1,2,3\}$ and $l\not\in \{m,n\}$. The symmetry of $\hat{M}$ implies that
\bela{E22b}
  \calV^l{(\calW^l)}^T = \calW^l{(\calV^l)}^T
\ela
which can only be valid if
\bela{E22c}
  \calV^l = s^l \calW^l
\ela
for some scalar functions $s^l$. The evolution of the above relation gives rise to
\bela{E22d}
  s^l\calW^l - s^m\calW^m\frac{M^{ml}}{M^{mm}} = s^l_m\left(\calW^l - \frac{M^{lm}}{M^{mm}}\calW^m\right)
\ela
and, hence, we arrive at
\bela{E22e}
  s^l_m = s^l,\quad s^l M^{lm} = s^mM^{ml}.
\ela
In particular, we conclude that $s^l = s^l(n_l)$. Since the $M$-system in the form (\ref{E22a}) is readily seen to be invariant under $s^l(\calW^l,M^{lm})\rightarrow (\calW^l,M^{lm})$, we may set $s^l=1$ without loss of generality so that $\calV^l=\calW^l$ and $M^{lm}=M^{ml}$, leading to $\mathcal{M}^T=\mathcal{M}$.
\end{proof}

\subsection{The discrete CKP equation}

Before we embark on establishing a purely Lie geometric characterisation of fundamental circle complexes, we discuss in more detail the algebraic implications of the above theorem. Thus, it is well known (cf.\ \cite{BobenkoSchief15}) that the general $M$-system (\ref{E12}) encapsulates a $\tau$-function which is defined by the compatible system
\bela{E23}
  \tau_i = M^{ii}\tau.
\ela
Iteration of this linear relation leads to
\bela{E24}
 \tau_{ik} = \left|\bear{cc}M^{ii} & M^{ik}\\ M^{ki} & M^{kk}\ear\right|\tau,\quad
 \tau_{ikl} = \left|\bear{ccc} M^{ii} & M^{ik} & M^{il}\\ M^{ki} & M^{kk} & M^{kl}\\ M^{li} & M^{lk} & M^{ll}\ear\right|\tau
\ela
for distinct indices $i,k,l$. In the symmetric case $M^{ik}=M^{ki}$, we may therefore solve for $M^{ii}$ and $M^{ik}$ to obtain
\bela{E25}
  M^{ii} = \frac{\tau_i}{\tau},\quad M^{ik} = \pm\sqrt{\frac{\tau_i\tau_k - \tau\tau_{ik}}{\tau^2}}.
\ela
``Taking the square'' of the remaining relation (\ref{E24})$_3$ then leads to the discrete CKP (dCKP) equation
\bela{E26}
  (\tau\tau_{123} + \tau_1\tau_{23} - \tau_2\tau_{13} - \tau_3\tau_{12})^2 - 4(\tau_{12}\tau_{13} -   \tau_1\tau_{123})(\tau_2\tau_3 - \tau\tau_{23}) = 0
\ela
which, by construction, is (implicitly) symmetric in the indices. In fact, the left-hand side of this equation is nothing but Cayley's $2\times2\times2$ hyperdeterminant \cite{GelfandKapranovZelevinsky94}. The dCKP equation constitutes one of three discrete ``master equations'' in the theory of integrable systems \cite{Schief03b} and has also been obtained by Kashaev \cite{Kashaev96} in the context of star triangle moves in the Ising model. It also appears as a reduction of the hexahedron recurrence proposed by Kenyon and Pemantle \cite{KenyonPemantle13} in connection with cluster algebras and dimer configurations.  Furthermore, the dCKP equation interpreted as a local relation between the principal minors of a symmetric matrix $\calM$ has been derived as a characteristic property by Holtz and Sturmfels \cite{HoltzSturmfels07} in connection with the `principal minor assignment problem'. This remarkable interpretation naturally places this equation on a multi-dimensional lattice and implies its multi-dimensional consistency \cite{TsarevWolf09}. We also observe in passing that, for the algebraic connection between the symmetric $M$-system and the dCKP equation to hold, $M$ does not have to be real and, hence, fundamental line complexes composed of lines of a complex linear line complex, which are encoded in the reduction (\ref{E18}), are governed by the complex dCKP equation. 

If we now introduce quantities $\tau_4,\tau_5$ and $\tau_{45}$ according to
\bela{E27}
  \tau_4 = M^{44}\tau,\quad \tau_5 = M^{55}\tau,\quad \tau_{45} =  \left|\bear{cc}M^{44} & M^{45}\\ M^{45} & M^{55}\ear\right|\tau
\ela
then it may be directly shown that these three quantities constitute another three solutions of the dCKP equation. In fact, this is a consequence of the fact that the general $M$-system may be consistently placed on lattices of arbitrary dimension \cite{BobenkoSchief15} so that one may interpret the indices 4 and 5 on $\tau$ as shifts in two additional lattice directions labelled by $n_4$ and $n_5$. In other words, $\tau_4$ and $\tau_5$ are so-called B\"acklund transforms \cite{BobenkoSuris09,RogersSchief02} of $\tau$ with $\tau_{45}$ being a B\"acklund transform of both $\tau_4$ and $\tau_5$. Thus, we have established the following theorem.

\begin{theorem}
Fundamental circle complexes are parametrised by solutions $\tau$ of the dCKP equation (\ref{E26}) and their first and second generation B\"acklund transforms $\tau_4,\tau_5$ and $\tau_{45}$ respectively defined by (\ref{E27}).
\end{theorem}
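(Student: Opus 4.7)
The strategy is to exploit the multi-dimensional consistency of the $M$-system, already invoked in the preceding discussion. The plan is to extend the lattice from $\Z^3$ to $\Z^5$ by interpreting the row/column indices $4$ and $5$ of the $5\times5$ matrix $\calM$ as two additional lattice directions $n_4, n_5$, with the $M$-system (\ref{E12}) and its symmetric reduction extended accordingly to all $l \in \{1,\ldots,5\}$. On this extended lattice, one defines a $\tau$-function via $\tau_I = M^{II}\tau$ for all $I \in \{1,\ldots,5\}$. Iteration of this compatible linear relation, exactly as in (\ref{E24}), expresses $\tau_{IJ}$ and $\tau_{IJK}$ as principal minors of $\calM$ times $\tau$. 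In particular, specialising $I,J$ to $4,5$ reproduces precisely the definitions (\ref{E27}) of $\tau_4, \tau_5$ and $\tau_{45}$, identifying them as the values of the extended $\tau$-function obtained from the current one by unit shifts in the directions $n_4$, $n_5$ and $n_4+n_5$ respectively.

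The next step is to observe that the derivation of the dCKP equation (\ref{E26}) from the symmetric $M$-system uses only the structure of the $3\times3$ principal minors indexed by a triple $\{i,k,l\}$ together with Cayley's hyperdeterminant identity; it is insensitive to which three-dimensional coordinate sublattice of $\Z^5$ is selected. Applying this derivation to the slice $n_4=n_5=0$ recovers the dCKP equation for $\tau$ itself. Applying it instead to the slices $(n_4,n_5)=(1,0)$, $(0,1)$ and $(1,1)$ yields, by the very same computation, the dCKP equation for $\tau_4$, $\tau_5$ and $\tau_{45}$ respectively. The Bäcklund transform interpretation is then automatic: shifting in the auxiliary directions $n_4,n_5$ is a $\tau$-function-level symmetry that maps solutions of dCKP to solutions of dCKP, with $\tau_{45}$ arising equally as the $n_5$-shift of $\tau_4$ or the $n_4$-shift of $\tau_5$. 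Finally, the submatrix $\hat{\calM}$ parametrising the circles via (\ref{E20a}) is recovered from the four $\tau$-functions through $M^{44}=\tau_4/\tau$, $M^{55}=\tau_5/\tau$ and $(M^{45})^2 = (\tau_4\tau_5-\tau\tau_{45})/\tau^2$, with the sign ambiguity in $M^{45}$ reflecting the two-fold Apollonius ambiguity discussed earlier.

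The main obstacle, or rather the point requiring the most care, is the legitimacy of the $\Z^5$ extension under the real symmetric reduction: one must check that imposing $\bar\calM=\calM$ and $\calM^T=\calM$ on $\Z^3$ propagates consistently to $\Z^5$ when the evolutions (\ref{E12}) are extended to $l\in\{4,5\}$. This, however, follows from the multi-dimensional consistency of the unreduced $M$-system established in \cite{BobenkoSchief15} together with the fact that both reality and the symmetry argument given in the proof of the previous theorem go through verbatim for evolutions in the directions $n_4$ and $n_5$. Once this is in place, the identification of $\tau_4,\tau_5,\tau_{45}$ with shifted $\tau$-functions makes the result essentially automatic.
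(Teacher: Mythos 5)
Your proposal is correct and follows essentially the same route as the paper: the paper likewise derives the dCKP equation by expressing principal minors of the symmetric $\calM$ through the $\tau$-function, and then argues that $\tau_4,\tau_5,\tau_{45}$ are B\"acklund transforms (hence solutions of dCKP) precisely by invoking the multi-dimensional consistency of the $M$-system and interpreting the indices $4,5$ as shifts in additional lattice directions, with the parametrisation $\hat{\Vsf}=(\tau,\tau_{45},\tau_4,\tau_5,\pm\sqrt{\tau_4\tau_5-\tau\tau_{45}})$ and its sign ambiguity noted exactly as you describe. Your additional check that the real symmetric reduction propagates to the extended lattice is a point the paper leaves implicit but is handled by the same consistency argument.
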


\begin{remark}
If we rescale the homogeneous coordinates (\ref{E26}) of the points $\hat{\Vsf}$ in the Lie quadric $Q^3$ then we obtain the explicit parametrisation
\bela{E28}
  \hat{\Vsf} = (\tau,\tau_{45},\tau_4,\tau_5,\pm\sqrt{\tau_4\tau_5 - \tau\tau_{45}})
\ela
of the fundamental circle complexes in terms of the four B\"acklund-related solutions $\tau,\tau_4,\tau_5$ and $\tau_{45}$ of the dCKP equation. However, it is evident that the dCKP equation is not a well-defined evolution equation in the sense that there exist two choices of $\tau_{123}$ for any given $\tau,\tau_1,\tau_2,\tau_3,\tau_{12},\tau_{13},\tau_{23}$. This ambiguity is reflected in the choice of sign in the parametrisation (\ref{E28}) and seen to be key in the geometric definition of fundamental circle complexes. It is important to note that, at the level of the $M$-system, this problem does not occur since the $M$-system constitutes a well-defined system of evolution equations. Accordingly, the $M$-system encapsulates a canonical choice of the root $\tau_{123}$. 
\end{remark}

\subsection{Correlated circles}

As recalled in Section 2, seven lines of an elementary cube of a fundamental line complex uniquely determine the eighth line via the planarity property or, equivalently, the correlation associated with the six lines forming a hexagon. Accordingly, a fundamental line complex is uniquely obtained by prescribing lines on the three coordinate planes $n_i=0$ such that neighbouring lines intersect. It is evident that the same Cauchy problem is valid in the context of  fundamental circle complexes if this map from seven lines to the eighth line acts within the linear line complex $\mathcal{L}$. The latter turns out to be the case and, in geometric terms, this means that if we begin with a ``hexagon'' of oriented circles $(\csf_1,\csf_{12},\csf_{2},\csf_{23},\csf_3,\csf_{13})$ which touch each other cyclically (cf.\  Figure \ref{pairs_of_circles}) then there exists a canonical correlation between the two pairs of oriented circles $\csf$ and $\csf_{123}$ touching the triples of circles $\csf_1,\csf_2,\csf_3$ and $\csf_{12},\csf_{23},\csf_{13}$ respectively.
\begin{figure}
  \centerline{\includegraphics[scale=0.25]{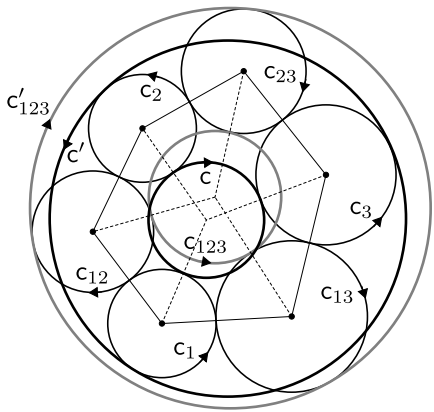}}
  \caption{Apollonius problems associated with a hexagon of touching circles. The pairs of circles $(\csf,\csf')$ and $(\csf_{123},\csf_{123}')$ are in oriented contact with the triple of circles $\csf_1,\csf_2,\csf_3$ and $\csf_{12},\csf_{23},\csf_{13}$ respectively.}
\label{pairs_of_circles}
\end{figure}
Here, we make the assumption that the position of the two triples of circles is such that the two corresponding Apollonius problems are solvable. It is recalled that the classical Apollonius problem  is concerned with the determination of all circles which touch any given triple of distinct circles \cite{Coolidge16}. It turns out that the solvability of one Apollonius problem implies the solvability of the other. Hence, if the Cauchy data on the coordinate planes are chosen in such a manner that, wherever applicable, the Apollonius problems are solvable then the solvability of the Apollonius problem propagates along the lattice. The above assertions are a consequence of the following key theorem.

\begin{theorem}
The correlation associated with a hexagon of six intersecting lines which belong to a linear line complex $\mathcal{L}$ acts within $\mathcal{L}$. In particular, seven oriented circles of an elementary cube of a circle complex which touch each other ``along edges'' are mapped to a unique eighth circle which has oriented contact with the associated triple of circles along the remaining three edges.
\end{theorem}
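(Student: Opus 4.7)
The plan is to lift the correlation $\kappa$ to a projective linear automorphism of the ambient $\C\P^5$ of the Pl\"ucker quadric and then exploit linearity. If $\kappa$ is encoded in the matrix $\Bsf$, its action on a line $\lsf$ through two points $\asf,\bsf$ is $\kappa(\lsf)=\kappa(\asf)\cap\kappa(\bsf)$, the intersection of the two planes with dual coordinates $\Bsf\asf$ and $\Bsf\bsf$. In terms of Pl\"ucker coordinates this is the map
\begin{equation*}
 \asf\wedge\bsf\;\longmapsto\;\star\bigl((\Bsf\asf)\wedge(\Bsf\bsf)\bigr),
\end{equation*}
that is, the composition of the induced map $\bigwedge^2\Bsf:\bigwedge^2\C^4\to\bigwedge^2(\C^4)^*$ with the Hodge isomorphism $\bigwedge^2(\C^4)^*\cong\bigwedge^2\C^4$, and is manifestly linear. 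It therefore defines a projective linear automorphism $\hat\kappa$ of $\C\P^5$ that preserves $Q^4$, since correlations preserve all incidences between points, lines and planes of $\C\P^3$.

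Now the linear line complex $\mathcal{L}$ is represented by the hyperplane section $Q^3=Q^4\cap H$ of the Pl\"ucker quadric, where $H=\{\gamma^{03}=\gamma^{12}\}$ is a hyperplane of $\C\P^5$. By hypothesis the six hexagon lines lie in $\mathcal{L}$, so their six Pl\"ucker images all lie in $H$, and the defining property of $\kappa$ is that $\hat\kappa$ permutes these six points by swapping opposite pairs; in particular, $\hat\kappa$ maps the six points back into $H$. In sufficiently general position the six points span the four-dimensional projective subspace $H\subset\C\P^5$, so the projective linear map $\hat\kappa$ sends a spanning set of $H$ into $H$ and therefore preserves $H$. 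Combined with $\hat\kappa(Q^4)=Q^4$, this yields $\hat\kappa(Q^3)=Q^3$; translating back to $\C\P^3$, $\kappa$ maps lines of $\mathcal{L}$ to lines of $\mathcal{L}$.

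The second assertion is now an immediate consequence of the hexagon--correlation characterisation of fundamental line complexes recalled in Section~2. Indeed, the seven oriented circles of an elementary cube correspond to seven lines of $\mathcal{L}$ which pairwise intersect along the seven prescribed edges of the cube. Six of them form a hexagon which uniquely determines a correlation $\kappa$, and applying $\kappa$ to the remaining seventh line produces the eighth line of the cube. By the first part, this eighth line again lies in $\mathcal{L}$ and hence represents a unique oriented circle having oriented contact with the circles corresponding to the remaining triple of edges.

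The one step that demands care is the generic-position claim that the Pl\"ucker representatives of a hexagon of lines in $\mathcal{L}$ do span $H$. This is where the argument could conceivably break down: I would need to check that neither the combinatorial constraint that consecutive hexagon lines intersect nor the algebraic constraint $\gamma^{03}=\gamma^{12}$ forces an unexpected linear dependency among the six Pl\"ucker points, so that $\hat\kappa(H)\subseteq H$ really does follow from $\hat\kappa$ mapping a basis of $H$ into $H$. Once this generic non-degeneracy is established, the remainder of the argument is elementary linear algebra on $\C\P^5$.
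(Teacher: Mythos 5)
Your argument is sound in outline and takes a genuinely different route from the paper's. You work upstairs in the Pl\"ucker picture: the correlation $\kappa$ induces a projective \emph{linear} automorphism $\hat\kappa$ of $\P(\bigwedge^2\C^4)\cong\C\P^5$ which preserves $Q^4$ (it maps decomposable bivectors to decomposable bivectors), and preservation of $\mathcal{L}$ is reduced to showing that $\hat\kappa$ fixes the hyperplane $H$ cut out by $\gamma^{03}=\gamma^{12}$, which you obtain from the fact that $\hat\kappa$ permutes six points spanning $H$. The paper instead works downstairs in $\C^4$: encoding $\mathcal{L}$ as the null lines of a skew-symmetric $\Lambdasf$ and $\kappa$ by a symmetric $\Bsf$, it extracts from the hexagon incidences the relations $\Lambdasf\xsf^{i+3}\sim\Bsf\xsf^i$, hence $(\Lambdasf^{-1}\Bsf)^2\sim\one$ and $\Bsf\Lambdasf^{-1}\Bsf\sim\Lambdasf$, and then verifies by a two-line computation that the point map $\ysf\mapsto\Lambdasf^{-1}\Bsf\ysf$ sends each line of $\mathcal{L}$ to a line of $\mathcal{L}$ which is precisely its $\kappa$-image. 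Your version is more conceptual (the theorem becomes: a linear automorphism of $\C\P^5$ preserving $Q^4$ and $H$ preserves $Q^4\cap H$); the paper's version yields in addition the explicit projective involution $\Lambdasf^{-1}\Bsf$ interchanging opposite vertices, which is exploited in the Remark following the theorem.

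The step you flag is indeed the only substantive issue, and it does hold for hexagons in general position, so the gap is fillable rather than fatal; but as written your proof is conditional on it, and you should close it. Since having a five-dimensional span is a Zariski-open condition on the variety of hexagons inscribed in $\mathcal{L}$, a single example suffices. Take $\Lambdasf$ in standard symplectic form and $\xsf^1,\dots,\xsf^4$ the standard basis of $\C^4$, ordered so that ${\xsf^i}^T\Lambdasf\,\xsf^{i+1}=0$ for $i=1,2,3$; then $\Vsf^1=\xsf^1\wedge\xsf^2$, $\Vsf^2=\xsf^2\wedge\xsf^3$, $\Vsf^3=\xsf^3\wedge\xsf^4$ are three independent basis bivectors, $\Vsf^4=\xsf^4\wedge\xsf^5$ supplies a fourth independent direction for generic admissible $\xsf^5$, and $\Vsf^5=\xsf^5\wedge\xsf^6$ supplies a fifth, so the six points span the five-dimensional $H$. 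Note also that the paper's proof is not free of genericity either, it merely consumes it elsewhere: to pass from $\xsf^i\sim(\Lambdasf^{-1}\Bsf)^2\xsf^i$ to $(\Lambdasf^{-1}\Bsf)^2\sim\one$ one needs at least five of the six vertices in general position, and one needs $\Lambdasf$ non-degenerate. Both arguments therefore rest on the same standing general-position hypothesis under which the correlation $\kappa$ exists in the first place; yours concentrates it in the spanning claim, which should be stated and verified explicitly rather than left as a caveat.
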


\begin{proof}
The general linear line complex $\mathcal{L}$ is defined by \cite{SempleKneebone52}
\bela{E29}
  \sum_{i\neq k}\lambda^{ik}\gamma^{ik} = 0,
\ela
where $\Lambdasf = (\lambda^{ik})_{i,k}$ is a real anti-symmetric $4\times4$ matrix. In terms of the parametrisation (\ref{E6}) of the Pl\"ucker coordinates $\gamma^{ik}$, this condition may be formulated as \cite{SempleKneebone52}
\bela{E30}
  \asf^T\Lambdasf\bsf = 0.
\ela
Hence, the lines of $\mathcal{L}$ are null lines with respect to the skew-symmetric bilinear form defined by $\Lambdasf$, that is, $\lsf\in\mathcal{L}$ if and only if two distinct points (and therefore any two points) $\asf$ and $\bsf$ on $\lsf$ are orthogonal with respect to $\Lambdasf$. 

We now consider a spatial hexagon $[\xsf^1,\xsf^2,\xsf^3,\xsf^4,\xsf^5,\xsf^6]$ (cf.\ Figure \ref{hexagon}) in general position with associated lines (edges) $\lsf^{i i+1}$ as indicated in Figure \ref{correlation}. On the one hand, the existence of a corresponding correlation $\kappa$ encoded in a symmetric matrix $\Bsf$ implies that
\bela{E31}
 {\xsf^{i+2}}^T\Bsf\xsf^i = 0,\quad {\xsf^{i+3}}^T\Bsf\xsf^i = 0,\quad{\xsf^{i+4}}^T\Bsf\xsf^i = 0. 
\ela
Furthermore, if we assume that the lines $\lsf^{ii+1}$ belong to a linear line complex $\mathcal{L}$ then
\bela{E32}
 {\xsf^{i+2}}^T\Lambdasf\xsf^{i+3} = 0,\quad {\xsf^{i+3}}^T\Lambdasf\xsf^{i+3} = 0,\quad{\xsf^{i+4}}^T\Lambdasf\xsf^{i+3} = 0. 
\ela
Hence, we conclude that
\bela{E33}
  \Lambdasf\xsf^{i+3} \sim \Bsf \xsf^i
\ela
which, in turn, leads to
\bela{E34}
  \xsf^i \sim (\Lambdasf^{-1}\Bsf)^2\xsf^i
\ela
provided that the skew-symmetric bilinear form defined by $\Lambdasf$ is non-degenerate. Since the identity (\ref{E34}) holds for all $\xsf^i$, we have established that
\bela{E35}
  (\Lambdasf^{-1}\Bsf)^2 \sim\one\quad\Rightarrow\quad \Bsf\Lambdasf^{-1}\Bsf \sim \Lambdasf.
\ela

For any line $\lsf\in\mathcal{L}$ represented by two distinct points $\ysf^1$ and $\ysf^2$, we may define
\bela{E36}
  \ysf^{\mu\prime} = \Lambdasf^{-1}\Bsf\ysf^\mu 
\ela
and readily see that
\bela{E37}
  {\ysf^{\mu\prime}}^T\Bsf\ysf^\nu = - {\ysf^\mu}^T\Bsf\Lambdasf^{-1}\Bsf\ysf^\nu \sim {\ysf^{\mu}}^T\Lambdasf\ysf^\nu = 0.
\ela
Hence, the line $\lsf'$ passing through the points $\ysf^{1\prime}$ and $\ysf^{2\prime}$ is the image of the line $\lsf$ under the correlation $\kappa$. Furthermore,
\bela{E38}
  {\ysf^{\mu\prime}}^T\Lambdasf\ysf^{\nu\prime} = -{\ysf^\mu}^T\Bsf\Lambdasf^{-1}\Bsf\ysf^\nu \sim{\ysf^\mu}^T\Lambdasf\ysf^\nu = 0
\ela
so that the line $\lsf'$ is an element of the linear line complex $\mathcal{L}$ as asserted. In particular, a line $\lsf\in\mathcal{L}$ which intersects the lines $\lsf^{12},\lsf^{34}$ and $\lsf^{56}$ is mapped to a line $\lsf'\in\mathcal{L}$ which intersects the lines $\lsf^{23},\lsf^{45}$ and $\lsf^{61}$. This proves the second part of the theorem.
\end{proof}

\begin{remark}
The proof of the above theorem implies that the projective transformation $\xsf \mapsto \Lambdasf^{-1}\Bsf\xsf$ maps opposite vertices $\xsf^i$ and $\xsf^{i+3}$ to each other. In fact, in this sense, a hexagon may be mapped to itself if and only if the edges of the hexagon belong to a non-degenerate linear line complex. This is seen as follows.
We consider a spatial hexagon $[\xsf^1,\xsf^2,\xsf^3,\xsf^4,\xsf^5,\xsf^6]$ in general position and its associated correlation represented by a symmetric matrix $\Bsf$. Furthermore, we assume that there exists a projective transformation $\xsf\mapsto\Csf\xsf$ which interchanges opposite vertices of the hexagon, that is,
\bela{E38a}
 \xsf^{i+3} \sim \Csf\xsf^i,
\ela
where, as usual, indices are taken modulo 6. If we define a matrix $\Lambdasf$ by
\bela{E38b}
  \Lambdasf = \Bsf\Csf^{-1}
\ela
then
\bela{E38c}
  {\xsf^i}^T\Lambdasf\xsf^i = {\xsf^i}^T\Bsf\Csf^{-1}\xsf^i\sim {\xsf^i}^T\Bsf\xsf^{i+3} = 0.
\ela
In an analogous manner, we deduce that
\bela{E38d}
     {\xsf^i}^T\Lambdasf\xsf^{i+1} =  {\xsf^{i+1}}^T\Lambdasf\xsf^i = 0.
\ela
Accordingly. if $\Lambdasf$ is skew-symmetric then the (extended) edges $[\xsf^i,\xsf^{i+1}]$ of the hexagon belong to the linear line complex defined by $\Lambdasf$. On the other hand, if $\Lambdasf^s$ denotes the symmetric part of $\Lambdasf$ then we conclude from (\ref{E38c}) and (\ref{E38d}) that
\bela{E38e}
  {\xsf^i}^T\Lambdasf^s\xsf^{i} = {\xsf^i}^T\Lambdasf^s\xsf^{i+1} = 0.
\ela
However, this implies that $\Lambdasf^s=0$ as may be shown by choosing, without loss of generality, $\xsf^1,\ldots,\xsf^4$ as the standard orthonormal basis of $\R^4$ and evaluating the conditions (\ref{E38e}).
\end{remark}

The above theorem gives rise to the following natural definition and characterisation of fundamental circle complexes.

\begin{definition}\label{correlation_def}
Two ``opposite'' circles of an elementary cube of eight oriented circles which touch along edges are {\em correlated} if the two circles are images of each other under the correlation associated with the hexagon of the remaining six circles.
\end{definition}

\begin{corollary}
A circle complex of oriented circles which touch along edges is fundamental if and only if opposite circles of any elementary cube of circles are correlated.
\end{corollary}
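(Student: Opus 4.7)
The plan is to show that the corollary follows almost directly by combining three facts already established: the definition of fundamental circle complex in terms of lines in the linear line complex $\mathcal{L}$, the characterization of fundamental line complexes via correlations recalled at the end of Section 2, and the theorem just proved which guarantees that the correlation associated with a hexagon of lines in $\mathcal{L}$ acts within $\mathcal{L}$.

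First, I would recall that by Definition~\ref{correlation_def} applied line-geometrically, fundamental line complexes are characterized by the property that in any elementary cube, the eighth line is the image of its opposite counterpart under the unique correlation $\kappa$ of $\C\P^3$ associated with the hexagon formed by the remaining six lines (this is the equivalent characterization stated after Figure~\ref{correlation}). By definition, a circle complex $\csf$ is fundamental if and only if the associated lines $\lsf$ in $\mathcal{L}$ form a fundamental line complex. Hence fundamentality of $\csf$ is equivalent to: in each elementary cube the opposite lines $\lsf$ and $\lsf^{\rm opp}$ are $\kappa$-images of each other.

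For the forward direction, if $\csf$ is fundamental then the opposite lines $\lsf^{\rm opp} = \kappa(\lsf)$. Since all six lines of the hexagon lie in $\mathcal{L}$ and $\lsf\in\mathcal{L}$, the theorem just proved ensures $\kappa(\lsf)\in\mathcal{L}$ as well. Thus both members of each opposite pair lie in $\mathcal{L}$ and correspond to circles, so by Definition~\ref{correlation_def} the opposite circles are correlated. Conversely, if opposite circles are correlated in every elementary cube, then by the same definition opposite lines are $\kappa$-images of each other, which is precisely the correlation characterization of fundamental line complexes, so $\lsf$ is fundamental and hence $\csf$ is fundamental.

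The main ``obstacle'' is conceptual rather than computational: one must verify that Definition~\ref{correlation_def} is meaningful, i.e.\ that the correlation $\kappa$ attached to a hexagon of six circles (viewed as lines in $\mathcal{L}$) actually sends lines of $\mathcal{L}$ to lines of $\mathcal{L}$, for otherwise the image line would not correspond to a circle and the notion of ``correlated circles'' would be ill-defined. This is exactly the content of the theorem above, so once that theorem is in place, the corollary is an immediate translation of the line-complex characterization into circle-geometric language and requires no further calculation.
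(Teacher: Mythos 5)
Your proposal is correct and follows exactly the route the paper intends: the corollary is stated there without a separate proof precisely because it is the immediate combination of Definition~\ref{correlation_def}, the correlation characterisation of fundamental line complexes recalled in Section~2, and the preceding theorem guaranteeing that the correlation acts within the linear line complex $\mathcal{L}$. You correctly identify that the only substantive point is the well-definedness of ``correlated circles,'' which is supplied by that theorem.
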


\subsection{The Lie geometry of linear line complexes}

For a completely geometric characterisation of fundamental circle complexes, it is now necessary to determine which of the two oriented circles $\csf_{123}$ which touch the triple of circles $\csf_{12},\csf_{23},\csf_{13}$ belonging to a hexagon of oriented circles $[\csf_1,\csf_{12},\csf_{2},\csf_{23},\csf_3,\csf_{13}]$ is correlated to any of the two oriented circles $\csf$ which touch the other triple of circles $\csf_1,\csf_2,\csf_3$. To this end, we first provide a Lie geometric characterisation of a line in $\R\P^3$ which is not in a given linear line complex
\bela{E39}
  \mathcal{L} = \{\lsf\subset\C\P^3 : \ysf^T\Lambdasf\xsf = 0\quad \forall [\xsf],[\ysf]\in\lsf\}.
\ela
Any point in $\R\P^3$ with homogeneous coordinates $\asf$ gives rise to a unique plane in $\R\P^3$ which is spanned by the one-parameter family of lines $\mathcal{L}^\asf\subset \mathcal{L}$ passing through $\asf$. This plane is represented by the orthogonal complement
\bela{E40}
  \asf^\perp = \{\xsf : \xsf^T\Lambdasf\asf = 0\}.
\ela
Any two lines $\lsf^\asf,\lsf^\asf_\ast\in\mathcal{L}^\asf$ correspond to two points in the Lie quadric $Q^3\subset Q^4$ which, in terms of homogeneous coordinates, have vanishing inner product, that is, 
\bela{E41}
  \langle\Vsf^\asf,\Vsf^\asf_\ast\rangle = 0
\ela
so that $\mathcal{L}^\asf$ represents a null line in $Q^3$ or, equivalently, a pencil $\mathcal{C}^\asf$ of touching oriented circles  in the plane. We will refer to the latter as a {\em contact element} in the plane. Accordingly, a line in $\R\P^3$ regarded as a set of points gives rise to a one-parameter family of contact elements in the plane. In the classical literature, the latter is termed a {\em turbine} (`Turbine') \cite{Strubecker53}. In particular, the turbine $\mathcal{T}^{\lsf}$ associated with a line $\lsf\in\mathcal{L}$ consists of all contact elements which contain the corresponding circle $\csf$, that is, it captures all circles which touch $\csf$. As indicated above, we now examine the geometry of the contact elements associated with lines which are not contained in the linear line complex $\mathcal{L}$. 

We consider two points $\asf$ and $\bsf$ on a given line $\lsf\not\in\mathcal{L}$ which we denote by
\bela{E42}
  \lsf = [\asf,\bsf]\not\in\mathcal{L}.
\ela
The line of intersection $\lsf'$ of the two planes represented by $\asf^\perp$ and $\bsf^\perp$ is independent of $\asf$ and $\bsf$ and is given by the orthogonal complement
\bela{E43}
  \lsf' = \lsf^\perp = \{[\ysf] : \ysf^T\Lambdasf\xsf=0\quad\forall [\xsf]\in\lsf\}
\ela
of $\lsf$. By definition of the orthogonal complement, the line connecting any two points on $\lsf$ and $\lsf'$ belongs to the linear line complex $\mathcal{L}$. Hence, if we choose two points $\asf'$ and $\bsf'$ on $\lsf'$ then the points $\asf,\asf',\bsf,\bsf'$ are the vertices of a quadrilateral, the (extended) edges of which belong to $\mathcal{L}$ (cf.\ Figure \ref{complement}).
\begin{figure}
\centerline{\includegraphics[scale=0.5]{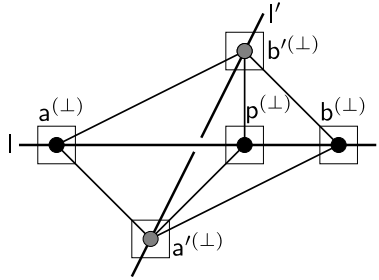}}
\caption{A line $\lsf$, its orthogonal complement $\lsf'=\lsf^\perp$ and the connecting lines of a linear line complex $\mathcal{L}$.}
\label{complement}
\end{figure}
In Lie geometric terms, this corresponds to four cyclically touching circles $\csf^{[\asf,\asf']}$,  $\csf^{[\asf',\bsf]}$,  $\csf^{[\bsf,\bsf']}$ and $\csf^{[\bsf',\asf]}$ as indicated in Figure \ref{turbine}. It is evident that the pairs of circles $\csf^{[\asf,\asf']},\,\csf^{[\asf',\bsf]}$ and 
$\csf^{[\bsf,\bsf']},\,\csf^{[\bsf',\asf]}$ generate the contact elements $\mathcal{C}^{\asf'}$ and $\mathcal{C}^{\bsf'}$ respectively. In general, the turbine $\mathcal{T}^{\lsf'}$ associated with the line $\lsf'$ constitutes the collection of contact elements which are generated by pairs of touching circles contained in the contact elements $\mathcal{C}^{\asf}$ and $\mathcal{C}^{\bsf}$ respectively. If, in the sense of M\"obius geometry, we regard points in the plane as Lie circles of ``zero radius'' defined by the intersection of the Lie quadric with a hyperplane of signature $(3,1)$ then the circles of zero radius contained in the turbine $\mathcal{T}^{\lsf'}$ form a unique circle as indicated in Figure~\ref{turbine}. 
\begin{figure}
\centerline{\includegraphics[scale=0.5]{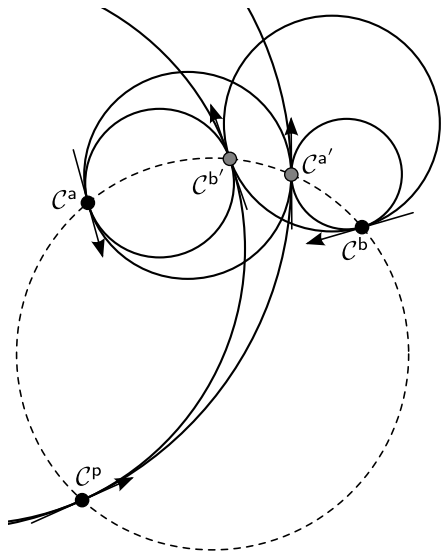}}
\caption{The Lie geometric avatar of Figure \ref{complement}: the ``location'' of the contact elements of the turbine $\mathcal{T}^{\lsf}$ and its ``orthogonal complement'' $\mathcal{T}^{\lsf'}$ with $\lsf=[\asf,\bsf]$ and $\lsf^\perp=\lsf'=[\asf',\bsf']$. }
\label{turbine}
\end{figure}
This is due to the fact that the points of contact of four cyclically touching circles are concyclic. Accordingly, we can think of the contact elements of $\mathcal{T}^{\lsf'}$ as being ``located'' on a circle. It is important to note that this circle is not a circle in the sense of Lie geometry but merely serves as an aid to visualise a turbine.

The turbine $\mathcal{T}^{\lsf}$ is now constructed by choosing an arbitrary point $\psf$ on the line $\lsf$ and noting that the lines connecting $\psf$ and $\lsf'$ correspond to the touching circles which form the contact element $\mathcal{C}^{\psf}$. In particular, the lines $[\psf,\asf']$ and $[\psf,\bsf']$ are associated with circles $\csf^{[\psf,\asf']}\in \mathcal{C}^{\asf'}$ and $\csf^{[\psf,\bsf']}\in \mathcal{C}^{\bsf'}$ which generate $\mathcal{C}^{\psf}$ as depicted in Figure \ref{turbine}. Thus, the turbine $\mathcal{T}^{\lsf}$ constitutes the collection of contact elements which are generated by pairs of touching circles contained in the contact elements $\mathcal{C}^{\asf'}$ and $\mathcal{C}^{\bsf'}$ respectively. It is evident that $\mathcal{T}^{\lsf}$ is independent of the choice of $\asf'$ and $\bsf'$ so that we may refer to $\mathcal{T}^{\lsf}$ as being generated by the contact elements $\mathcal{C}^\asf$ and $\mathcal{C}^\bsf$. Once again, in the above-mentioned sense, the contact elements of $\mathcal{T}^{\lsf}$ are located  on a unique circle which coincides with the circle attached to the turbine $\mathcal{T}^{\lsf'}$. We are now in a position to give a M\"obius geometric construction of correlated circles which is shown to be meaningful in a Lie geometric sense by interpreting circles which are not Lie circles as turbines.

\subsection{A Lie geometric characterisation of correlated circles}

We begin by reformulating the construction of an elementary cube of a fundamental line complex in terms of the concurrency property (cf.\ Definition \ref{line_complex}). To this end, we denote by 
\bela{E44}
  \lsf^{l,m} = [\psf^l,\psf^l_m],\quad l\neq m
\ela
the ``diagonal'' passing through the points $\psf^l$ and $\psf^l_m$ as depicted in Figure \ref{diagonals}.
\begin{figure}
\centerline{\includegraphics[scale=0.5]{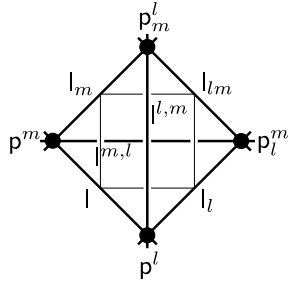}}
\caption{The relationship between the lines $\lsf$, the points of intersection $\psf^l$ and the diagonals $\lsf^{m,p}$.}
\label{diagonals}
\end{figure}
The four diagonals of the same ``type'' $\lsf^{l,m},\lsf^{l,m}_n,\lsf^{n,m},\lsf^{n,m}_l$ for distinct $l,m,n$ meet in a point which is labelled by $\psf^{(m)}$ (cf.\ Figure \ref{fundamental}).

\begin{theorem}
Let $\lsf,\lsf_1,\lsf_2,\lsf_3,\lsf_{12},\lsf_{23},\lsf_{13}$ be lines in $\R\P^3$ which are combinatorially attached to seven vertices of an elementary cube and intersect along edges in the points $\psf^l$ and $\psf^l_m$. The points $\psf^{(m)}$ are defined as the points of intersection of the pairs of diagonals $\lsf^{l,m},\lsf^{n,m}$ for distinct $l,m,n$. The pairs of lines $\lsf^{l,m}_n=[\psf^l_n,\psf^{(m)}]$ and $\lsf^{l,n}_m=[\psf^l_m,\psf^{(n)}]$ are coplanar and define the points of intersection $\psf^l_{mn}$. Then, the points $\psf^1_{23},\psf^2_{13}$ and $\psf^3_{12}$ are collinear and define an eighth line $\lsf_{123}$ so that the elementary cube of eight lines is fundamental. Moreover, if the given seven lines are in a linear line complex $\mathcal{L}$ then so is~$\lsf_{123}$.
\end{theorem}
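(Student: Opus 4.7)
The plan is to show that the diagonal-based construction of the theorem produces exactly the eighth line $\lsf_{123}$ obtained via the hexagon--correlation construction of \cite{BobenkoSchief15}; the coplanarity, collinearity and fundamental-cube assertions then all follow from properties of this known completion.

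First I would verify that $\psf^{(m)}$ is well-defined. Since $\lsf$ and $\lsf_m$ meet at $\psf^m$, they span a plane $\pi_m$, and the four points $\psf^l,\psf^n\in\lsf$ and $\psf^l_m,\psf^n_m\in\lsf_m$ all lie in $\pi_m$. Hence the diagonals $\lsf^{l,m}$ and $\lsf^{n,m}$ are coplanar and meet in a unique point $\psf^{(m)}$.

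Next I would invoke the hexagon $[\lsf_1,\lsf_{12},\lsf_2,\lsf_{23},\lsf_3,\lsf_{13}]$ and its associated correlation $\kappa$, setting $\lsf_{123}:=\kappa(\lsf)$. By \cite{BobenkoSchief15} (cf.\ Figure~\ref{correlation}), the eight lines form an elementary cube of a fundamental line complex; in particular $\lsf_{123}$ meets $\lsf_{23},\lsf_{13},\lsf_{12}$ in three collinear points, which we identify with $\psf^1_{23},\psf^2_{13},\psf^3_{12}$. It remains to match this with the diagonal-based description. By the concurrency property of Definition~\ref{line_complex}, for each $m$ the four type-$m$ diagonals $\lsf^{l,m},\,\lsf^{n,m},\,\lsf^{l,m}_n=[\psf^l_n,\psf^l_{mn}],\,\lsf^{n,m}_l=[\psf^n_l,\psf^n_{ml}]$ are concurrent; since two of them already fix the concurrency point at $\psf^{(m)}$, we conclude $\lsf^{l,m}_n=[\psf^l_n,\psf^{(m)}]$ and analogously $\lsf^{l,n}_m=[\psf^l_m,\psf^{(n)}]$. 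Both lines pass through $\psf^l_{mn}$, establishing their coplanarity.

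For the final assertion, I would appeal directly to the preceding theorem on correlations within a linear line complex: when all seven given lines lie in $\mathcal{L}$, the correlation $\kappa$ acts within $\mathcal{L}$, whence $\lsf_{123}=\kappa(\lsf)\in\mathcal{L}$. The main obstacle, I expect, is the reconciliation step: rather than proving the coplanarity of $\lsf^{l,m}_n$ and $\lsf^{l,n}_m$ ab initio, it is essential to have the completed fundamental cube already in hand so that its intrinsic concurrency point can be matched against the pairwise diagonal intersection defining $\psf^{(m)}$. Once this identification is made, the remainder is bookkeeping.
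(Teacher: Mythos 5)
Your proposal is correct and matches the paper's intent: the paper states this theorem without an explicit proof, presenting it as a reformulation of the correlation-based completion of seven lines (from \cite{BobenkoSchief15}, cf.\ Section 2.4) in terms of the concurrency property of Definition \ref{line_complex}, with the final assertion following from the immediately preceding theorem that the correlation associated with a hexagon in a linear line complex acts within $\mathcal{L}$. Your identification of the concurrency point of the four type-$m$ diagonals with $\psf^{(m)}$, and hence of the shifted diagonals with $[\psf^l_n,\psf^{(m)}]$ and $[\psf^l_m,\psf^{(n)}]$ meeting at $\psf^l_{mn}$ on $\lsf_{123}$, is exactly the intended bookkeeping.
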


On use of the following dictionary, the above theorem may now be directly translated into the language of Lie geometry.
\begin{center}
    \begin{tabular}{ | p{5cm} | p{5cm} |}
    \hline
    linear line complex geometry & Lie circle geometry\\ \hline\hline
    line $\lsf$ & circle $\csf$\\ \hline
    point of intersection $\psf^l$ & contact element $\mathcal{C}^l$\\ \hline
    diagonal  $\lsf^{l,m}$ & turbine $\mathcal{T}^{l,m}$\\ \hline
    point of concurrency $\psf^{(l)}$ & common contact element $\mathcal{C}^{(l)}$\\ 
   \hline
    \end{tabular}
\end{center}
\noindent
It is noted that the turbine $\mathcal{T}^{l,m}$ is generated by the contact elements $\mathcal{C}^l$ and~$\mathcal{C}^l_m$.

\begin{corollary}\label{corollary}
Let $\csf,\csf_1,\csf_2,\csf_3,\csf_{12},\csf_{23},\csf_{13}$ be oriented circles in $\R^2$ which are combinatorially attached to seven vertices of an elementary cube and touch along edges, thereby defining the contact elements $\mathcal{C}^l$ and $\mathcal{C}^l_m$. The turbines $\mathcal{T}^{l,m}$ and $\mathcal{T}^{n,m}$ for distinct $l,m,n$ share a contact element denoted by $\mathcal{C}^{(m)}$. The turbines $\mathcal{T}^{l,m}_n$ and $\mathcal{T}^{l,n}_m$ generated by the pairs of contact elements $\mathcal{C}^l_n,\mathcal{C}^{(m)}$ and $\mathcal{C}^l_m,\mathcal{C}^{(n)}$ respectively have a contact element $\mathcal{C}^l_{mn}$ in common. Then, there exist a unique circle $\csf_{123}$ which is contained in the contact elements  $\mathcal{C}^1_{23},\mathcal{C}^2_{13}$ and $\mathcal{C}^3_{12}$. By construction, the circles $\csf_{123}$ and $\csf$ are correlated and the elementary cube of eight circles is fundamental.
\end{corollary}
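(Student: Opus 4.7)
The plan is to apply the preceding theorem essentially verbatim after verifying that the dictionary displayed above provides a faithful translation of the line-geometric data into circle-geometric data. Each oriented circle $\csf^A$ attached to a vertex of the cube corresponds to a line $\lsf^A\in\mathcal{L}$ under the Lie correspondence between points of the Lie quadric $Q^3$ and lines of the linear line complex; oriented contact of two neighbouring circles translates into intersection of the corresponding lines; and the point of intersection $\psf^l=\lsf\cap\lsf_l$ parametrises precisely the pencil of touching circles forming the contact element $\mathcal{C}^l$. Hence the seven given circles give rise to seven lines in $\mathcal{L}$ which satisfy the hypotheses of the preceding theorem.

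The key step is to verify the dictionary for the diagonals and the points of concurrency. A diagonal $\lsf^{l,m}=[\psf^l,\psf^l_m]$ is generically \emph{not} contained in $\mathcal{L}$, and, by the construction of Section~4.3, its points correspond to the contact elements of a turbine $\mathcal{T}^{l,m}$. Crucially, the turbine is generated by the contact elements attached to any two of its points; choosing the endpoints $\psf^l$ and $\psf^l_m$ identifies $\mathcal{T}^{l,m}$ with the turbine generated by $\mathcal{C}^l$ and $\mathcal{C}^l_m$, which is exactly the turbine appearing in the statement. A point $\psf^{(m)}$ lying on both diagonals $\lsf^{l,m}$ and $\lsf^{n,m}$ therefore corresponds to a contact element $\mathcal{C}^{(m)}$ common to the two turbines $\mathcal{T}^{l,m}$ and $\mathcal{T}^{n,m}$, and the same reasoning applies to the shifted diagonals $\lsf^{l,m}_n,\lsf^{l,n}_m$ and to their point of intersection $\psf^l_{mn}$, yielding the common contact element $\mathcal{C}^l_{mn}$.

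Invoking the preceding theorem then produces three collinear points $\psf^1_{23},\psf^2_{13},\psf^3_{12}$ lying on an eighth line $\lsf_{123}$, and the theorem asserts furthermore that $\lsf_{123}\in\mathcal{L}$. Back via the dictionary, this line corresponds to a unique oriented circle $\csf_{123}$ in $\R^2$, while the three collinear points become the three contact elements $\mathcal{C}^1_{23},\mathcal{C}^2_{13},\mathcal{C}^3_{12}$, all of which contain $\csf_{123}$; uniqueness of $\csf_{123}$ is inherited from uniqueness of $\lsf_{123}$. Finally, since $\lsf_{123}$ is the image of $\lsf$ under the hexagon correlation which, by Theorem on linear line complexes, acts within $\mathcal{L}$, the circles $\csf$ and $\csf_{123}$ are correlated in the sense of Definition~\ref{correlation_def}, and the preceding characterisation corollary guarantees that the elementary cube of eight circles is fundamental.

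The main obstacle is making the turbine correspondence entirely precise and verifying its independence of the auxiliary choices inherent in building a turbine from its points; this is, however, exactly what the Lie geometric analysis of Section~4.3 establishes, so once the dictionary is in place the statement follows from the preceding theorem with no additional calculation.
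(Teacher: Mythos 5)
Your proposal is correct and follows essentially the same route as the paper, which presents this corollary as the direct dictionary translation of the preceding theorem on lines in a linear line complex, with the turbine/contact-element correspondence of Section 4.3 doing exactly the work you describe. The only difference is that you spell out the translation in more detail than the paper does, which states it without a separate proof.
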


In view of the following theorem, we refer to four oriented circles $\csf^a,\csf^b,\csf^c,\csf^d$ which touch each other cyclically and therefore generate contact elements\linebreak $\mathcal{C}^{ab},\mathcal{C}^{bc}, \mathcal{C}^{cd},\mathcal{C}^{da}$ as a `quadrilateral of circles' $[\csf^a,\csf^b,\csf^c,\csf^d]$. The two turbines $\mathcal{T}^{ab,cd}$ and $\mathcal{T}^{bc,da}$ generated by the pairs of contact elements $\mathcal{C}^{ab},\mathcal{C}^{cd}$ and $\mathcal{C}^{bc},\mathcal{C}^{da}$ respectively constitute the ``diagonals'' of the quadrilateral. Moreover, for any pair of contact elements $\mathcal{C}\in\mathcal{T}^{ab,cd}$ and $\mathcal{C}'\in\mathcal{T}^{bc,da}$, there exists a unique circle contained in both contact elements $\mathcal{C}$ and $\mathcal{C}'$. In the above-mentioned M\"obius geometric interpretation, the two turbines $\mathcal{T}^{ab,cd}$ and $\mathcal{T}^{bc,da}$ are represented by the same circle $\csf^{abcd}$ which passes through the ``points of contact'' $p^{ab},p^{bc},p^{cd},p^{da}$ of the four circles.

\begin{theorem}\label{tim}
Given a hexagon $[\csf_1,\csf_{12},\csf_{2},\csf_{23},\csf_3,\csf_{13}]$ of cyclically touching oriented circles and one of the two circles $\csf$ which touch the triple of circles $\csf_1,\csf_2,\csf_3$, we denote by $\csf^{lm},\,l\neq m$ the circle passing through the points of contact $p^l,p^m,p^l_m,p^m_l$ (in the M\"obius geometric sense) of the quadrilateral of circles $[\csf,\csf_l,\csf_{lm},\csf_{m}]$  (cf.\ Figure~\ref{eight_circle}). The circles $\csf^{lm}$ and $\csf^{nm}$ intersect in the additional point $p^{(m)}$ for distinct $l,m,n$. There exist circles $\csf^{lm}_n$ which pass through the two points of contact $p^l_n,p^m_n$ of the circles $\csf_n,\csf_{ln},\csf_{mn}$ and the points $p^{(l)},p^{(m)}$. The circles $\csf^{lm}_n,\csf^{ln}_m$ and $\csf_{mn}$ meet in a point $p^l_{mn}$. Then, the (appropriately oriented) circle $\csf_{123}$ passing through the points $p^1_{23},p^2_{13}$ and $p^3_{12}$ touches the triple of circles $\csf_{12},\csf_{23},\csf_{13}$, is well-defined in a Lie geometric sense and is correlated to the circle $\csf$ in the sense of Definition \ref{correlation_def}.
\end{theorem}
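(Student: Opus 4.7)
The plan is to show that the construction described in the theorem is nothing more than the Möbius-geometric translation of the purely Lie-geometric construction of Corollary \ref{corollary}. The dictionary required is the following \emph{representation principle}: every turbine $\mathcal{T}$ is ``located'' on a unique Möbius circle, and if $\mathcal{T}$ arises as one of the two diagonal turbines $\mathcal{T}^{ab,cd},\mathcal{T}^{bc,da}$ of a quadrilateral $[\csf^a,\csf^b,\csf^c,\csf^d]$ of cyclically touching oriented circles, then this location circle is precisely the concyclic Möbius circle $\csf^{abcd}$ through the four contact points $p^{ab},p^{bc},p^{cd},p^{da}$ recorded in the paragraph preceding the theorem. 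Under this representation, the points of the location circle are in bijection with the contact elements contained in $\mathcal{T}$, and two turbines share a contact element exactly when their location circles meet in the corresponding point.

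Apply the principle to the three quadrilaterals $[\csf,\csf_l,\csf_{lm},\csf_m]$; these are cyclically touching by the hexagon hypothesis and by the choice of $\csf$, so their location circles are exactly the circles $\csf^{lm}$ in the statement, representing the turbines $\mathcal{T}^{l,m}$ of Corollary \ref{corollary}. Two such location circles $\csf^{lm}$ and $\csf^{nm}$ both pass through $p^m$ (common to both quadrilaterals) and hence meet in a unique second point $p^{(m)}$; by the representation principle, $p^{(m)}$ is the Möbius location of the shared contact element $\mathcal{C}^{(m)}$ produced by Corollary \ref{corollary}.

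Next, the turbines $\mathcal{T}^{l,m}_n$ of Corollary \ref{corollary} are generated by the contact elements $\mathcal{C}^l_n$ and $\mathcal{C}^{(m)}$. To identify their location circles with the $\csf^{lm}_n$ of the theorem, realise $\mathcal{T}^{l,m}_n$ as a diagonal turbine of a quadrilateral of four cyclically touching circles whose contact points are precisely $p^l_n,p^m_n,p^{(l)},p^{(m)}$; this quadrilateral is assembled from $\csf_n$, the shifted circles $\csf_{ln},\csf_{mn}$, and an auxiliary fourth circle simultaneously tangent in the contact elements $\mathcal{C}^{(l)},\mathcal{C}^{(m)}$ whose existence is again guaranteed by Corollary \ref{corollary}. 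The sharing of the contact element $\mathcal{C}^l_{mn}$ by the turbines $\mathcal{T}^{l,m}_n$ and $\mathcal{T}^{l,n}_m$ then translates into the fact that the location circles $\csf^{lm}_n$ and $\csf^{ln}_m$ meet at the associated point $p^l_{mn}$, and because $\csf_{mn}\in\mathcal{C}^l_{mn}$, the circle $\csf_{mn}$ passes through $p^l_{mn}$ as well.

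Finally, Corollary \ref{corollary} furnishes a single circle $\csf_{123}$ contained in $\mathcal{C}^1_{23},\mathcal{C}^2_{13},\mathcal{C}^3_{12}$ and correlated to $\csf$ in the sense of Definition \ref{correlation_def}. In Möbius terms, a circle lying in $\mathcal{C}^l_{mn}$ is exactly a circle passing through $p^l_{mn}$ with the appropriate tangent direction, so $\csf_{123}$ passes through the three points $p^1_{23},p^2_{13},p^3_{12}$ and, with the orientation forced by oriented contact with $\csf_{12},\csf_{23},\csf_{13}$, coincides with the circle described in the theorem. The main obstacle in executing this programme is the third step: rigorously identifying $\mathcal{T}^{l,m}_n$ as a diagonal turbine of an honest quadrilateral of cyclically touching circles with the four prescribed contact points requires exhibiting the auxiliary mutually tangent pair realising $\mathcal{C}^{(l)},\mathcal{C}^{(m)}$, which is the point at which a genuine Miquel-type concyclicity must be invoked in its own right rather than merely transported from the Lie-geometric picture.
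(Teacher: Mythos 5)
Your overall strategy---representing each turbine by the M\"obius circle on which its contact elements are ``located'' and transporting Corollary \ref{corollary} through this dictionary---is precisely the route the paper takes, and your steps for the circles $\csf^{lm}$, the points $p^{(m)}$ and the final identification of $\csf_{123}$ match the paper's proof. The gap you yourself flag in the third step is, however, a genuine one, and the repair you sketch would fail. You propose to realise $\mathcal{T}^{l,m}_n$ and $\mathcal{T}^{m,l}_n$ as the diagonal turbines of a quadrilateral assembled from $\csf_n$, $\csf_{ln}$, $\csf_{mn}$ and one auxiliary circle tangent in $\mathcal{C}^{(l)}$ and $\mathcal{C}^{(m)}$. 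For such a quadrilateral to have contact points $p^l_n,p^m_n,p^{(l)},p^{(m)}$, the circles $\csf_{ln}$ and $\csf_{mn}$ would have to belong to $\mathcal{C}^{(l)}$ or $\mathcal{C}^{(m)}$, i.e.\ the lines $\lsf_{ln},\lsf_{mn}$ would have to pass through the concurrency points $\psf^{(l)},\psf^{(m)}$. This is false in general: $\psf^{(m)}$ lies on the diagonals $\lsf^{l,m}=[\psf^l,\psf^l_m]$, $\lsf^{l,m}_n=[\psf^l_n,\psf^l_{mn}]$, etc., not on the lines of the complex themselves. So the quadrilateral you describe does not exist, and the concyclicity of $p^l_n,p^m_n,p^{(l)},p^{(m)}$---the existence of $\csf^{lm}_n$, which is the crux of the theorem---remains unproved in your plan.

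The missing idea is that no independent Miquel-type concyclicity has to be invoked here; the required quadrilateral is already supplied by the line-geometric structure. The lines $\lsf^{l,m}_n=[\psf^l_n,\psf^l_{mn}]$ and $\lsf^{m,l}_n=[\psf^m_n,\psf^m_{ln}]$ are the two diagonals of the quadrilateral of points $\psf^l_n,\psf^m_{ln},\psf^l_{mn},\psf^m_n$, whose four edges lie on $\lsf_{ln},\lsf_{lmn},\lsf_{mn},\lsf_n\in\mathcal{L}$; since every vertex of that quadrilateral is then $\Lambdasf$-orthogonal to both endpoints of the opposite diagonal, the two diagonals are mutual orthogonal complements. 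Hence $\mathcal{T}^{l,m}_n$ and $\mathcal{T}^{m,l}_n$ are the two diagonal turbines of a single quadrilateral of circles (the paper phrases this as the quadrilateral with ``vertices'' $\mathcal{C}^l_n,\mathcal{C}^{(m)}$ and $\mathcal{C}^m_n,\mathcal{C}^{(l)}$) and are therefore located on one and the same circle. That common location circle contains $p^l_n$ and $p^{(m)}$ because $\psf^l_n,\psf^{(m)}\in\lsf^{l,m}_n$ (the latter by the concurrency property defining $\psf^{(m)}$), and $p^m_n$ and $p^{(l)}$ for the analogous reason; this is exactly the circle $\csf^{lm}_n$. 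With this Lie-geometric input in place of your auxiliary-circle construction, the remainder of your argument goes through as in the paper.
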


\begin{figure}
  \centerline{\includegraphics[scale=0.35]{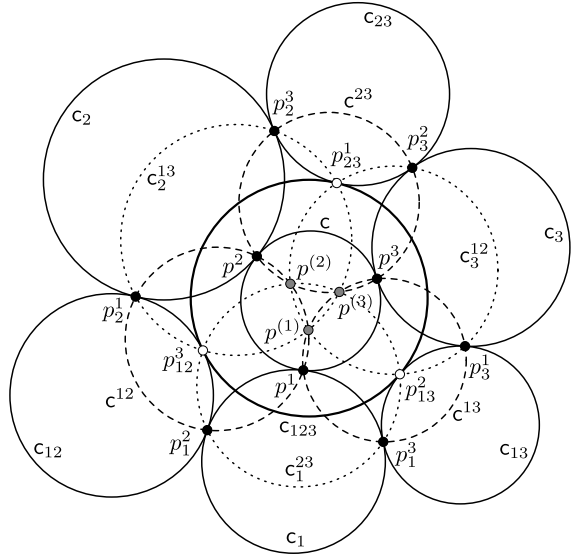}}
  \caption{The M\"obius geometric construction of pairs of correlated circles $\csf,\csf_{123}$.}
\label{eight_circle}
\end{figure}

\begin{proof}
In the context of M\"obius geometry, this theorem may be proven by iteratively applying (degenerations of) Miquel's theorem \cite{Pedoe88}. Here, we present a proof which demonstrates that the circle $\csf_{123}$ not only exists but also admits a Lie geometric interpretation. Thus, the turbines $\mathcal{T}^{l,m}$ related to the quadrilaterals $[\csf,\csf_l,\csf_{lm},\csf_{m}]$ are represented by the (dashed) circles $\csf^{lm}$ passing through the points of contact $p^l,p^m,p^l_m,p^m_l$ as indicated in Figure \ref{eight_circle}. The turbines $\mathcal{T}^{l,m}$ and $\mathcal{T}^{n,m}$ share a contact element $\mathcal{C}^{(m)}$ which is ``located'' at the point of intersection $p^{(m)} \subset \csf^{lm}\cap\csf^{nm}$. Now, Corollary \ref{corollary} implies that the turbines $\mathcal{T}^{l,m}_n$ and $\mathcal{T}^{m,l}_n$ are the diagonals of a quadrilateral of circles with ``vertices'' $\mathcal{C}^l_n,\mathcal{C}^{(m)}$ and $\mathcal{C}^m_n,\mathcal{C}^{(l)}$. The (dotted) circle $\csf^{lm}_n$ representing these two turbines passes through the points of contact $p^l_n,p^m_n$ and $p^{(l)},p^{(m)}$. The turbines $\mathcal{T}^{l,m}_n$ and $\mathcal{T}^{l,n}_m$ share a contact element $\mathcal{C}^l_{mn}$ which contains the circle $\csf_{mn}$. Hence, the circles $\csf^{lm}_n,\csf^{ln}_m$ and $\csf_{mn}$ meet at a point $p^l_{mn}$. Since there exists a unique circle $\csf_{123}$ contained in the contact elements $\mathcal{C}^1_{23},\mathcal{C}^2_{13}$ and $\mathcal{C}^3_{12}$ which therefore touches the triple of circles $\csf_{12},\csf_{23},\csf_{13}$, this (bold) circle passes through the points $p^1_{23},p^2_{13}$ and $p^3_{12}$ in the M\"obius geometric sense.
\end{proof} 

\section{M\"obius geometry}

We now embark on a discussion of the existence of fundamental complexes in subgeometries of Lie circle geometry. Here, we consider the intersection of the Lie quadric $Q^3$ with a hyperplane of signature $(3,1)$ leading to a two-dimensional quadric $Q^2$. The points in this ``M\"obius quadric'' may be interpreted as circles of ``zero radius'', that is, points in the plane $\R^2$ \cite{Blaschke29}. (Oriented) circles in the plane are represented by points in the complement $Q^3\backslash Q^2$. A point $\psf$ lies on a circle $\csf$ if and only if the inner product of their homogeneous coordinates $\Vsf_\bullet$ and $\Vsf_\ocircle$ respectively vanishes, that is,
\bela{E46}
  \langle\Vsf_\bullet,\Vsf_\ocircle\rangle = 0.
\ela
It is therefore natural to examine fundamental circle complexes which are such that every second Lie circle constitutes a point in the sense of M\"obius geometry. It turns out that the orientation of the other half of the Lie circles is insignificant and may be introduced in a consistent manner for any given fundamental complex of points and non-oriented circles. Hence, the fundamental point-circle complexes defined below may be interpreted as particular fundamental circle complexes and therefore correspond to a subclass of solutions of the symmetric $M$-system (\ref{E12}).  

\begin{definition}
A configuration of points and circles combinatorially attached to the vertices of the even and odd sublattices of a $\Z^3$ lattice respectively is termed a {\em fundamental point-circle complex} if the points and circles are incident along the edges of the $\Z^3$ lattice.
\end{definition}

\subsection{Geometric construction of fundamental point-circle complexes}

In order to determine a canonical Cauchy problem for fundamental point-circle complexes, we represent a fundamental point-circle complex $(\psf,\csf)$ by two maps
\bela{E47}
 \bear{rl}
  \psf : \De_3 & \rightarrow \{\mbox{points in $\R^2$}\}\as
  \csf : \Do_3 & \rightarrow \{\mbox{circles in $\R^2$}\},
 \ear
\ela
where
\bela{E48}
 \bear{l}
  \De_3 = \{(n_1,n_2,n_3)\in\Z^3 : n_1 + n_2 + n_3\mbox{ even}\}\as
  \Do_3 = \{(n_1,n_2,n_3)\in\Z^3 : n_1 + n_2 + n_3\mbox{ odd}\}
 \ear
\ela
denote the even and odd sublattices of $\Z^3$ respectively. It is immediately verified that the prescription of points and circles on the coordinate planes $n_i=0$ as Cauchy data is inadmissible since there exist two types of elementary cubes of a fundamental point-circle complex as depicted in Figure \ref{cubes}.
\begin{figure}
\centerline{\includegraphics[scale=0.5]{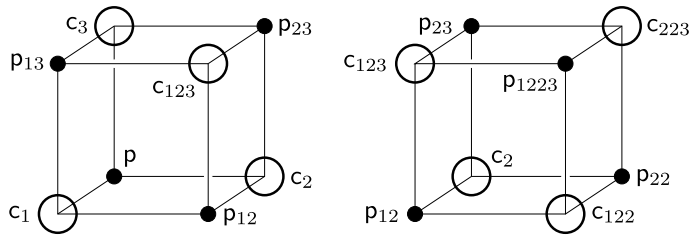}}
\caption{The two types of elementary cubes of a fundamental point-circle complex.}
\label{cubes}
\end{figure}
Indeed, given four points and three circles such as $\psf,\psf_{12},\psf_{23},\psf_{13}$ and $\csf_1,\csf_2,\csf_3$ which respect incidence, the fourth circle $\csf_{123}$ is uniquely determined by the points $\psf_{12},\psf_{23},\psf_{13}$. However, in general, three points and four circles do not define the remaining fourth point, that is, for instance, $\csf_{123}$ and the additional Cauchy data $\csf_{223},\csf_{122}$ are, in general, not concurrent and, hence, the point $\psf_{1223}$ does not exist. 

We proceed by decomposing the lattice $\Z^3 = \De_3\cup\Do_3$ into two-dimensional slices of $A$ type, that is,
\bela{E49}
 \Z^3 = \bigcup_{k\in\Z} A^k_2,\quad A^k_2 =  \{(n_1,n_2,n_3)\in\Z^3 : n_1 + n_2 + n_3 = k\}
\ela
so that (the image of) each $A_2$ slice ``contains'' either points or circles, depending on whether $k$ is even or odd respectively (cf.\ Figure \ref{slices}). 
\begin{figure}
\centerline{\includegraphics[scale=0.5]{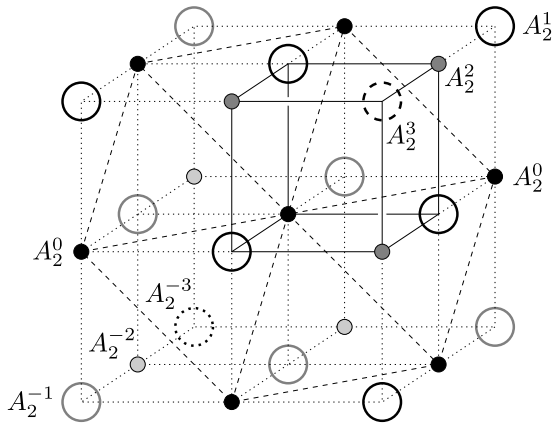}}
\caption{Points and circles associated with the slices $A^k_2$.}
\label{slices}
\end{figure}
Any fundamental point-circle complex $(\psf,\csf)$ may be reconstructed from the points $\psf(A^0_2)$. If we regard $A^0_2$ as a triangular lattice then there exist two types of triangles $\nabla$ and $\Delta$, the vertices of which are mapped to, for instance, $\psf(\nabla) = (\psf,\psf_{\ob3},\psf_{\tb3})$ and $\psf(\Delta) = (\psf,\psf_{1\hb},\psf_{2\hb})$ respectively. The circles passing through these triples of points are given by $\csf(\nabla)=\csf_3$ and $\csf(\Delta) = \csf_{\hb}$. Hence, the circles $\csf(A^1_2)$ and $\csf(A^{-1}_2)$ have been retrieved. The points $\psf(A^2_2)$ and $\psf(A^{-2}_2)$ are then the points of concurrency of appropriate triples of circles in 
 $\csf(A^1_2)$ and $\csf(A^{-1}_2)$ respectively. Specifically, for instance, $\psf_{23} = \csf_2\cap \csf_3\cap\csf_{\ob23}$ and $\psf_{\tb\hb} = \csf_{\tb}\cap\csf_{\hb}\cap_{1\tb\hb}$. However, it is evident that their existence imposes constraints on the points $\psf(A^0_2)$. This process may be continued to reconstruct the circles $\csf(A^3_2),\csf(A^{-3}_2)$ and points $\psf(A^4_2),\psf(A^{-4}_2)$ and, in fact, all remaining points and circles. It turns out that, remarkably, the existence of the points $\psf(A^4_2),\psf(A^{-4}_2)$ does not impose any further constraints on the points $\psf(A^0_2)$. This may be exploited to formulate a canonical Cauchy problem for fundamental point-circle complexes.

\subsubsection{The evolution of constrained two-dimensional Cauchy data}

\begin{theorem}\label{theorem_cauchy}
A fundamental point-circle complex $(\psf,\csf)$ is uniquely determined by constrained Cauchy data $\psf(A^{-2}_2)$, $\csf(A^{-1}_2)$, $\psf(A^0_2)$, $\csf(A^1_2)$ and $\psf(A^2_2)$ which respect incidence (cf.\ Figure \ref{cauchy_circle}). Hence, if points $\psf(A^0_2)$ are chosen in such a manner that the points $\psf(A^{-2}_2)$  and $\psf(A^2_2$) exist then an associated fundamental point-circle complex exists and is unique.
\end{theorem}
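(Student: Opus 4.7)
The plan is to prove the theorem by induction on $|k|$, reconstructing the slices $\psf(A^k_2)$ (for $k$ even) and $\csf(A^k_2)$ (for $k$ odd) one at a time outward from the given five central slices. Two elementary operations drive the induction: (i) passing from points $\psf(A^k_2)$ to the adjacent circles $\csf(A^{k\pm 1}_2)$ by drawing, through every triangle of the triangular lattice $A^k_2$, the unique circle through its three vertices; and (ii) passing from circles $\csf(A^k_2)$ to the adjacent points $\psf(A^{k\pm 1}_2)$ by declaring each vertex of $A^{k\pm 1}_2$ to be the common intersection of its three adjacent circles. Operation (i) is always well-defined and imposes no condition; the content of the theorem is that operation (ii) succeeds at every inductive step under the stated hypothesis, i.e.\ that the required triples of circles do meet in a common point.

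The base case is encoded in the Cauchy hypothesis itself: the existence of $\psf(A^{\pm 2}_2)$ is precisely the concurrency of the relevant circle triples in $\csf(A^{\pm 1}_2)$. For the inductive step, assume that all slices with $|j|\le |k|+1$ have been built and consider a would-be point $\psf^*$ at a vertex $v\in A^{k+2}_2$. I choose the elementary cube of $\Z^3$ whose top vertex is $v$; by the induction hypothesis its remaining seven vertex-objects (one Lie circle at level $k-1$, three points at level $k$, and three circles at level $k+1$) have all been constructed. By Corollary \ref{corollary} --- a direct consequence of Theorem \ref{tim} and Definition \ref{correlation_def} --- there exists a unique eighth Lie circle at $v$ making the cube fundamental. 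Oriented contact along the three cube edges emanating from $v$ forces this eighth Lie circle to be incident with each of the three adjacent circles of $\csf(A^{k+1}_2)$, and by the point-circle parity of the cube it must in fact be a point, yielding the common intersection that defines $\psf^*$.

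The main obstacle is the verification of two geometric facts needed to apply Corollary \ref{corollary} in this setting: that the six Lie circles other than the one at $v$ form a genuine hexagon of cyclically touching circles, and that the Apollonius-type eighth Lie circle selected by the correlation is a point rather than a proper oriented circle. The first is guaranteed precisely by the five-slice hypothesis: two full layers of verified fundamental cubes around $A^0_2$ must be in place before the induction begins so that each new cube shares faces with previously verified fundamental cubes and thereby inherits the required cyclic-contact incidences of its defining hexagon. The second is the point-circle analogue of the reductions discussed in Section 4 and follows because the opposite face of the cube already contains a point inherited from the Cauchy data, which via the correlation forces its opposite in the cube to be a point as well.

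Iterating the two operations in both directions $k\to\pm\infty$ then produces all remaining slices with no further constraint on $\psf(A^0_2)$, and uniqueness at each step delivers the unique fundamental point-circle complex extending the given data. An alternative, quicker but less geometric, proof would invoke the multi-dimensional consistency of the real symmetric $M$-system established in Section 4: the five-slice Cauchy data translate into sufficient initial values on three coordinate hyperplanes of $\Z^3$, from which the consistency of the $M$-system propagates the solution over the entire lattice with no additional conditions, the point-circle parity appearing as an admissible reduction compatible with the evolution.
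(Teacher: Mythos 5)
Your inductive step has a genuine gap, and it is exactly the point this theorem is designed to address. Consider the cube whose top vertex $v$ lies in an even slice $A_2^{k+2}$: its bottom vertex carries a \emph{circle}, the three intermediate levels carry points and then circles, and $v$ is supposed to carry a point. Corollary \ref{corollary} does supply a unique eighth Lie circle correlated to the bottom circle and in oriented contact with the three circles of $\csf(A_2^{k+1})$ adjacent to $v$ --- but that Lie circle is generically a proper oriented circle, \emph{not} a point. It is a point only if the three adjacent circles are concurrent, and this concurrency is an additional condition that no single elementary cube can deliver: the paper states explicitly that ``three points and four circles do not define the remaining fourth point'', which is precisely why three-slice Cauchy data are inadmissible and five slices are needed. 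Your appeal to ``point-circle parity'' fails because the vertex opposite to $v$ carries a circle, not a point, and the correlation gives no reason for its image to lie in the M\"obius quadric $Q^2$ (the Laguerre section, where the correlated partner of a line turns out to be the point at infinity rather than the expected circle, shows how delicate such parity claims are). Your fallback via the $M$-system begs the same question: the paper remarks that deriving the point-circle reduction of the symmetric $M$-system is non-trivial, so ``the point-circle parity appearing as an admissible reduction'' is an assertion, not an argument.

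The paper's proof closes this gap with a non-local incidence result, Theorem \ref{rhombic_theorem} (the 12-circle theorem for rhombic dodecahedral point-circle configurations, proved via the identity $\prod_{k=1}^{12}\crsf_k=1$ over the twelve oriented faces). Concretely: the circles $\csf(A_2^3)$ are determined from $\psf(A_2^2)$ with no condition, and the concurrency of the relevant triples in $\csf(A_2^3)$ --- i.e.\ the existence of $\psf(A_2^4)$ --- is shown to be \emph{equivalent} to the concurrency of the corresponding triples in $\csf(A_2^{-1})$, i.e.\ to the existence of $\psf(A_2^{-2})$, which is part of the Cauchy data. This equivalence, iterated forwards and backwards, is the entire content of the propagation; the two extra layers $\psf(A_2^{\pm2})$ are not there merely to provide ``verified fundamental cubes'' for hexagon incidences, but to seed the concurrencies that the 12-circle theorem transports outward. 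Without Theorem \ref{rhombic_theorem} (or an equivalent), your induction cannot get past the first type-2 cube.
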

\begin{figure}
\centerline{\includegraphics[scale=0.5]{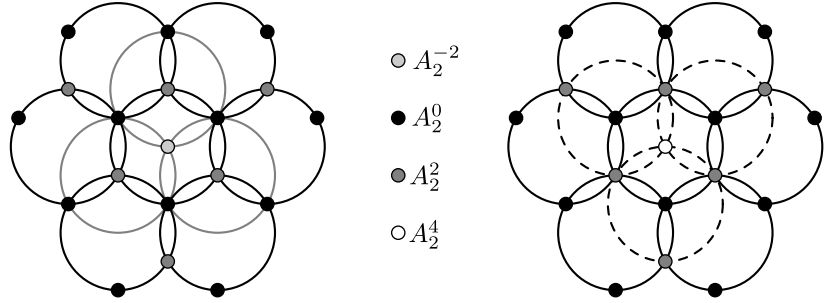}}
\caption{Constrained Cauchy data (left) for a fundamental point-circle complex and their propagation (right) along the $\Z^3$ lattice.}
\label{cauchy_circle}
\end{figure}

\begin{proof}
Given constrained Cauchy data $\csf(A^{-1}_2)$, $\psf(A^0_2)$, $\csf(A^1_2)$ and $\psf(A^2_2)$, the circles $\csf(A^3_2)$ (dashed in Figure \ref{cauchy_circle}) are uniquely determined. The key ``12-circle theorem'' given below then guarantees that the points  $\psf(A^4_2)$ exist if and only if the points $\psf(A^{-2}_2)$ exist. Thus, if the latter are included in the Cauchy data then the points $\psf(A^4_2)$ exist. This process may now be iterated in both forward and backward directions to cover the entire $\Z^3$ lattice.
\end{proof}

The above proof relies on the observation that given the six black ``outer'' circles in Figure \ref{cauchy_circle} then the three dashed circles are concurrent if and only if the three grey circles are concurrent. It is noted that this assertion is true regardless of whether the ``central'' black circle exists or not. This is stated in the following theorem.

\begin{theorem}\label{rhombic_theorem}
Given six cyclically intersecting black circles as in Figure \ref{twelvecircles}, another six grey and dashed circles passing through black and grey points of intersection respectively may be introduced in the combinatorial manner of Figure~\ref{cauchy_circle}. Then, the three grey circles are concurrent if and only if the three dashed circles are concurrent.
\end{theorem}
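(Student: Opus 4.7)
The plan is to prove this as a M\"obius-invariant incidence theorem, reducing it via cross-ratio identities and repeated applications of Miquel's four-circle theorem to a single algebraic condition on the twelve pairwise intersection points of the six black circles. Since the statement is invariant under M\"obius transformations of the plane, I would first normalise the configuration by sending one chosen intersection point to infinity, which turns two of the six black circles into straight lines and simplifies the subsequent computation without altering any concurrency.

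After this normalisation, the six black circles are specified and with them the twelve cyclically arranged intersection points, half of which are labelled ``black'' and half ``grey'' according to the combinatorics of Figure \ref{cauchy_circle}. The three grey circles are each prescribed as the circumcircles of a triple of black points and the three dashed circles as the circumcircles of a triple of grey points. Working on the Riemann sphere, the concurrency of the three grey circles at a common point can be encoded as a single multi-ratio identity in the twelve intersection points, and similarly the concurrency of the three dashed circles is equivalent to a second multi-ratio identity.

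The crux of the argument is then to show that these two multi-ratio conditions coincide. To this end I would apply Miquel's theorem to each of the six ``hexagonal corners'' --- each corner consisting of two neighbouring black circles together with the grey and the dashed circles passing through their intersection points --- obtaining six cross-ratio identities. Multiplying these six identities cyclically around the hexagon and cancelling telescopically should reduce them precisely to the equivalence of the two multi-ratio conditions, establishing the ``if and only if''.

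The main obstacle will be the combinatorial bookkeeping rather than any genuinely hard computation. The configuration enjoys a three-fold cyclic symmetry, but one must carefully track which of the two intersection points of each adjacent pair of black circles is the ``black'' one and which is the ``grey'' one, and then orient the six Miquel identities so that their product telescopes cleanly into the target identity. The remark that the ``central'' black circle need not exist for the assertion to hold serves as a useful sanity check, since the proof should ultimately depend only on the combinatorics of the twelve intersection points of the six outer circles and not on the existence of any auxiliary circle through them.
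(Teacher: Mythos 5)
Your proposal is correct in substance and rests on the same mechanism as the paper's proof --- a telescoping product of cross-ratio-type quantities over the faces of an oriented bipartite polyhedral surface --- but it uses a different decomposition. The paper adjoins the two candidate points of concurrency to the twelve intersection points, obtaining a $14$-point, $12$-circle configuration with the combinatorics of a rhombic dodecahedron, assigns each face the cross-ratio (\ref{E50}), and uses the single identity (\ref{E51}), $\prod_{k=1}^{12}\crsf_k=1$: if eleven faces are circular, the twelfth must be. You instead keep only the twelve equatorial points, encode each concurrency as the reality of a multi-ratio of six points (this is exactly Theorem \ref{alastair1}, which you should cite or prove separately, and which the paper itself establishes by the same product trick via (\ref{E51b})), and relate the two multi-ratios through the six reality conditions attached to the black circles. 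This works: each black circle carries two ``black'' and two ``grey'' intersection points, the black--grey factors in the product of the six black cross-ratios cancel in pairs with suitable orientations, and what remains is precisely the product (or ratio) of the two multi-ratios, so one is real if and only if the other is. Your version is the paper's identity with the two polar ``caps'' pre-collapsed by means of (\ref{E51b}); it buys a statement purely in terms of the twelve visible points, at the cost of invoking the multi-ratio concurrency criterion as an extra lemma. The initial normalisation sending a point to infinity is harmless but superfluous, since cross-ratios and multi-ratios are already M\"obius invariants.

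One step should be repaired as stated: you cannot ``apply Miquel's theorem to each of the six hexagonal corners''. A corner consisting of two black circles together with one grey and one dashed circle is four circles, not a Miquel configuration of six circles and eight points, and Miquel's theorem is an implication about concyclicity, not an identity one can multiply. The six identities you actually need are simply the statements that the four intersection points lying on each black circle are concyclic, i.e.\ that the six face cross-ratios of type (\ref{E50}) associated with the black circles are real; these are the factors whose product telescopes. With that substitution your bookkeeping goes through, and, as you correctly note, nothing in the argument depends on the existence of the central black circle.
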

\begin{figure}
\centerline{\includegraphics[scale=0.4]{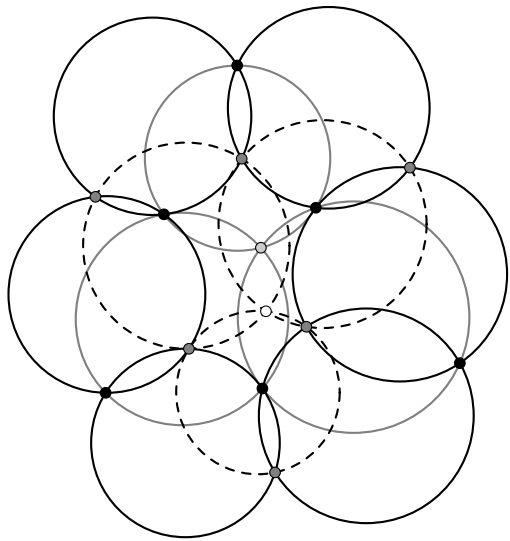}}
\caption{A rhombic dodecahedral point-circle configuration.}
\label{twelvecircles}
\end{figure}

\begin{proof}
The 14 points and 12 circles of the above configuration have the combinatorics of the 14 vertices and 12 quadrilateral faces of a rhombic dodecahedron (cf.\ Figure \ref{dodecahedron}). 
\begin{figure}
\centerline{\includegraphics[scale=0.5]{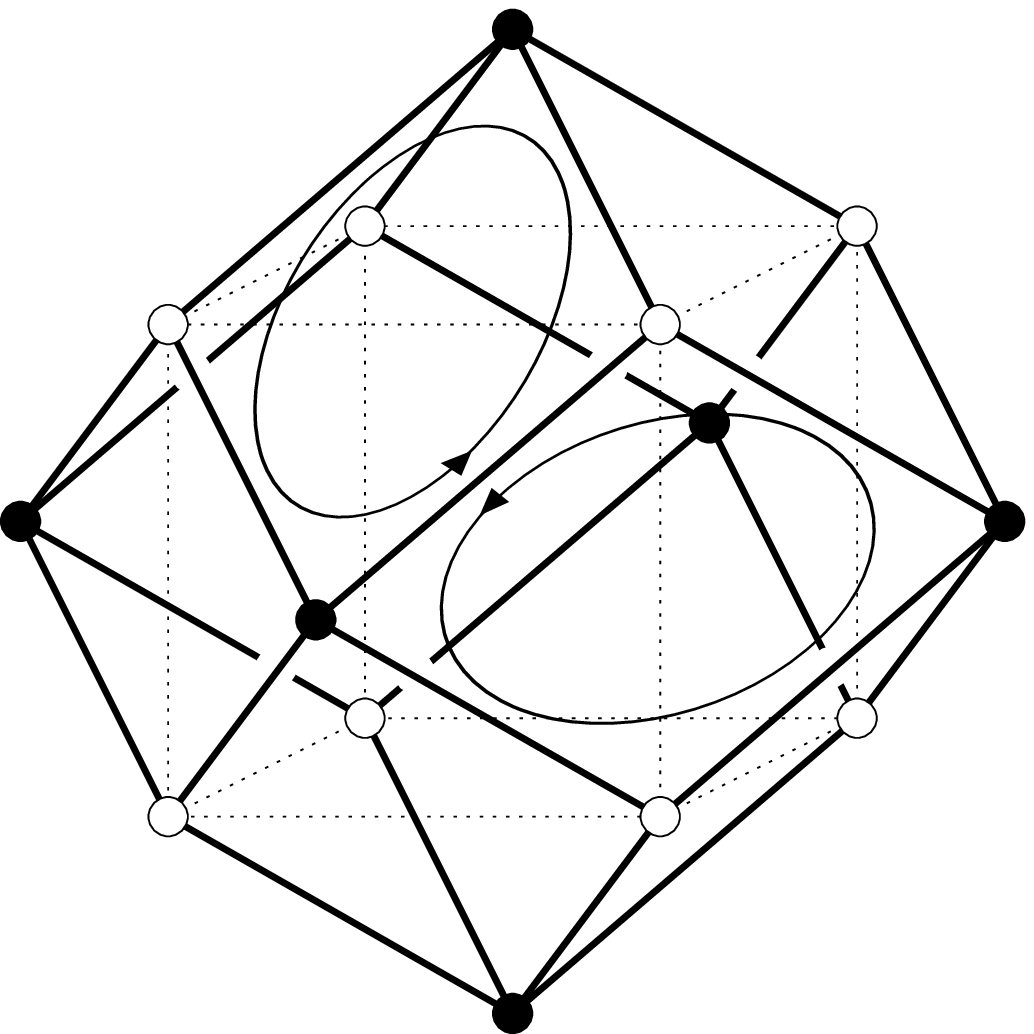}\qquad\qquad\qquad\includegraphics[scale=0.5]{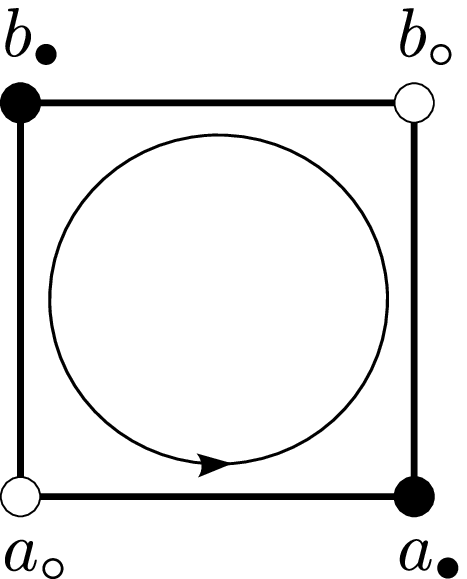}}
\caption{The oriented faces of a rhombic dodecahedron.}
\label{dodecahedron}
\end{figure}
For brevity, we refer to a face as being ``circular'' if the corresponding four points of the configuration are concyclic. Hence, the above theorem may be reformulated in the following manner: if 11 faces of a rhombic dodecahedral configuration of 14 points are circular then so is the remaining face. Since a rhombic dodecahedron is oriented and bipartite, we can associate a cross-ratio with the faces of the configuration. Thus, if we colour the vertices of an oriented face black and white as in Figure \ref{dodecahedron} then the cross-ratio of the face is defined as the cross-ratio
\bela{E50}
  \mathsf{cr} = \frac{(a_\circ - a_\bullet)(b_\circ - b_\bullet)}{(a_\bullet - b_\circ)(b_\bullet - a_\circ)}
\ela
of the cycle $a_\circ\rightarrow a_\bullet\rightarrow b_\circ\rightarrow b_\bullet\rightarrow a_\circ$ of points regarded as complex numbers. It is noted that $\mathsf{cr}$ is well-defined since the cycle which starts at $b_\circ$ leads to the same cross-ratio. It is now evident that the orientation of the faces of the rhombic dodecahedron  (i.e., the associated cycles) may be chosen in such a manner that the two terms in the product of all cross-ratios associated with any edge cancel each other (cf.\ Figure \ref{dodecahedron}) so that
\bela{E51}
  \prod_{k=1}^{12} {\mathsf{cr}}_k = 1.
\ela
Accordingly, we conclude that if 11 cross-ratios are real then so is the remaining cross-ratio. Since four points are concyclic if and only if the associated cross-ratio is real, this concludes the proof.
\end{proof}

\subsubsection{The evolution of one-dimensional Cauchy data}

We have demonstrated that a fundamental point-circle complex $(\psf,\csf)$ is uniquely determined by the points corresponding to the two-dimensional triangular slice $A_2^0$ of the $\Z^3$ lattice. However, these points are constrained by the condition that triples of circles associated with the triangles $\nabla$ and $\Delta$ respectively be concurrent. It is now readily seen that admissible points $\psf(A_2^0)$ are uniquely constructed by prescribing points on a star-shaped region which divides the triangular lattice into three sectors of 120 degrees as displayed in Figure \ref{star} (left).
\begin{figure}
\centerline{\includegraphics[scale=0.3]{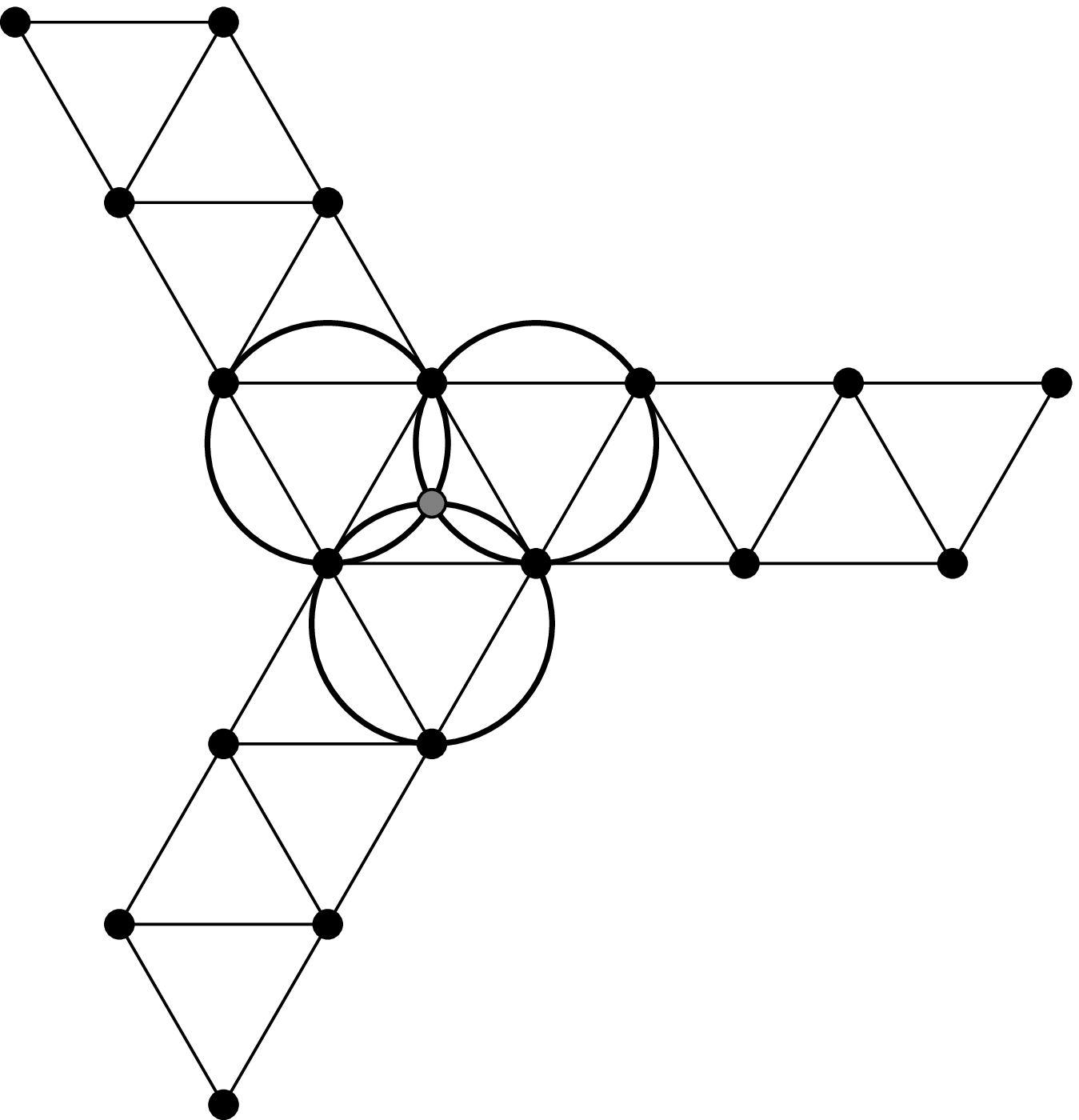}\includegraphics[scale=0.3]{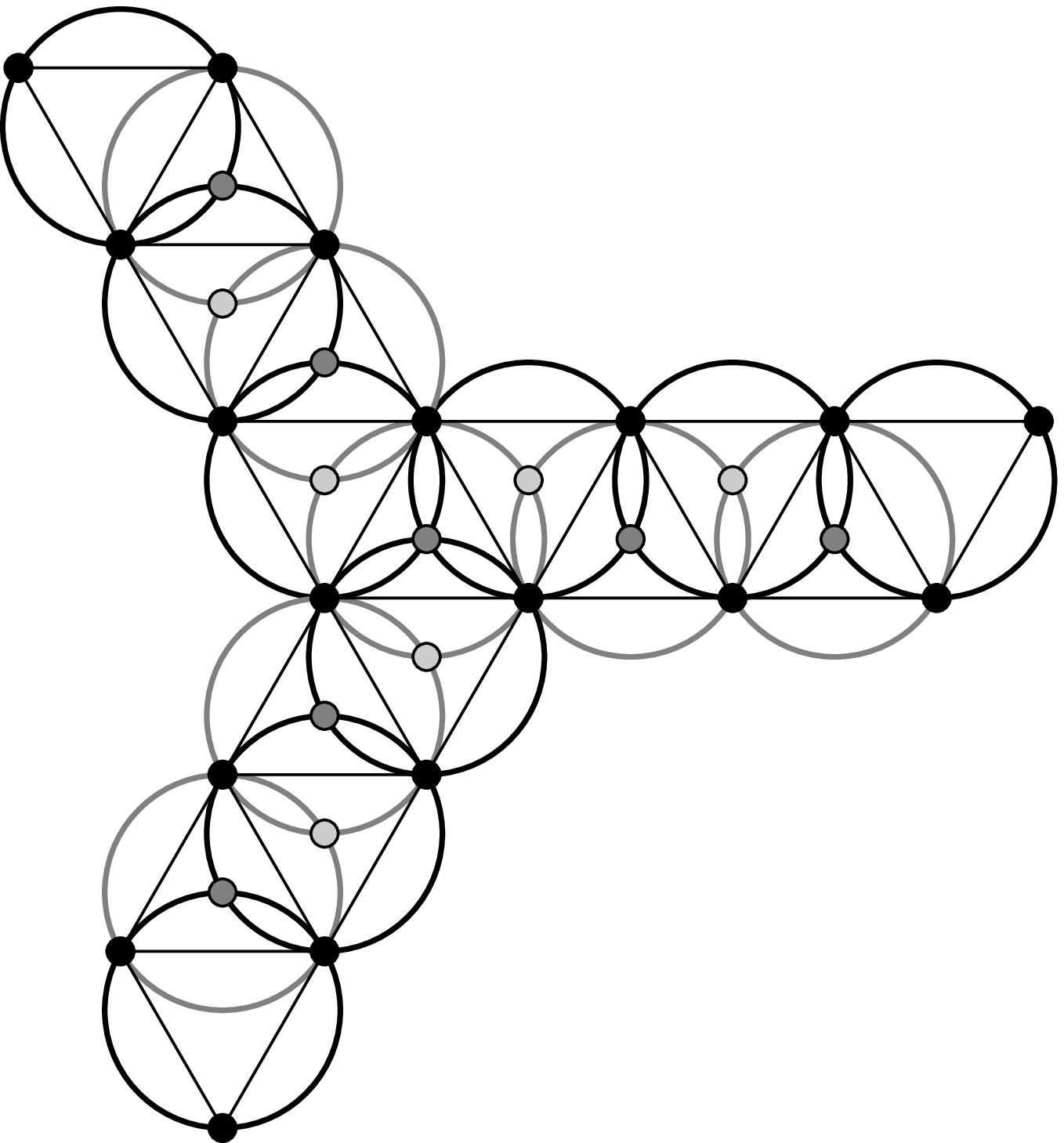}\includegraphics[scale=0.3]{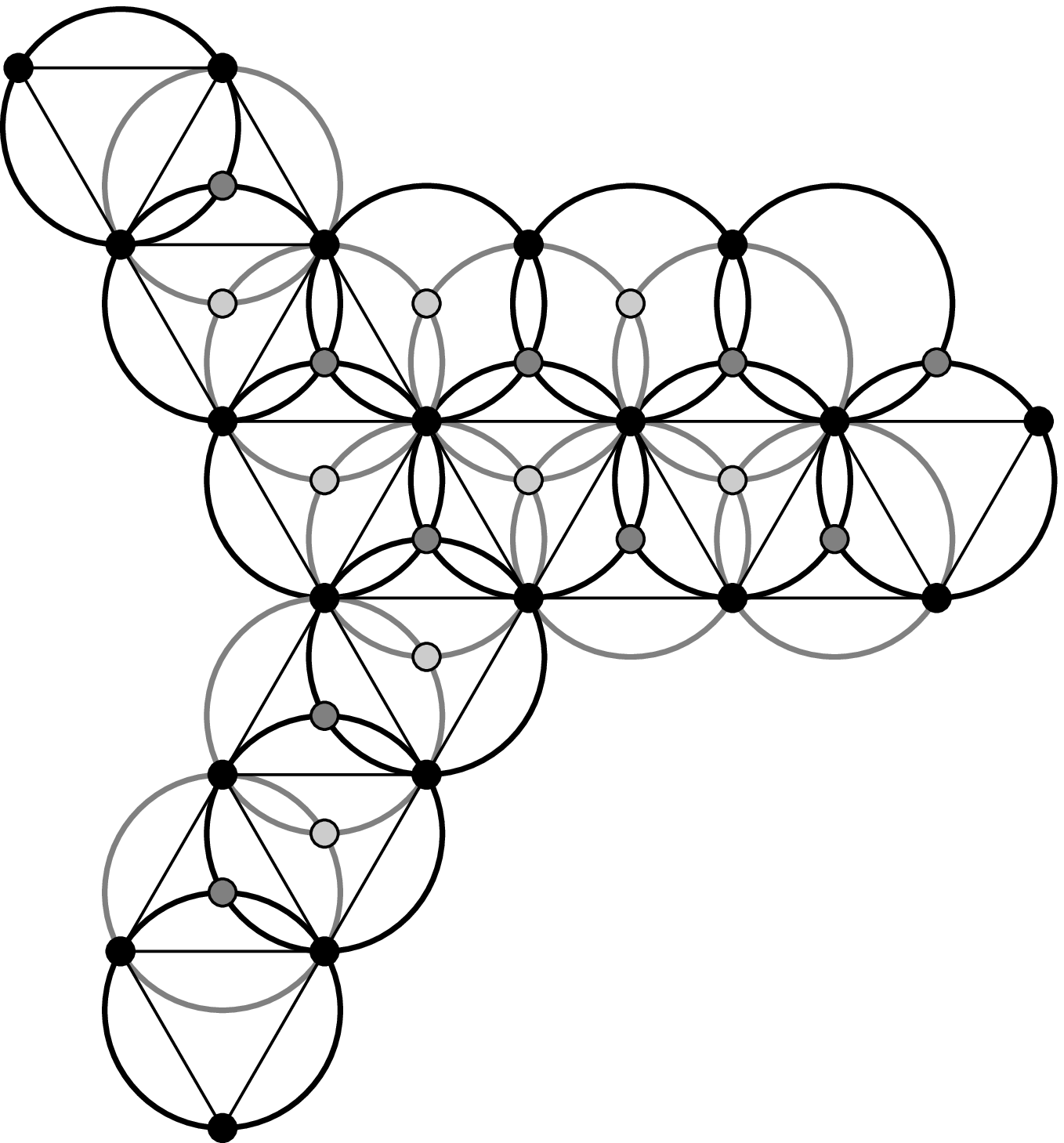}}
\caption{The construction of constrained Cauchy data for fundamental point-circle complexes.}
\label{star}
\end{figure}
The only constraint on the points is the concurrency of the three ``central'' (black) circles in $\csf(A_2^1)$, thereby defining the ``central'' (grey) point in $\psf(A_2^2)$. We begin by adding (black and grey) circles in $\csf(A_2^1)$ and $\csf(A_2^{-1})$ corresponding to the triangles $\nabla$ and $\Delta$ respectively as shown in Figure \ref{star} (middle). These determine, in turn, (grey and light grey) points in $\psf(A_2^2)$ and $\psf(A_2^{-2})$. We now observe that eight points of a rhombic configuration as depicted in Figure~\ref{rhombus} (left) and associated four circles and two points of intersection determine two additional circles and the remaining ninth point (Figure \ref{rhombus} (right)).
\begin{figure}
\centerline{\includegraphics[scale=0.5]{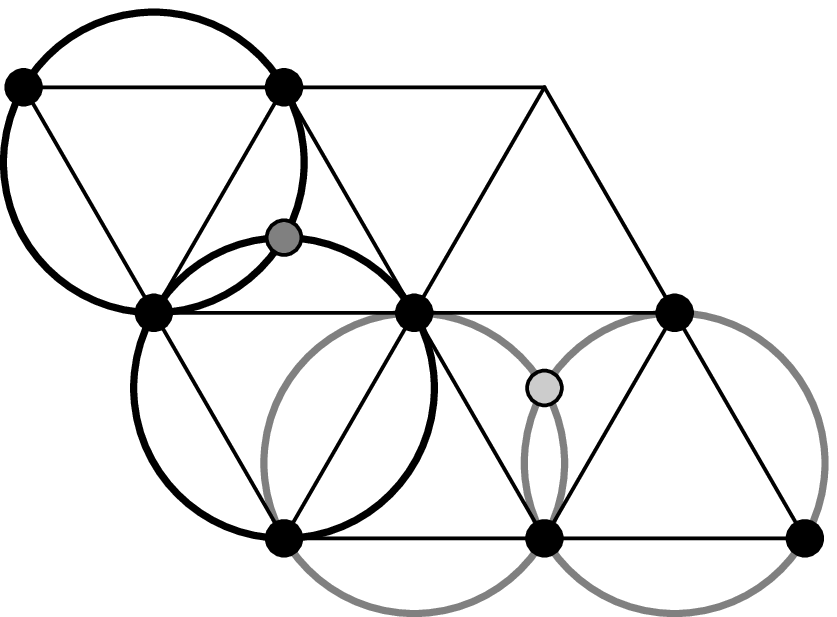}\qquad\qquad\includegraphics[scale=0.5]{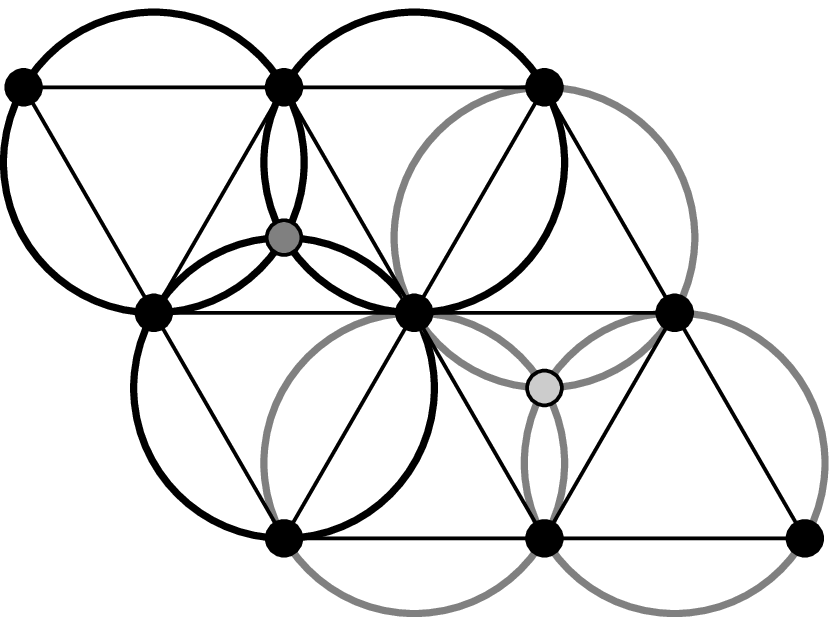}}
\caption{Eight (black) points (left) of a rhombic configuration uniquely determine a ninth (black) point (right).}
\label{rhombus}
\end{figure}
This procedure may now be applied to construct iteratively another ``horizontal'' row of (black) points in $\psf(A_2^0)$ together with the two associated rows of (grey and light grey) points in $\psf(A_2^2)$ and $\psf(A_2^{-2})$ as indicated in Figure \ref{star} (right) and, in fact, all points $\psf(A_2^0)$ in the corresponding sector. The other two sectors may be treated in an analogous manner. This may be summarised as follows.

\begin{theorem}
Fundamental point-circle complexes are determined by one-dimensional Cauchy data.
\end{theorem}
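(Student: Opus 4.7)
The plan is to reduce the theorem to Theorem \ref{theorem_cauchy} by showing that the constrained two-dimensional data $\psf(A_2^0)$, together with its forced companions $\csf(A_2^{\pm 1})$ and $\psf(A_2^{\pm 2})$, can be generated from one-dimensional free data. Concretely, I would fix a vertex $v_0 \in A_2^0$ and prescribe the points $\psf$ along the three straight lattice rays emanating from $v_0$ at mutual angles of $120^\circ$ (cf.\ Figure \ref{star}, left). These rays partition $A_2^0$ into three closed sectors, and the points specified on them constitute the one-dimensional Cauchy data. The six triangles of $A_2^0$ incident to $v_0$ then produce six circles in $\csf(A_2^{\pm 1})$, and I would impose as the single compatibility condition that the three circles of $\nabla$-type (respectively $\Delta$-type) around $v_0$ be concurrent, thereby defining the ``central'' points in $\psf(A_2^{\pm 2})$.

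Next, I would extend the data into each sector by induction along successive rows of vertices in $\psf(A_2^0)$ parallel to the bounding ray. The inductive step is the rhombic building block of Figure \ref{rhombus}: given the eight points of a rhombic configuration together with its four circumscribed circles and two auxiliary intersection points---all inherited from previously constructed rows---the ninth point and the two remaining circles are uniquely and consistently determined. The key point is that the \emph{next} central concurrency required to pass one step further out is automatically satisfied. This is precisely the content of Theorem \ref{rhombic_theorem}, whose twelve-circle cross-ratio identity (\ref{E51}) around a rhombic dodecahedron forces the twelfth circularity as soon as the other eleven hold. Consequently, once the initial central concurrency at $v_0$ is imposed, no further scalar condition need be added and the propagation through each sector proceeds without obstruction.

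The three sectors can be treated independently: their common boundary lies along the prescribed rays, and the points constructed in one sector never enter another. Once all of $\psf(A_2^0)$, $\csf(A_2^{\pm 1})$ and $\psf(A_2^{\pm 2})$ have been produced in this manner, the two-dimensional constrained Cauchy data required by Theorem \ref{theorem_cauchy} is in place, and that theorem yields the unique fundamental point-circle complex. The main obstacle I anticipate is verifying the consistency of the rhombic propagation---that the concurrency demanded at every new central vertex follows from the single central concurrency at $v_0$ via the $8\to 9$ step. Absent the twelve-circle theorem this step would fail, so Theorem \ref{rhombic_theorem} is the linchpin of the argument.
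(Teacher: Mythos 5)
Your overall strategy is the same as the paper's: prescribe points on the three rays of the star-shaped region, impose the central concurrency, fill in each $120^\circ$ sector row by row via the rhombic $8\to 9$ step, and then feed the resulting constrained two-dimensional data into Theorem \ref{theorem_cauchy}. The flaw is in what you identify as the linchpin. Theorem \ref{rhombic_theorem} is not a statement about the planar slice $A_2^0$ at all: its 14 points and 12 circles are distributed over the parallel slices $\psf(A_2^{-2}),\psf(A_2^0),\psf(A_2^2),\psf(A_2^4)$ and $\csf(A_2^{-1}),\csf(A_2^1),\csf(A_2^3)$, i.e., it is a genuinely three-dimensional (rhombic dodecahedral) configuration. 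Its role is exclusively to drive the propagation of the already-constrained two-dimensional Cauchy data in the third lattice direction --- it is the engine inside Theorem \ref{theorem_cauchy}, which you invoke separately at the end. It cannot be used to show that ``the next central concurrency is automatically satisfied'' within the planar sweep, because the eleven circularities it requires as hypotheses are not available inside a single $A_2$ slice.

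More importantly, no such automatic-satisfaction statement is needed for the planar step, and asserting one misrepresents how the $8\to 9$ step works. The ninth point $c^0$ is not constructed first and then checked against a concurrency; it is \emph{defined} by the two concurrency conditions that become complete when it is added --- equivalently, as the second point of intersection of the two auxiliary circles through $(c^1,a^0,p)$ and $(c^2,a^0,p')$, or algebraically as the unique solution of the linear nine-point equation (\ref{E52}) obtained from the pair of multi-ratio conditions (\ref{E51c}). Thus each new vertex of $A_2^0$ absorbs exactly the two real constraints (one $\nabla$-type, one $\Delta$-type) that are completed at that vertex, the count of two conditions against two coordinates closes, and the sweep proceeds with no residual condition to verify. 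If you repair the justification of the inductive step along these lines (and keep Theorem \ref{rhombic_theorem} where it belongs, inside Theorem \ref{theorem_cauchy}), your argument coincides with the paper's.
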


\begin{remark}
The above discussion suggests interpreting the constrained Cauchy data for fundamental point-circle complexes as two-dimensional configurations in their own right. As we have seen, this leads to the consideration of triangular lattices of points which are constrained by the concurrency of triples of circles associated with the same type of triangles. Equivalently, one could consider hexagonal circle packings which are constrained by the requirement that triples of ``missing'' circles be concurrent. In Figure \ref{cauchy_circle}, the hexagonal circle pattern consists of the black circles and black and grey points, while the ``missing'' circles are light grey and dashed. We have demonstrated that the concurrency of the grey circles implies the concurrency of the dashed circles. Hence, a third interpretation of the constrained Cauchy data is given by pairs of hexagonal circle patterns (black and grey circles) which are such that one circle pattern is constrained by the existence of the other. By construction, these circle patterns are geometrically and algebraically integrable as shown below. In the following, we refer to the constrained Cauchy data as {\em doubly hexagonal circle patterns}. It is noted that other types of integrable hexagonal circle patterns have been discussed in detail in \cite{BobenkoHoffmannSuris02,BobenkoHoffmann03}.
\end{remark}

\subsection{Integrability of fundamental point-circle complexes}

In order to interpret a fundamental point-circle complex as a particular fundamental complex in Lie circle geometry, we have to assign consistently an orientation to the circles. To this end, we consider an elementary cube of a given fundamental point-circle complex as in Figure \ref{cubes} (left). If we regard the points $\psf$ and $\psf_{12},\psf_{23},\psf_{13}$ as Lie circles of ``zero radius'' and choose an arbitrary orientation of the circles $\csf_1,\csf_2,\csf_3$ then there exist two Lie circles which ``touch'' the three  Lie circles  $\psf_{12},\psf_{23},\psf_{13}$. In the M\"obius picture, these correspond to the circle $\csf_{123}$ endowed with the two possible orientations. Since one of the two Lie circles is correlated to the Lie circle $\psf$ in the sense of Definition \ref{correlation_def}, this determines the orientation of the circle $\csf_{123}$. This leads to the following theorem.

\begin{theorem}\label{fund_point_circle}
Any fundamental point-circle complex $(\psf,\csf)$ may be interpreted as a fundamental circle complex by prescribing the orientation of the circles associated with the one-dimensional Cauchy data depicted in Figure \ref{star} (middle). The orientation of the remaining circles is determined by correlation. 
\end{theorem}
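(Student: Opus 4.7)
The plan is to construct the required orientations by combining the one-dimensional Cauchy problem for fundamental point-circle complexes with iterated application of the correlation mechanism of Corollary \ref{corollary}, and then to verify the requisite global consistency.

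As a first step, I would lift the fundamental point-circle complex $(\psf,\csf)$ to a configuration in the Lie quadric $Q^3$ by identifying each point $\psf$ with the corresponding null point of the M\"obius subquadric $Q^2\subset Q^3$ and each non-oriented circle $\csf$ with the pair of antipodal points of $Q^3\setminus Q^2$ representing its two possible orientations. Since every edge of $\Z^3$ joins a point to a circle that contains it, the incidence translates into $\langle\hat{\Vsf},\hat{\Vsf}_l\rangle=0$ along every edge \emph{independently} of the choice of orientation. Hence the condition that neighbouring Lie circles be in oriented contact is built in automatically.

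Next, I would orient the circles corresponding to the one-dimensional Cauchy data shown in Figure \ref{star} (middle) arbitrarily, and propagate these orientations cube-by-cube. For any elementary cube of the point-circle complex in which seven of the eight Lie circles have already been oriented, the non-oriented representative at the remaining vertex is prescribed by the point-circle structure itself. According to Theorem \ref{tim} and Corollary \ref{corollary}, the correlation associated with the hexagon formed by six of the neighbouring Lie circles sends the seventh to a unique oriented Lie circle in the linear line complex $\mathcal{L}$, and by Definition \ref{correlation_def} this is precisely the correlated Lie circle. Since its underlying non-oriented representative must coincide with the one prescribed by the point-circle complex, the correlation amounts to fixing an orientation for the missing circle (or simply returns the null point itself when the eighth vertex is a point of the plane).

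The principal obstacle is global consistency: different orders in which the cubes may be processed must yield the same orientation at each vertex. I would dispose of it by invoking the multi-dimensional consistency of the $M$-system proved in Section 2 together with Theorem \ref{tim}, which confines the correlation to $\mathcal{L}$; consistency then implies that the assembled map $\Z^3\to Q^3$ is a fundamental line complex whose lines all lie in $\mathcal{L}$, that is, a fundamental circle complex in the sense of Section 4. Its natural projection, obtained by forgetting orientation on circles and identifying null points of $Q^2$ with points of $\R^2$, recovers the given point-circle complex $(\psf,\csf)$, as required.
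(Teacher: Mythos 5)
Your overall strategy -- lift to the Lie quadric, orient the Cauchy data, propagate cube-by-cube via the correlation -- matches the paper's. But the step you dispose of in one sentence is precisely where the real work lies, and your argument for it does not go through. The difficulty is not merely that different processing orders might disagree; it is that the point-circle complex contains two types of elementary cubes, and in the cubes where the eighth vertex carries a \emph{point}, the correlation maps the opposite oriented circle $\csf$ to one of the \emph{two} solutions of the Apollonius problem for the triple $\csf_{12},\csf_{23},\csf_{13}$. Your claim that "its underlying non-oriented representative must coincide with the one prescribed by the point-circle complex" and that the correlation "simply returns the null point itself when the eighth vertex is a point" is exactly the assertion that needs proof: only one of the two Apollonius solutions is the zero-radius Lie circle, and nothing a priori forces the correlation to select it. The paper isolates this as a separate Lemma (the $8$-adjacent-cubes statement accompanying Figure \ref{correlation_circle}) and proves it by a genuine computation: the explicit radius formula (\ref{Z1}), the observation that the ratio $r'_{123}/r_{123}$ is a rational function of the Cauchy data with range $\{-1,1\}$, hence constant, and an evaluation on a regular configuration to pin the constant to $1$.

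Your appeal to the multi-dimensional consistency of the $M$-system to close this gap is circular. That consistency statement concerns solutions of the symmetric $M$-system, i.e., objects already known to be fundamental circle complexes; here you start from a purely incidence-geometric point-circle complex and are trying to \emph{establish} that it underlies such a solution. Until the orientation assignment is shown to be globally coherent, you have no $M$-system solution to whose consistency you could appeal. A secondary inaccuracy: the orientations of the Cauchy-data circles in Figure \ref{star} (middle) cannot all be chosen arbitrarily -- the paper's proof notes that the central circle in $\csf(A_2^{-1})$ is constrained to be correlated to the central point in $\psf(A_2^{2})$, since these are opposite vertices of the central elementary cube.
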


\begin{proof}
The orientation of the circles in Figure \ref{star} (middle) may be chosen arbitrarily except for the ``central'' (grey) circle in $\csf(A_2^{-1})$ which is required to be correlated to the ``central'' (grey) point in $\psf(A_2^{2})$. The orientation of the circles $\csf(A_2^{-1}),\csf(A_2^1)$ and $\csf(A_2^3)$ is then uniquely determined by correlation. The following lemma implies that the Lie circles correlated to the Lie circles $\csf(A_2^1)$ are indeed the points $\psf(A_2^4)$. The orientation of the circles $\csf(A_2^5)$ is determined by correlation and the same argument may be used to conclude that the points $\psf(A_2^6)$ are correlated to the circles $\csf(A_2^3)$. Iteration of this process in both forward and backward directions gives rise to a unique orientation of all remaining circles. 
\end{proof}

In the following, we refer to the points and circles of an elementary cube of a fundamental point-circle complex as being correlated if opposite pairs of points and circles regarded as Lie circles are correlated.

\begin{lemma}
If the points and circles of 7 elementary cubes of 8 adjacent elementary cubes of a fundamental point-circle complex  consisting of 14 points and 13 circles (cf.\ Figure \ref{correlation_circle}) are correlated then the points and circles of the remaining eighth elementary cube are likewise correlated.
\end{lemma}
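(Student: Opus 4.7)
The plan is to reduce this consistency statement to the multi-dimensional consistency of the real symmetric $M$-system. By the theorem characterizing fundamental circle complexes as being governed by the real symmetric $M$-system (\ref{E12}), together with Corollary \ref{corollary}, an elementary cube of a fundamental circle complex is correlated in the sense of Definition \ref{correlation_def} if and only if the corresponding Lie-circle data on that cube satisfies (\ref{E12}). Interpreting points as Lie circles of zero radius, the claim becomes the algebraic assertion that if the symmetric $M$-system holds on $7$ of the $8$ elementary cubes of a $2\times 2\times 2$ block in $\Z^3$, then it holds on the $8$th cube as well.

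First, I would set up the combinatorics: the $14$ points and $13$ circles correspond to the $27$ vertices of a $3\times 3\times 3$ sublattice of $\Z^3$, and the $8$ adjacent elementary cubes are the $2\times 2\times 2$ sub-blocks meeting at the central vertex. Each cube's correlation condition then amounts to a canonical sign choice in the $M$-system at its $8$ vertices, corresponding precisely to the $\pm$ ambiguity of $\sqrt{\tau_4\tau_5-\tau\tau_{45}}$ noted in the remark following equation (\ref{E28}). Second, I would invoke the multi-dimensional consistency of the $M$-system established in \cite{BobenkoSchief15}: since the $M$-system can be consistently propagated across any $\Z^N$ lattice, the $8$ cube-level relations on a $2\times 2\times 2$ block are not independent. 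Correlation on $7$ cubes pins down all $27$ pieces of $M$-system data, and the consistency then forces the remaining cube to satisfy the same relations, i.e., to be correlated.

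The main obstacle will be to verify carefully that the point-circle reduction---encoded by intersecting the Lie quadric $Q^3$ with a hyperplane of signature $(3,1)$---is compatible with the $M$-system propagation across the $2\times 2\times 2$ block. Concretely, one must ensure that the zero-radius condition at the $14$ even-parity vertices remains preserved as one propagates in each of the three coordinate directions so that the $M$-system consistency descends to the point-circle setting. This compatibility should follow from the geometric construction of fundamental point-circle complexes via the rhombic dodecahedral identity (Theorem \ref{rhombic_theorem}), which is itself a two-dimensional avatar of precisely the same multi-dimensional consistency, together with the well-posedness of the Cauchy problem established in Theorem \ref{theorem_cauchy}.
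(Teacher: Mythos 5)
There is a genuine gap. Your argument hinges on the claim that ``correlation on 7 cubes pins down all 27 pieces of $M$-system data, and the consistency then forces the remaining cube to satisfy the same relations.'' This presupposes exactly what has to be proved: that the geometric data of the seven correlated cubes (points, freely chosen circle centres, and --- crucially --- freely chosen \emph{signs} of the radii $r_1,r_2,r_3,r_{113},r_{122},r_{233}$) is the restriction of a \emph{single} solution of the symmetric $M$-system (\ref{E12}) on the $3\times3\times3$ block. Nothing in the hypothesis of the lemma supplies such a global matrix $\calM$; the correlation condition on each individual cube only constrains the $2\times2$ submatrix data $\hat{\calM}$ visible through the Lie coordinates, and the compatibility of these local constraints across the eight overlapping cubes is precisely the content of the lemma. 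Moreover, ``multi-dimensional consistency'' is the wrong tool here: it concerns the extension of the system to $\Z^N$, $N\ge 4$ (consistency around the hypercube), whereas the configuration of Figure \ref{correlation_circle} consists of eight cubes surrounding a common vertex \emph{inside} $\Z^3$. A statement of the form ``the system on 7 of the 8 cubes of a $2\times2\times2$ block implies it on the eighth'' does not follow from hypercube consistency, and for the dCKP equation the difficulty is concentrated in the sign ambiguity of $\tau_{123}$ noted after (\ref{E28}) --- which your proposal names but does not resolve.

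For contrast, the paper's proof is a direct computation that isolates and settles exactly this sign issue. The underlying non-oriented incidence (the existence of the point $\psf_{112233}$) is already guaranteed by Theorem \ref{rhombic_theorem}; what remains is to show that the orientation of the circle correlated to $\psf_{112233}$ in the eighth cube agrees with the orientation of $\csf_{123}$ fixed by the first cube. This is done via the explicit formula (\ref{Z1}) for the signed radius of a correlated circle, which yields the eight product relations (\ref{Z1b}); dividing suitable products gives $r'_{123}/r_{123}=F$ as a rational function (\ref{Z2}) of the free Cauchy data. Since $F$ takes values only in $\{-1,1\}$, it is constant, and evaluation on a symmetric configuration of unit circles gives $F\equiv1$. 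If you want to salvage an $M$-system-based argument, you would need to prove (i) that every generic seed consisting of one correlated cube together with the three extra circles, including all sign choices, is realised by a symmetric $M$-system solution, and (ii) that the correlated completion of seven vertices of a cube is unique, so that the $M$-system solution reproduces the given seven cubes and hence forces the eighth; step (i) is a nontrivial surjectivity statement that your proposal does not address.
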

\begin{figure}
\centerline{\includegraphics[scale=0.5]{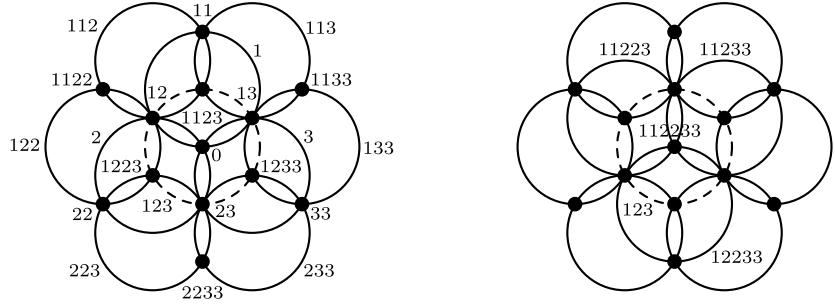}}
\caption{Points and circles of 8 adjacent elementary cubes of a fundamental point-circle complex. If the points and circles of 7 elementary cubes are correlated then the points and circles of the remaining eighth elementary cube are likewise correlated.}
\label{correlation_circle}
\end{figure}

\begin{proof}
Given a point $\psf$ in the plane with complex coordinate $z$, we may arbitrarily prescribe the centres $z_1,z_2,z_3$ of circles $\csf_!,\csf_2,\csf_3$. The requirement that these circles pass through $\psf$ determines the squares $r_1^2,r_2^2,r_3^2$ of their radii. The signs of the radii correspond to their orientation and may be chosen arbitrarily. The additional points of intersection of the pairs of circles $\csf_k,\csf_l$ are denoted by $\psf_{kl}$ as indicated in Figure \ref{correlation_circle}, wherein only the indices are shown and the point $\psf$ is labelled by $0$. These three points determine a circle $\csf_{123}$ up to its orientation. The latter is selected by the assumption that $\csf_{123}$ is correlated to $\psf$. If one considers the Lie geometric representation of the points and circles of this elementary cube of a point-circle complex as set down in Section 6 then, using the planarity property, one may explicitly determine the corresponding lines $\lsf,\ldots,\lsf_{123}$ in $\R\P^3$. Specifically, it may be directly verified by computer algebra that the radius $r_{123}$ is given by
\bela{Z1}
  r_{123} = r_1r_2r_3\frac{\left|\bear{ccc}1&z_1&\bar{z}_1\\ 1&z_2&\bar{z}_2\\ 1&z_3&\bar{z}_3\ear\right|}{
\left|\bear{cccc}1&z&\bar{z}&|z|^2\\ 1&z_1&\bar{z}_1&|z_1|^2\\ 1&z_2&\bar{z}_2&|z_2|^2\\  1&z_3&\bar{z}_3&|z_3|^2\ear\right|}.
\ela
The interested reader is referred to the corresponding Maple code which is available on the authors' websites. Now, circles $\csf_{113},\csf_{122},\csf_{233}$ are obtained by specifying their centres $z_{113},z_{122},z_{233}$. Once again, the signs of the radii $r_{113},r_{122},r_{233}$ may be prescribed. The additional points of intersection of the circle $\csf_{113}$ and the circles $\csf_1,\csf_{123}$ are denoted by $\psf_{11},\psf_{1123}$ respectively (cf.\ Figure \ref{correlation_circle}). Points $\psf_{22},\psf_{1223}$ and $\psf_{33},\psf_{1233}$ are defined in an analogous manner. The points $\psf_{11},\psf_{12},\psf_{1123}$ give rise to a circle $\csf_{112}$, the radius $r_{112}$ of which is given by the analogue of relation (\ref{Z1}). Similarly, the radii $r_{223}$ and $r_{133}$ of circles $\csf_{223}$ and $\csf_{133}$ are obtained. Accordingly, the correlated points and circles of four elementary cubes of a fundamental point-circle complex are known. Moreover, if $\psf_{1122},\psf_{2233},\psf_{1133}$ denote the additional points of intersection of the pairs of circles $(\csf_{112},\csf_{122}),(\csf_{223},\csf_{233}),(\csf_{133},\csf_{113})$ respectively then the radii $r_{11223},r_{12233},r_{11233}$ of circles $\csf_{11223},\csf_{12233},\csf_{11233}$ are given by the respective relations of the type (\ref{Z1}) and another three elementary cubes of correlated points and circles have been constructed. Finally, Theorem \ref{rhombic_theorem} implies that the circles $\csf_{11223},\csf_{12233},\csf_{11233}$ intersect in a point $\psf_{112233}$. However, {\it a priori}, it is not evident that the orientation of the circle $\csf_{123}'$ correlated to the point $\psf_{112233}$ coincides with the orientation of the circle $\csf_{123}$. In order to show this, we observe that the key relation (\ref{Z1}) may be formulated as
\bela{Z1a}
  r_{123}r_1r_2r_3 = f(z,z_1,z_2,z_3,\mbox{c.c.})
\ela
since the squares $r_i^2$ are known functions of the indicated arguments of $f$. Hence, the relations between the radii associated with the eight elementary cubes are of the form
\bela{Z1b}
 \bear{rlrl}
  r_{123}r_{1}r_{2}r_{3} = & f,\quad   &r_{123}r_{112}r_{113}r_{1} = & g_1\as
  r_{11223}r_{112}r_{122}r_{123} = & f_{12},\quad   &r_{223}r_{122}r_{123}r_{2} = & g_2\as
  r_{12233}r_{123}r_{223}r_{233} = & f_{23},\quad   &r_{233}r_{123}r_{133}r_{3} = & g_3\as
  r_{11233}r_{113}r_{123}r_{133} = & f_{13},\quad   &r_{12233}r_{11223}r_{11233}r_{123}' = & g_{123}.
\ear
\ela
Since the arguments of the right-hand sides of the above may be expressed in terms of the variables $z,z_1,z_2,z_3,z_{113},z_{122},z_{233}$ and their complex conjugates, the ratio of the product of (\ref{Z1b})$_{2,4,6,8}$ and the product of (\ref{Z1b})$_{1,3,5,7}$ reduces to
\bela{Z2}
  \frac{r'_{123}}{r_{123}} = F(z,z_1,z_2,z_3,z_{113},z_{122},z_{233},\mbox{c.c.})
\ela
which is a rational function of the indicated Cauchy data. On the other hand, the range of $F$ is $\{-1,1\}$ so that $F \equiv 1$ or $F \equiv -1$.  Consequently, it is sufficient to choose any convenient set of Cauchy data to determine the value of $F$. Indeed, a regular configuration of points and circles for which all unsigned radii of the circles are 1 leads, for instance, to $r_{123}=r_1r_2r_3$ by virtue of $|z_i-z|^2=1$ so that $f=\cdots = g_{123}=1$. Accordingly, $F\equiv 1$ which concludes the proof.
\end{proof}

Even though it is evident that fundamental point-circle complexes are described in terms of particular solutions of the symmetric $M$-system, it is non-trivial to derive the corresponding reduction of the $M$-system. An alternative approach to the algebraic description of  fundamental point-circle complexes is to analyse both geometrically and algebraically the underlying doubly hexagonal circle patterns. Here, we briefly derive the corresponding discrete integrable system. Thus, by construction, these patterns exist if and only if the points $\psf(A_2^0)$ are chosen in such a manner that the black and grey circles associated with the triangles $\nabla$ and $\Delta$ respectively of the triangular configurations displayed in Figure \ref{multiratio} are concurrent.
\begin{figure}
\centerline{\includegraphics[scale=0.5]{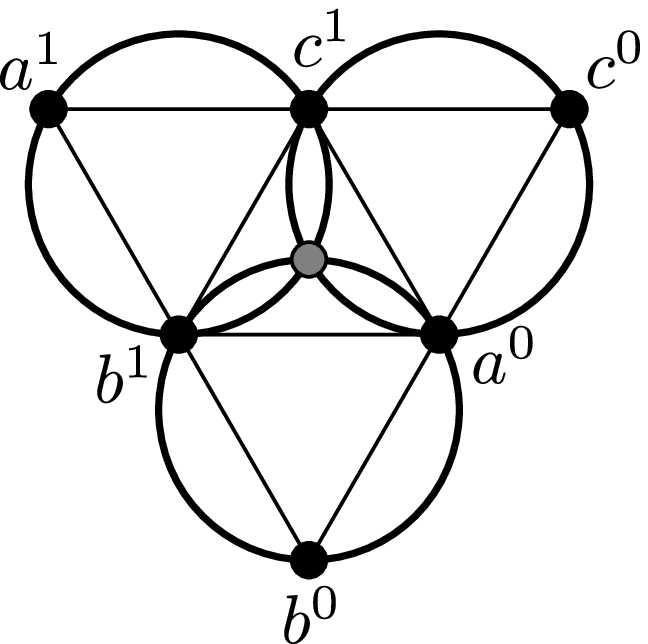}\qquad\qquad\includegraphics[scale=0.5]{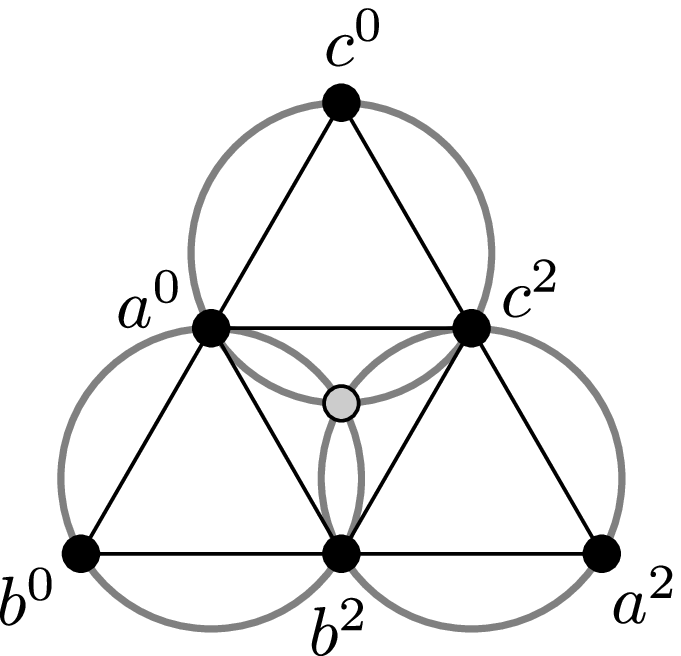}}
\caption{The building blocks of the doubly hexagonal circle patterns corresponding to the constrained Cauchy data of fundamental point-circle complexes.}
\label{multiratio}
\end{figure}
In algebraic terms, the concurrency of these triples of circles may be formulated in the following manner.

\begin{theorem}\label{alastair1}
The three circles passing through triples of points $(a^0,b^0,b^1)$, $(b^1,a^1,c^1)$ and $(c^1,c^0,a^0)$ belonging to a hexahedron $[a^0,b^0,b^1,a^1,c^1,c^0]$ in the plane (cf.\ Figure \ref{ninepoint} (left)) meet in a point $p$ if and only if the multi-ratio of complex numbers
\bela{E51a}
  \Msf(a^0,b^0,b^1,a^1,c^1,c^0) = \frac{(a^0-b^0)(b^1-a^1)(c^1-c^0)}{(b^0-b^1)(a^1-c^1)(c^0-a^0)}
\ela
is real.
\end{theorem}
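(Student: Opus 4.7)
The plan is to reduce the assertion to a purely algebraic identity linking three cross-ratios and the multi-ratio $\Msf$, exploiting the classical fact that four points in the plane lie on a common circle (or line) if and only if their cross-ratio is real. The same tool was already used in the proof of Theorem~\ref{rhombic_theorem}, so the strategy is very much in the spirit of the preceding material.

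For the forward direction, I would assume that the three circles meet at a point $p$ and consider, for each of the three quadruples of concyclic points, a carefully chosen cyclic cross-ratio, namely
\begin{equation*}
 \crsf_1 = \frac{(a^0-b^0)(b^1-p)}{(b^0-b^1)(p-a^0)},\quad
 \crsf_2 = \frac{(b^1-a^1)(c^1-p)}{(a^1-c^1)(p-b^1)},\quad
 \crsf_3 = \frac{(c^1-c^0)(a^0-p)}{(c^0-a^0)(p-c^1)}.
\end{equation*}
The key observation is that when these three cross-ratios are multiplied, the factors involving $p$ pair up as $(b^1-p)/(p-b^1)$, $(c^1-p)/(p-c^1)$ and $(a^0-p)/(p-a^0)$, each contributing a factor of $-1$. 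What remains is precisely $\Msf$, so $\crsf_1\crsf_2\crsf_3 = -\Msf$. Since each $\crsf_i$ is real by concyclicity, $\Msf$ is real.

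For the converse, I would simply reverse the construction: let $p$ denote the second intersection of the two circles through $(a^0,b^0,b^1)$ and $(b^1,a^1,c^1)$ respectively (the first intersection being $b^1$). Then $\crsf_1$ and $\crsf_2$ are automatically real, and the identity $\crsf_1\crsf_2\crsf_3 = -\Msf$ combined with the assumed reality of $\Msf$ forces $\crsf_3$ to be real. Hence $a^0,c^0,c^1,p$ are concyclic, i.e.\ $p$ lies on the third circle, and all three circles are concurrent at $p$.

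The routine parts are the algebraic cancellation and the citation of the concyclicity criterion; the one point requiring care is genericity, specifically the assumption that the first two circles meet in two distinct points so that $p\neq b^1$ is well defined, and that none of the denominators in the cross-ratios vanish. These are precisely the conditions implicit in the hypothesis that the six points form a hexahedron in general position, so I would dispatch them in a short remark rather than as a separate case analysis. I do not expect any essential obstacle beyond this; the entire argument reduces to exhibiting and reading off the multi-ratio identity above.
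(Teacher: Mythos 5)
Your proof is correct and takes essentially the same approach as the paper: your identity $\crsf_1\crsf_2\crsf_3=-\Msf$ is exactly the paper's relation $\Msf\,\crsf(p,a^0,b^0,b^1)\,\crsf(p,b^1,a^1,c^1)\,\crsf(p,c^1,c^0,a^0)=-1$ (your cross-ratios are the reciprocals of the ones used there), obtained from the same edge-cancellation on the closed polyhedral surface with one hexagonal and three quadrilateral faces. Both arguments then conclude identically by taking $p$ to be the second intersection point of the circles through $(a^0,b^0,b^1)$ and $(b^1,a^1,c^1)$.
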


\begin{figure}
\centerline{\includegraphics[scale=0.5]{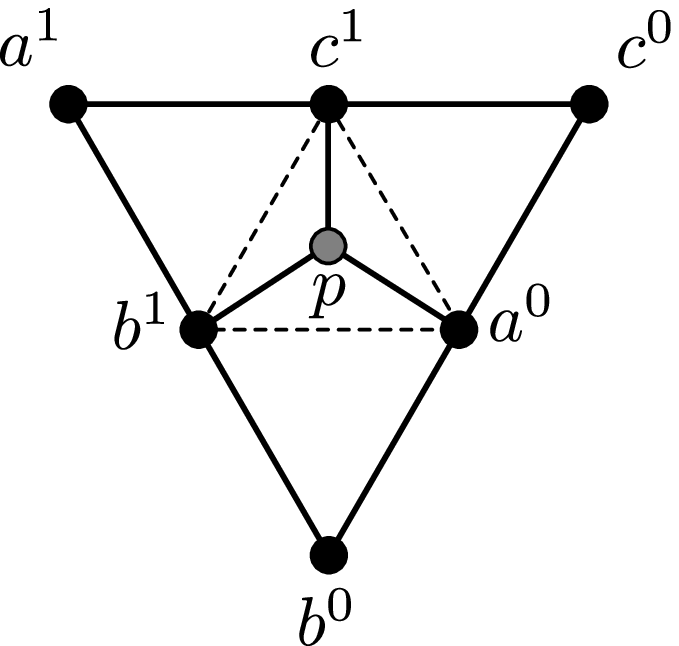}\qquad\qquad\includegraphics[scale=0.5]{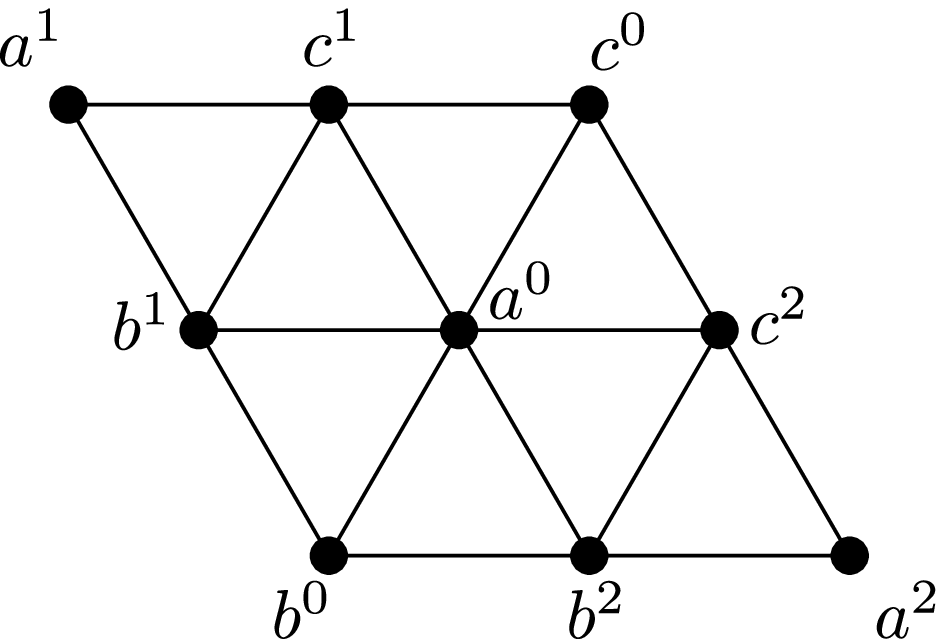}}
\caption{The labelling of the points of (half) a rhombic configuration.}
\label{ninepoint}
\end{figure}

\begin{proof}
Once again, we regard Figure \ref{ninepoint} (left) as a (combinatorial) closed polyhedral surface with a hexagonal face $[a^0,b^0,b^1,a^1,c^1,c^0]$ and three quadrilateral faces $[p,a^0,b^0,b^1]$, $[p,b^1,a^1,c^1]$, $[p,c^1,c^0,a^0]$ so that
\bela{E51b}
\mathsf{M}(a^0,b^0,b^1,a^1,c^1,c^0)\mathsf{cr}(p,a^0,b^0,b^1)\mathsf{cr}(p,b^1,a^1,c^1)\mathsf{cr}(p,c^1,c^0,a^0) = -1.
\ela
Hence, if $p$ is defined as the intersection of the circles passing through the triples of points $(a^0,b^0,b^1)$ and $(b^1,a^1,c^1)$ then the cross-ratio $\mathsf{cr}(p,c^1,c^0,a^0)$ is real if and only if the multi-ratio $\mathsf{M}(a^0,b^0,b^1,a^1,c^1,c^0)$ is real.
\end{proof}

\begin{remark}
The above theorem serves as a proof of the classical Miquel theorem discussed in Section 6.3. Indeed, if the circles   
passing through the triples of points $(a^0,b^0,b^1)$, $(b^1,a^1,c^1)$ and $(c^1,c^0,a^0)$ are concurrent then the above multi-ratio is real. However, a simple permutation of the arguments of the latter leads to its reciprocal and hence $\Msf(b^0,b^1,a^1,c^1,c^0,a^0)$ is real so that the three circles passing through the triples of points $(b^0,b^1,a^1)$, $(a^1,c^1,c^0)$ and $(c^0,a^0,b^0)$ are likewise concurrent.
\end{remark}

Doubly hexagonal circle patterns are now characterised in the following manner.

\begin{corollary}
The points $p(A_2^0)$ define doubly hexagonal circle patterns if and only if the multi-ratios
\bela{E51bc}
 \Msf(a^0,b^0,b^1,a^1,c^1,c^0),\quad \Msf(a^0,b^0,b^2,a^2,c^2,c^0)
\ela
associated with triangular configurations of the two types displayed in Figure \ref{multiratio} are real.
\end{corollary}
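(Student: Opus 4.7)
The plan is to reduce the corollary to two direct applications of Theorem \ref{alastair1}. Recall that, by construction in the preceding discussion, the points $\psf(A_2^0)$ define a doubly hexagonal circle pattern precisely when, for every triangular building block of the two types displayed in Figure \ref{multiratio}, the three associated (black or grey) circles through the appropriate triples of points are concurrent. So the task is simply to translate each of these concurrency conditions into the statement that a multi-ratio is real.

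First, I would pick one building block of the first type and label its six ``outer'' points as $a^0,b^0,b^1,a^1,c^1,c^0$ in cyclic order around the hexagon, matching the labelling used in Theorem \ref{alastair1} and Figure \ref{ninepoint} (left). The three circles whose concurrency is being asserted pass through the triples $(a^0,b^0,b^1)$, $(b^1,a^1,c^1)$ and $(c^1,c^0,a^0)$, which is exactly the hypothesis of Theorem \ref{alastair1}. Therefore these three circles meet in a common point if and only if
\[
  \Msf(a^0,b^0,b^1,a^1,c^1,c^0) \in \R.
\]
The same argument applied to the second type of building block, with the cyclic labelling $a^0,b^0,b^2,a^2,c^2,c^0$, yields the corresponding reality condition for $\Msf(a^0,b^0,b^2,a^2,c^2,c^0)$.

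Finally, I would note that these two types of triangular configurations are the only constraints imposed on $\psf(A_2^0)$ in the definition of doubly hexagonal circle patterns (the two types arise from the two different orientations $\nabla$ and $\Delta$ of triangles in the triangular lattice $A_2^0$, together with their images in the neighbouring slices $A_2^{\pm 1}$). Hence the conjunction of the two reality conditions is equivalent to the full system of concurrency constraints, and the corollary follows. The only conceivable obstacle is purely notational, namely verifying that the cyclic labelling of the six hexagon vertices in each building block of Figure \ref{multiratio} matches the labelling convention of Theorem \ref{alastair1}; this is a bookkeeping check and presents no substantive difficulty.
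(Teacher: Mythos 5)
Your proposal is correct and is precisely the argument the paper intends: the corollary is stated without an explicit proof because it is an immediate consequence of Theorem \ref{alastair1} applied once to each of the two types of triangular building blocks, combined with the preceding observation that doubly hexagonal circle patterns are characterised by the concurrency of exactly those triples of circles. Your labelling check is the only content, and you have handled it as the paper does.
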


If we now focus on the nine points of a rhombic configuration of the type encountered in the preceding and displayed in Figure \ref{ninepoint} (right) then the two associated multi-ratio conditions read 
\bela{E51c}
 \bear{rl}
  \Msf_{(1)} = & \Msf(a^0,b^0,b^1,a^1,c^1,c^0)\in\R\as 
  \Msf_{(2)} = & \Msf(a^0,b^0,b^2,a^2,c^2,c^0)\in\R.
 \ear
\ela
These may be regarded as two conditions which uniquely determine the point $c^0$ in terms of the remaining eight points. Thus, the geometric evolution of the one-dimensional Cauchy data depicted in Figure \ref{rhombus} is algebraically encoded in the pair (\ref{E51c}). In order to obtain a compact form of this algebraic evolution, we first observe that
\bela{E94}
   \frac{\Msf_{(1)}}{\Msf^*_{(1)}} = \mathsf{cr}(c^1,c^0,a^0,c^2),\quad \frac{\Msf_{(2)}}{\Msf^*_{(2)}} = \mathsf{cr}(c^2,c^0,a^0,c^1),
\ela
wherein the multi-ratios $\Msf^*_{(1)}$ and $\Msf^*_{(2)}$ are defined by 
\bela{E53}
 \Msf^*_{(1)} = \Msf(a^0,b^0,b^1,a^1,c^1,c^2),\quad \Msf^*_{(2)} = M(a^0,b^0,b^2,a^2,c^2,c^1).
\ela
The cross-ratio relation 
\bela{E95}
   \mathsf{cr}(c^1,c^0,a^0,c^2) +  \mathsf{cr}(c^2,c^0,a^0,c^1) = 1
\ela
therefore shows that the four multi-ratios $\Msf_{(1)},\Msf_{(2)}$ and $\Msf^*_{(1)},\Msf^*_{(2)}$ obey the algebraic identity
\bela{E51d}
  \frac{\Msf_{(1)}}{\Msf^*_{(1)}} + \frac{\Msf_{(2)}}{\Msf^*_{(2)}}  = 1.
\ela
Finally, if we take the complex conjugate of the above relation and take into account that $\Msf_{(1)}$ and $\Msf_{(2)}$ are real for doubly hexagonal circle patterns, we arrive at the nine-point relation
\bela{E52}
  \frac{\Msf_{(1)}}{\bar{\Msf}^*_{(1)}} + \frac{\Msf_{(2)}}{\bar{\Msf}^*_{(2)}}  = 1.
\ela
The latter constitutes a well-defined evolution equation in the sense that it  represents a linear equation for the point $c^0$ with coefficients depending on the other eight points. Accordingly, conversely, if this relation holds then the two multi-ratios in (\ref{E51c}) are indeed real. Hence, the following theorem has been proven.

\begin{theorem}\label{alastair2}
Doubly hexagonal circle patterns and, in turn, fundamental point-circle complexes are governed by the integrable nine-point equation (\ref{E52}).
\end{theorem}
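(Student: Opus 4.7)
The plan is to treat Theorem~\ref{alastair2} as the culmination of the identities already derived in the preceding paragraphs, together with a short uniqueness/linearity argument for the evolution, and to pull integrability from the already-established multi-dimensional consistency of fundamental point-circle complexes.

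First I would establish the forward direction explicitly. By the Corollary above Theorem~\ref{alastair1}, a doubly hexagonal circle pattern is characterised by $\Msf_{(1)},\Msf_{(2)}\in\R$. I would then take the cross-ratio identity (\ref{E95}), substitute the representations (\ref{E94}) of the two cross-ratios as ratios of multi-ratios, to obtain the purely algebraic identity (\ref{E51d})
\[
\frac{\Msf_{(1)}}{\Msf^*_{(1)}} + \frac{\Msf_{(2)}}{\Msf^*_{(2)}} = 1,
\]
which holds for \emph{any} nine points $a^0,b^0,b^1,a^1,c^1,b^2,a^2,c^2,c^0$ in general position. Taking complex conjugates and using $\overline{\Msf}_{(1)}=\Msf_{(1)}$, $\overline{\Msf}_{(2)}=\Msf_{(2)}$ then produces (\ref{E52}).

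The second step is to verify that (\ref{E52}) is genuinely a well-defined evolution equation for $c^0$ in terms of the other eight points. I would note that $\Msf^*_{(1)}$ and $\Msf^*_{(2)}$ (and hence their conjugates, which are the only coefficients appearing on the left of (\ref{E52})) are built out of the eight points $a^0,b^0,b^1,a^1,c^1,b^2,a^2,c^2$ and therefore do not involve $c^0$. Moreover, from the definition (\ref{E51a}), $\Msf_{(1)}$ and $\Msf_{(2)}$ depend on $c^0$ only through the factors $(c^1-c^0)/(c^0-a^0)$ and $(c^2-c^0)/(c^0-a^0)$ respectively. Clearing the common denominator $c^0-a^0$ in (\ref{E52}) therefore produces a single affine $\C$-linear equation for $c^0$, which uniquely determines $c^0$ for generic Cauchy data.

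The converse direction -- that (\ref{E52}) forces $\Msf_{(1)},\Msf_{(2)}\in\R$ -- will follow by a uniqueness argument rather than by attempting to separately extract two real conditions from one complex equation. Given the eight surrounding points, the two real reality conditions in (\ref{E51c}) determine $c^0$ uniquely as the concurrency point of the grey circles produced by the geometric construction (Figure~\ref{rhombus}). Since the forward direction shows that this same $c^0$ also satisfies the $\C$-linear equation (\ref{E52}), and since (\ref{E52}) has at most one solution in $c^0$, the two systems have the same solution set, hence are equivalent. This is the step I expect to require most care: one must check that the $\C$-linear equation is genuinely non-degenerate (coefficient of $c^0$ not identically zero) for generic configurations so that the uniqueness argument goes through. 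A direct computation of the coefficient of $c^0$ using the explicit form of $\Msf_{(1)},\Msf_{(2)}$ handles this; degeneration corresponds to the Cauchy data lying on an exceptional subvariety which may be excluded.

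Finally, integrability is not a separate computation but a consequence of what has already been proved. By Theorem~\ref{theorem_cauchy} and Theorem~\ref{fund_point_circle}, doubly hexagonal circle patterns extend uniquely to fundamental point-circle complexes, which, via the dictionary of Section~2, are governed by the real symmetric $M$-system (\ref{E12}); the latter is multi-dimensionally consistent. Since (\ref{E52}) is precisely the algebraic characterisation of the Cauchy data for those complexes, its solutions inherit this consistency, which is the notion of integrability meant in the statement.
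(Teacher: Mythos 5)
Your proposal follows essentially the same route as the paper: derive the universal identity (\ref{E51d}) from the cross-ratio relation (\ref{E95}) via (\ref{E94}), conjugate using the reality of $\Msf_{(1)},\Msf_{(2)}$ to obtain (\ref{E52}), and obtain the converse from the observation that (\ref{E52}) is linear in $c^0$ and hence has a unique solution coinciding with the geometrically determined point. Your explicit uniqueness/non-degeneracy argument for the converse and the appeal to multi-dimensional consistency for integrability merely spell out steps the paper leaves implicit.
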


\begin{remark}
The above nine-point relation regarded as a lattice equation on an $A_2$ lattice implies the other two nine-point relations which are obtained by rotating the rhombic configuration by 120 degrees. The Cauchy problem in Figure \ref{star} (left) is applicable if, in each sector, an appropriate choice of one of the three nine-point relations is made. Hence, the Cauchy data for the algebraic description of doubly hexagonal circle patterns are one-dimensional, corresponding to a ``two-dimensional'' reduction of the symmetric $M$-system.
\end{remark}

A geometric proof of the integrability of fundamental point-circle complexes in the sense of  ``consistency around the (hyper)cube'' (see \cite{BobenkoSuris09} and references therein) is based on the existence of Clifford's classical $\mathcal{C}_4$ point-circle configuration \cite{Clifford71}. Thus, given an elementary cube of a fundamental point-circle complex such as the small cube in Figure \ref{hypercube}, we can extend this cube to a hypercube in the following manner. We prescribe another (black) circle passing through one of the points of the cube. 
\begin{figure}
\centerline{\includegraphics[scale=0.25]{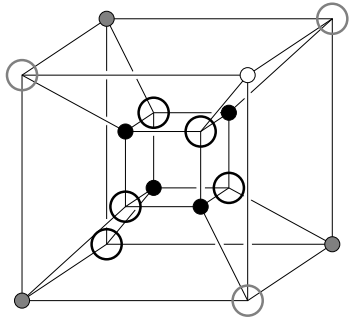}}
\caption{A hypercube extension of a cube of a fundamental point-circle complex.}
\label{hypercube}
\end{figure}
The four circles passing through this point have six points of intersection. Three of them are part of the cube and the other three points are coloured grey in Figure~\ref{hypercube}. The points of intersection of triples of the four circles generate another four circles, one of which is part of the original cube. Clifford's theorem then states that this circle and the other three (grey) circles meet in a (white) point. It is now easily verified that, by virtue of Clifford's theorem, the hypercube extensions of all remaining elementary cubes of the fundamental point-circle complex exist and are unique. In this manner, one obtains an extension of the fundamental point-circle complex defined on the union of two ``parallel'' $\Z^3$ slices of a $\Z^4$ lattice. The fundamental point-circle complex associated with the $\Z^3$ lattice parallel to the original $\Z^3$ lattice constitutes a B\"acklund transform of the original fundamental point-circle complex in the sense of integrable systems theory \cite{BobenkoSuris09,RogersSchief02}. Furthermore, this two-layer point-circle configuration may be extended on the entire $\Z^4$ lattice.

\begin{remark}
By construction, an elementary cube of a fundamental circle complex may be regarded as a generalisation of an elementary cube of a fundamental point-circle complex which, in turn, is nothing but a $\mathcal{C}_3$ point-circle configuration. As demonstrated above, the latter Clifford configuration may be extended to a (non-unique) $\mathcal{C}_4$ configuration and, in fact, to a $\mathcal{C}_n$ configuration for any $n>3$. Since the symmetric $M$-system may be imposed on a $\Z^n$ lattice for any $n\geq3$ and, in particular, on any $n$-dimensional hypercube, elementary cubes of fundamental circle complexes extended to $n$-dimensional hypercubes generalise classical $\mathcal{C}_n$ point-circle configurations. Hence, we have constructed a canonical Lie geometric generalisation of Clifford's classical chain of circle theorems \cite{LonguetHiggins72}. It is emphasised that the requirement of correlated opposite circles is crucial in this context since it guarantees the existence of these hypercube extensions.
\end{remark}

\section{Hyperbolic and Laguerre geometry}

\subsection{A parametrisation of the Lie quadric}

In the previous section, we have exploited the interpretation of M\"obius geometry as a sub-geometry of Lie circle geometry obtained by considering the intersection of the Lie quadric $Q^3$ and a hyperplane of signature $(3,1)$. Indeed, the M\"obius quadric $Q^2$ representing circles of zero radius (points) on the plane is the orthogonal complement of a ``time-like'' vector $\Esf$ so that
\bela{E55}
  Q^2 = \{\Vsf\in Q^3 : \langle \Vsf,\Esf\rangle = 0\},\quad \langle\Esf,\Esf\rangle = -1.
\ela
If, for convenience, we choose
\bela{E56}
  \diag\left[1,1,-\frac{1}{2}\left(\bear{cc}0&1\\ 1&0\ear\right),-1\right]
\ela
as the metric of the space $\R^{3,2}$ of homogeneous coordinates then we may set
\bela{E57}
  \Esf = (0,0,0,0,1)
\ela
without loss of generality. The geometric interpretation of the following parametrisation of the points of the Lie quadric reflects this choice.
\medskip

\noindent
Oriented circles of signed radius $r\neq0$ centred at $(x,y)$:
\bela{E58}  
  \Vsf_\ocircle = (x,y,1,x^2+y^2-r^2,r).
\ela
Oriented lines $\{(x,y) : xv + yw = d\}$ with $v^2 + w^2 = 1$:
\bela{E59}
 \Vsf_{|} = (v,w,0,2d,1).
\ela
Points $(x,y)$:
\bela{E60}  
  \Vsf_\bullet = (x,y,1,x^2+y^2,0).
\ela
Point at ``infinity'':
\bela{E61}  
  \Vsf_\infty = (0,0,0,1,0).
\ela
\medskip

\noindent
In particular, it is seen that $Q^2$ encodes all points in the plane and the point at infinity, that is, in short-hand notation, $Q^2 = \{\Vsf_\bullet,\Vsf_\infty\}$.

\subsection{The geometry of real cross-ratios}

The proof of the existence and algebraic integrability of fundamental point-circle complexes has been based entirely on combinatorial arguments and properties of the cross-ratio of four points. This also applies to Clifford's theorem which is a consequence of Miquel's theorem. The latter, regarded as a theorem for hexahedral point-circle configurations, may be proven in the same manner as Theorem \ref{rhombic_theorem} associated with rhombic dodecahedral point-circle configurations. This observation will be exploited in Sections 6.3 and 6.4 to formulate and prove analogues of relevant theorems such as Miquel's theorem in hyperbolic and Laguerre geometry. To this end, it turns out convenient to reformulate the condition for four points to lie on a(n) (oriented) circle. Thus, a point
represented by 
\bela{E62}
  \Vsf_{(k)} = (x_{(k)},y_{(k)},1,x_{(k)}^2+y_{(k)}^2,0)
\ela
lies on a circle represented by $\Vsf_\ocircle$ if and only if $\langle\Vsf_{(k)},\Vsf_\ocircle\rangle = 0$, that is,
\bela{E63}
 \left(\bear{ccc}x_{(k)}& y_{(k)}&\dis -\frac{1}{2}\ear\right)\left(\bear{c}x\\y\\ x^2+y^2-r^2\ear\right) = \frac{1}{2}(x_{(k)}^2 + y_{(k)}^2).
\ela
Accordingly, four points represented by $\Vsf_{(k)}$, $k=1,\ldots,4$ lie on a circle (or a line) if and only if the above linear system of equations has vanishing associated determinant, that is,
\bela{E64}
  \left|\bear{cccc} 1 & x_{(1)} & y_{(1)} & x_{(1)}^2 + y_{(1)}^2\as
                           1 & x_{(2)} & y_{(2)} & x_{(2)}^2 + y_{(2)}^2\as
                           1 & x_{(3)} & y_{(3)} & x_{(3)}^2 + y_{(3)}^2\as
                           1 & x_{(4)} & y_{(4)} & x_{(4)}^2 + y_{(4)}^2
  \ear\right| = 0.
\ela
It is noted that the solution of the linear system (\ref{E63}) is such that the circle (of either orientation) is real. In terms of complex numbers $z = x + iy$, the above condition may be expressed as
\bela{E65}
  \left|\bear{cccc} 1 & z_{(\cdot)} & \bar{z}_{(\cdot)} & |z_{(\cdot)}|^2\ear\right| = 0
\ela
in shorthand notation. On the other hand, it is readily verified that
\bela{E66}
   \left|\bear{cccc} 1 & z_{(\cdot)} & \bar{z}_{(\cdot)} & |z_{(\cdot)}|^2\ear\right| = 
  - (z_{(1)} - z_{(2)})(z_{(3)} - z_{(4)})(\bar{z}_{(2)} - \bar{z}_{(3)})(\bar{z}_{(4)} - \bar{z}_{(1)}) +\, \mbox{c.c.} 
\ela
so that
\bela{E67}
  \left|\bear{cccc} 1 & z_{(\cdot)} & \bar{z}_{(\cdot)} & |z_{(\cdot)}|^2\ear\right| = 0\quad\Leftrightarrow\quad
\frac{(z_{(1)} - z_{(2)})(z_{(3)} - z_{(4)})}{(z_{(2)} - z_{(3)})(z_{(4)} - z_{(1)})}\in\R.
\ela
Hence, we have retrieved the fact that four points lie on a circle (of, possibly, infinite radius) if and only if the cross-ratio of the four points is real. The algebraic equivalence (\ref{E67}) which holds, {\it mutatis mutandis}, in the case of ``generalised'' complex numbers turns out to be key in the following discussion.

\subsection{Hyperbolic geometry}

We now examine fundamental complexes associated with a ``space-like'' vector~$\Esf$. This corresponds to the intersection of the Lie quadric $Q^3$ with a hyperplane of signature $(2,2)$. If we make the choice
\bela{E68}
  \Esf = (0,1,0,0,0)
\ela
without loss of generality then the resulting quadric 
\bela{E68a}
  Q^2 = \{\Vsf\in Q^3 : \langle \Vsf,\Esf\rangle = 0\},\quad \langle\Esf,\Esf\rangle = 1
\ela
is composed of the points $Q^2 = \{\Vsf_\ominus,\Vsf_\perp,\Vsf_\infty\}$ with
\bela{E69}
 \bear{rl}
  \Vsf_\ominus = & (x,0,1,x^2 - r^2,r)\as
  \Vsf_\perp = & (\pm 1,0,0,2d,1),
  \ear
\ela
including the case $r=0$. Hence, $Q^2$ represents the set of oriented circles and straight lines on the plane which are orthogonal to the $x$-axis (and the point at infinity). Accordingly, the corresponding semi-circles and semi-lines in the upper half-plane may be identified with oriented geodesics of the Poincar\'e half-plane model of hyperbolic geometry \cite{Hyperbolic} so that, for lack of a better expression, we refer to the oriented circles and lines encoded in $Q^2$ as `geodesics'. It is observed that the Poincar\'e disk model is obtained by making the choice $\Esf = (0,0,1,-1,0)$ with the quadric $Q^2$ representing the circles and lines which are orthogonal to the unit circle. In this connection, it is emphasised that the geometry discussed in this section does not constitute standard hyperbolic geometry but a natural generalisation. Here, we are concerned with the geometry of oriented touching circles in the hyperbolic plane (or disk). The corresponding symmetry group consists of the Lie transformations of oriented circles which preserve $\mathsf{E}$. It includes but is not confined to hyperbolic isometries. Due to its analogy to Laguerre geometry, one may refer to the geometry considered here as ``hyperbolic Laguerre geometry''.

\subsubsection{The geometry of double numbers}

As in the M\"obius case, we now determine the constraint on four geodesic circles which guarantees that these are tangent to a common circle (or line). Since the condition that four geodesic circles represented by
\bela{E69a}
 \Vsf_{(k)} = (x_{(k)},0,1,x_{(k)}^2 - r_{(k)}^2,r_{(k)}),\quad k=1,\ldots,4 
\ela
touch a circle represented by $\Vsf_\ocircle$ is given by $\langle\Vsf_{(k)},\Vsf_\ocircle\rangle = 0$, we obtain the linear system
\bela{E70}
 \left(\bear{ccc}x_{(k)}& -r_{(k)}&\dis -\frac{1}{2}\ear\right)\left(\bear{c}x\\r\\ x^2+y^2-r^2\ear\right) = \frac{1}{2}(x_{(k)}^2 - r_{(k)}^2).
\ela
Accordingly, the four geodesic circles touch a circle (or a line) if and only if
\bela{E71}
   \left|\bear{cccc} 1 & x_{(\cdot)} & r_{(\cdot)} & x_{(\cdot)}^2 - r_{(\cdot)}^2\ear\right| = 0.
\ela
In terms of the associative-commutative algebra of {\em double numbers} \cite{Yaglom68}
\bela{E72}
  z = x + jr,\quad \bar{z} = x - j r,\quad |z|^2 = z\bar{z} = x^2 - r^2,\quad j^2 = 1,
\ela
this may be formulated, once again, as
\bela{E73}
  \left|\bear{cccc} 1 & z_{(\cdot)} & \bar{z}_{(\cdot)} & |z_{(\cdot)}|^2\ear\right| = 0.
\ela
It is evident that there exists a one-to-one correspondence between oriented geodesic circles and double numbers with the ``real'' and ``imaginary" parts representing the location and the signed radius respectively of a geodesic circle. Since the identity (\ref{E66}) is also valid for double numbers, four non-touching oriented geodesic circles labelled by the double numbers $z_{(k)}$, $k=1,\ldots,4$ have oriented contact with a circle (of, possibly, infinite radius) if and only if the cross-ratio
\bela{E74}
 \mathsf{cr}(z_{(1)},z_{(2)},z_{(3)},z_{(4)}) = \frac{(z_{(1)} - z_{(2)})(z_{(3)} - z_{(4)})}{(z_{(2)} - z_{(3)})(z_{(4)} - z_{(1)})}
\ela
is ``real'' \cite{Yaglom68}.  Here, the term `non-touching' refers to geodesic circles which do not have oriented contact. This guarantees that the above cross-ratio is well-defined since two geodesic circles labelled by $z_\ominus$ and $z_\oslash$ have oriented contact if and only if $|z_\ominus - z_\oslash|^2 = 0$ as may be verified by evaluating the corresponding condition $\langle\Vsf_\ominus,\Vsf_\oslash\rangle=0$.

The connection between double numbers and oriented geodesic circles may now be exploited to translate theorems in M\"obius geometry into theorems in hyperbolic geometry if these may be proven in a purely combinatorial manner using cross-ratios. For instance, Miquel's theorem \cite{Pedoe88} states that, given four circles $\csf^{1234},\ldots,\csf^{7812}$ in the plane which cyclically intersect in four pairs of points $(\psf^1,\psf^2),\ldots,(\psf^7,\psf^8)$, if four points of intersection $\psf^1,\psf^3,\psf^5,\psf^7$ are concyclic then the remaining four points $\psf^2,\psf^4,\psf^6,\psf^8$ are concyclic as indicated in Figure \ref{miquel}.
\begin{figure}
\centerline{\includegraphics[scale=0.4]{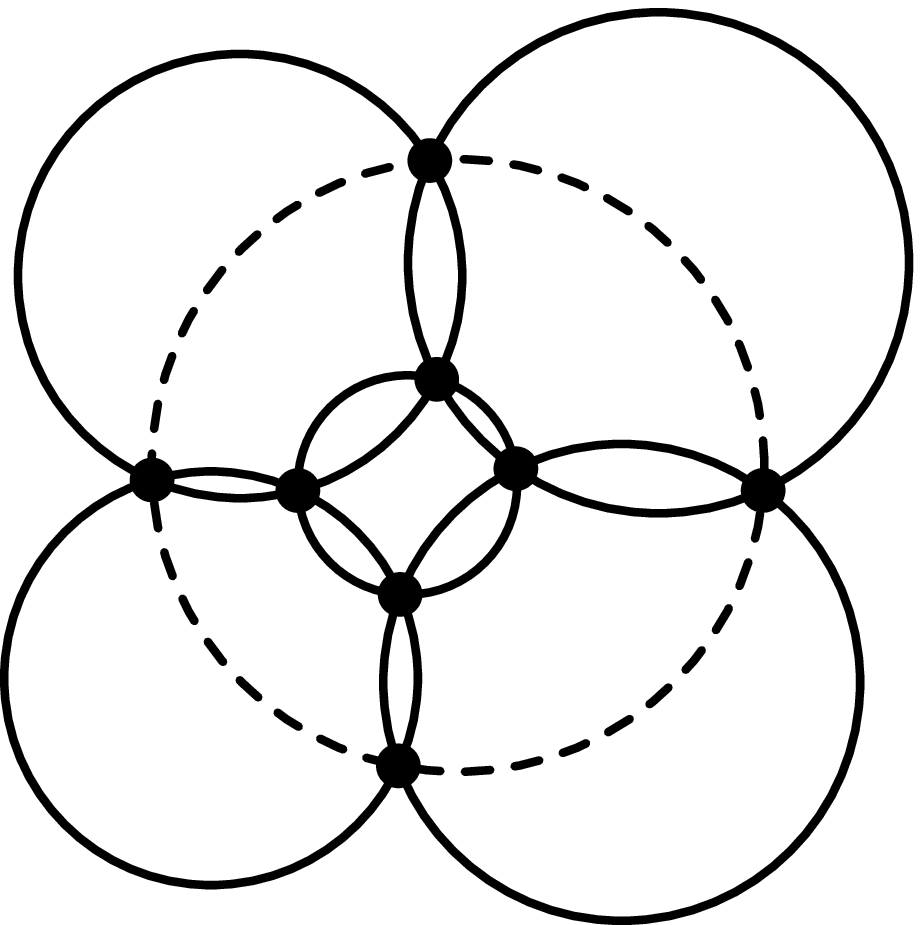}\qquad\qquad\includegraphics[scale=0.4]{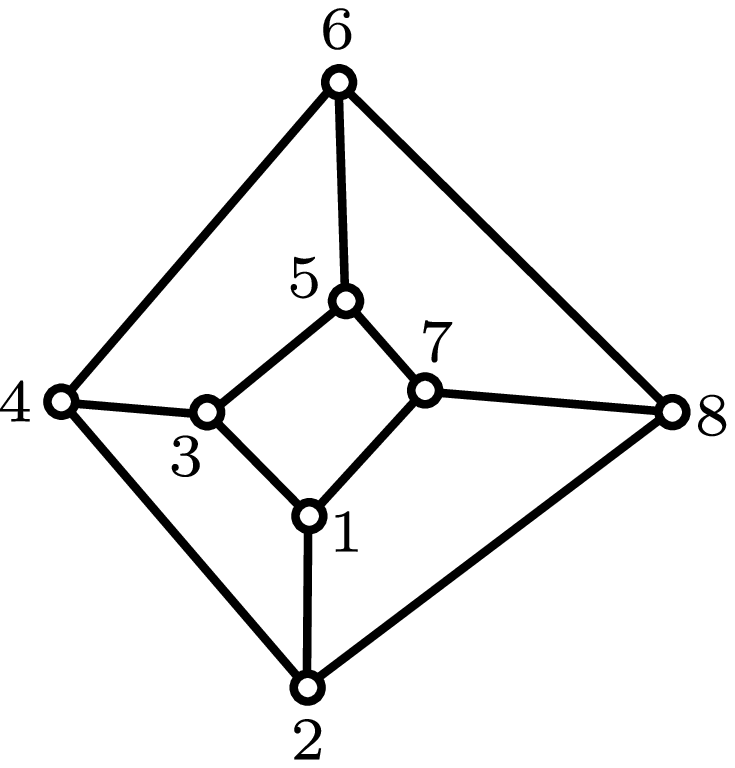}}
\caption{Miquel's theorem and the hexahedral combinatorics of the associated configuration.}
\label{miquel}
\end{figure}
This is readily proven by regarding the Miquel figure as a configuration of six circles and eight points which has the combinatorics of a cube as indicated in Figure \ref{miquel}. Indeed, as in the case of rhombic dodecahedral point-circle configurations, for any hexahedral configuration of points $\psf^1,\ldots,\psf^8$, one may define six cross-ratios $\mathsf{cr}_k$ associated with the faces of the configuration in such a manner that
\bela{E75}
  \prod_{k=1}^6\mathsf{cr}_k = 1.
\ela
Hence, if five faces are circular then the corresponding five cross-ratios are real and (\ref{E75}) implies that the sixth cross-ratio is likewise real. 

If we replace complex numbers by double numbers then, by associating geodesic circles and circles with the vertices and faces respectively of the combinatorial hexahedron in Figure \ref{miquel} (right), the above argument leads to the following geometric statement.

\begin{theorem}
Let $\gsf^1,\ldots,\gsf^8$ be a hexahedral configuration of geodesic circles. If there exist four circles $\csf^{1234},\ldots,\csf^{7812}$ which have oriented contact with the respective pairs of geodesic circles $(\gsf^1,\gsf^2),\ldots,(\gsf^7,\gsf^8)$ and there exists a fifth circle $\csf^{1357}$ which has oriented contact with the geodesic circles $\gsf^1,\gsf^3,\gsf^5,\gsf^7$ then the geodesic circles $\gsf^2,\gsf^4,\gsf^6,\gsf^8$ touch a circle $\csf^{2468}$ in an oriented manner.
\end{theorem}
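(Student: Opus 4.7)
The plan is to mimic verbatim the combinatorial proof of Miquel's classical theorem recalled just before the statement, but with complex numbers replaced by double numbers throughout. I associate to each geodesic circle $\gsf^k$ its double number $z^k = x^k + jr^k$ as in (\ref{E72}), so that, by the criterion (\ref{E73})--(\ref{E74}) established in the preceding subsection, four (mutually non-touching) geodesic circles have oriented contact with a common circle of the plane if and only if their associated cross-ratio of double numbers is ``real'', i.e.\ has vanishing $j$-part.

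Next I regard the eight geodesic circles $\gsf^1,\ldots,\gsf^8$ and the six circles $\csf^{1234},\csf^{3456},\csf^{5678},\csf^{7812},\csf^{1357},\csf^{2468}$ (the last one being the one whose existence we wish to conclude) as the vertices and faces, respectively, of a combinatorial hexahedron, in exactly the labelling of Figure \ref{miquel} (right). To each face I attach the cross-ratio $\mathsf{cr}_k = \mathsf{cr}(z^{i_1},z^{i_2},z^{i_3},z^{i_4})$ of the double numbers of its four vertices, taken in the cyclic order induced by a consistent orientation of the boundary. The key identity is the purely algebraic relation
\begin{equation*}
\prod_{k=1}^6 \mathsf{cr}_k = 1,
\end{equation*}
which, just as in the complex case (\ref{E75}), is proven by checking that along every edge of the hexahedron the two factors coming from the two incident faces cancel one another; this combinatorial cancellation uses only commutativity and associativity of the underlying algebra, both of which hold for double numbers.

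Applying the hypotheses, I obtain five ``real'' double-number cross-ratios: $\mathsf{cr}_k \in \R \cdot 1$ for the five faces corresponding to $\csf^{1234},\csf^{3456},\csf^{5678},\csf^{7812},\csf^{1357}$, by virtue of the oriented-contact characterisation (\ref{E67})--(\ref{E73}) applied in the double-number setting. The product identity then forces the sixth cross-ratio, attached to the face with vertices $\gsf^2,\gsf^4,\gsf^6,\gsf^8$, to also be real. Reading the double-number criterion backwards yields a circle (possibly of infinite radius, i.e.\ a line) which is in oriented contact with all four of these geodesic circles, which is the desired $\csf^{2468}$.

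The only genuine obstacle is bookkeeping around degeneracies: one must verify that the relevant cross-ratios are actually well-defined, which requires that none of the pairs of geodesic circles sharing a face already touch (so that the differences $z^i - z^{i'}$ are not zero-divisors in the double-number algebra, using $|z_\ominus-z_\oslash|^2=0 \Leftrightarrow \langle \Vsf_\ominus,\Vsf_\oslash\rangle=0$), and that the oriented-contact characterisation survives the possible appearance of straight lines as limits; both points are handled by adopting the same ``general position'' assumptions that underlie the classical Miquel argument and by allowing the touching circle to be a line in the sense of (\ref{E59}), exactly as is standard in Lie circle geometry. Apart from this, the proof is algebraically identical to that of the rhombic dodecahedral Theorem \ref{rhombic_theorem}, transported from $\C$ to the double numbers.
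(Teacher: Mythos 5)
Your proposal is correct and follows essentially the same route as the paper: the authors likewise prove the hexahedral (Miquel-type) statement by transporting the combinatorial cross-ratio argument --- six face cross-ratios whose product is $1$, with edge-wise cancellation, so that five real cross-ratios force the sixth to be real --- from the complex numbers to the double numbers, using the criterion that four geodesic circles touch a common circle precisely when their double-number cross-ratio is real. Your added care about non-touching pairs (non-zero-divisor differences) and about lines as limiting circles matches the paper's own caveats, so nothing further is needed.
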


\noindent
It is observed that the sixth circle $\csf^{2468}$ is not unique. There exist two such circles which are related by reflection in the $x$-axis. In fact, if we replace by its reflection any circle which is contained in the lower half-plane or touches the $x$-axis from below then we obtain a Miquel-type configuration of eight geodesics and six (hyper-/horo-)cycles in the sense of hyperbolic geometry \cite{Hyperbolic}. A typical Miquel-type configuration within the Poincar\'e disk model obtained by applying a conformal transformation is displayed in Figure \ref{miquelhyperbolic} (left). The eight geodesics are represented by oriented circular arcs which meet a given circle orthogonally.
\begin{figure}
\centerline{\includegraphics[scale=0.18]{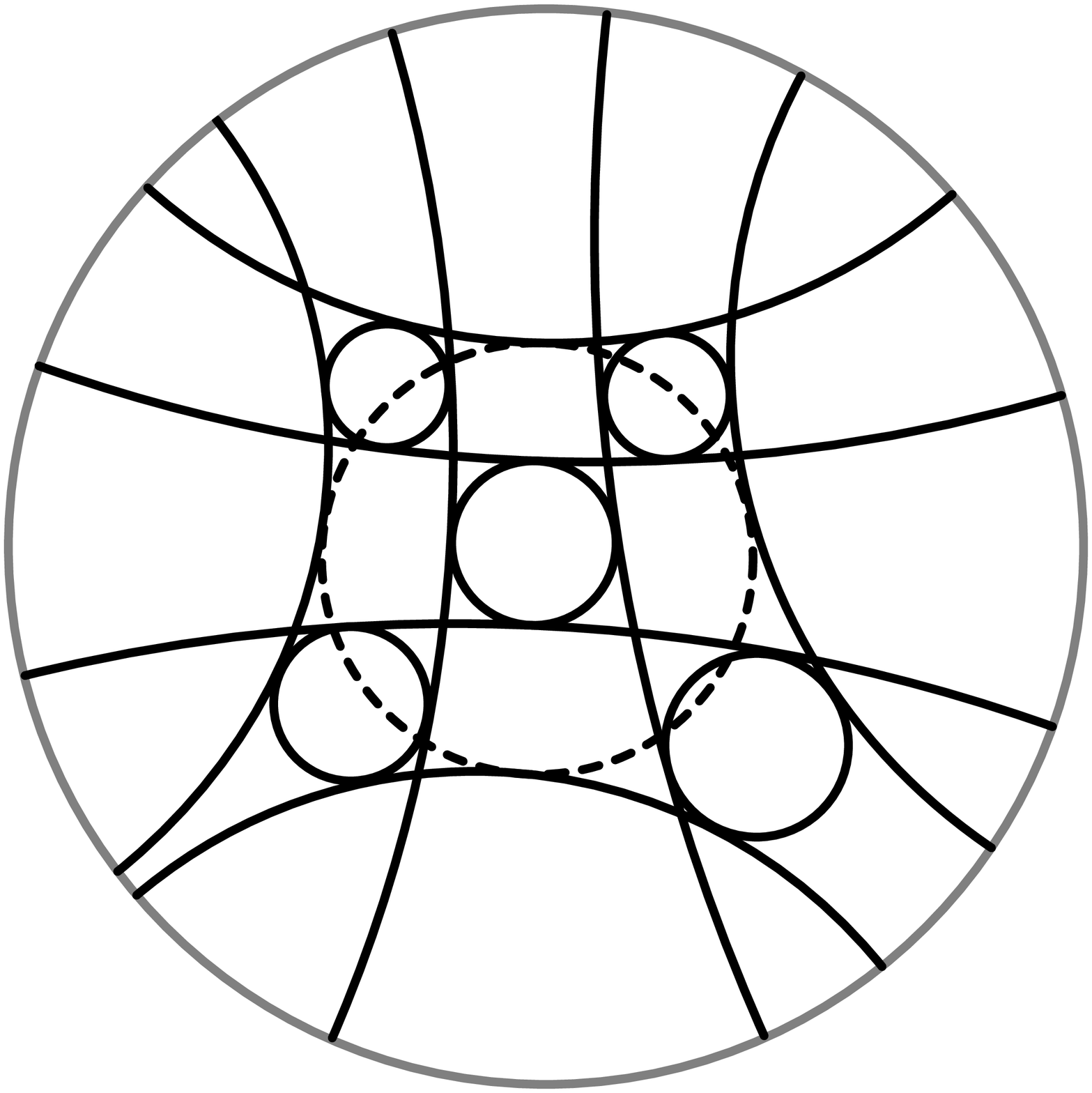}\qquad\qquad\includegraphics[scale=0.25]{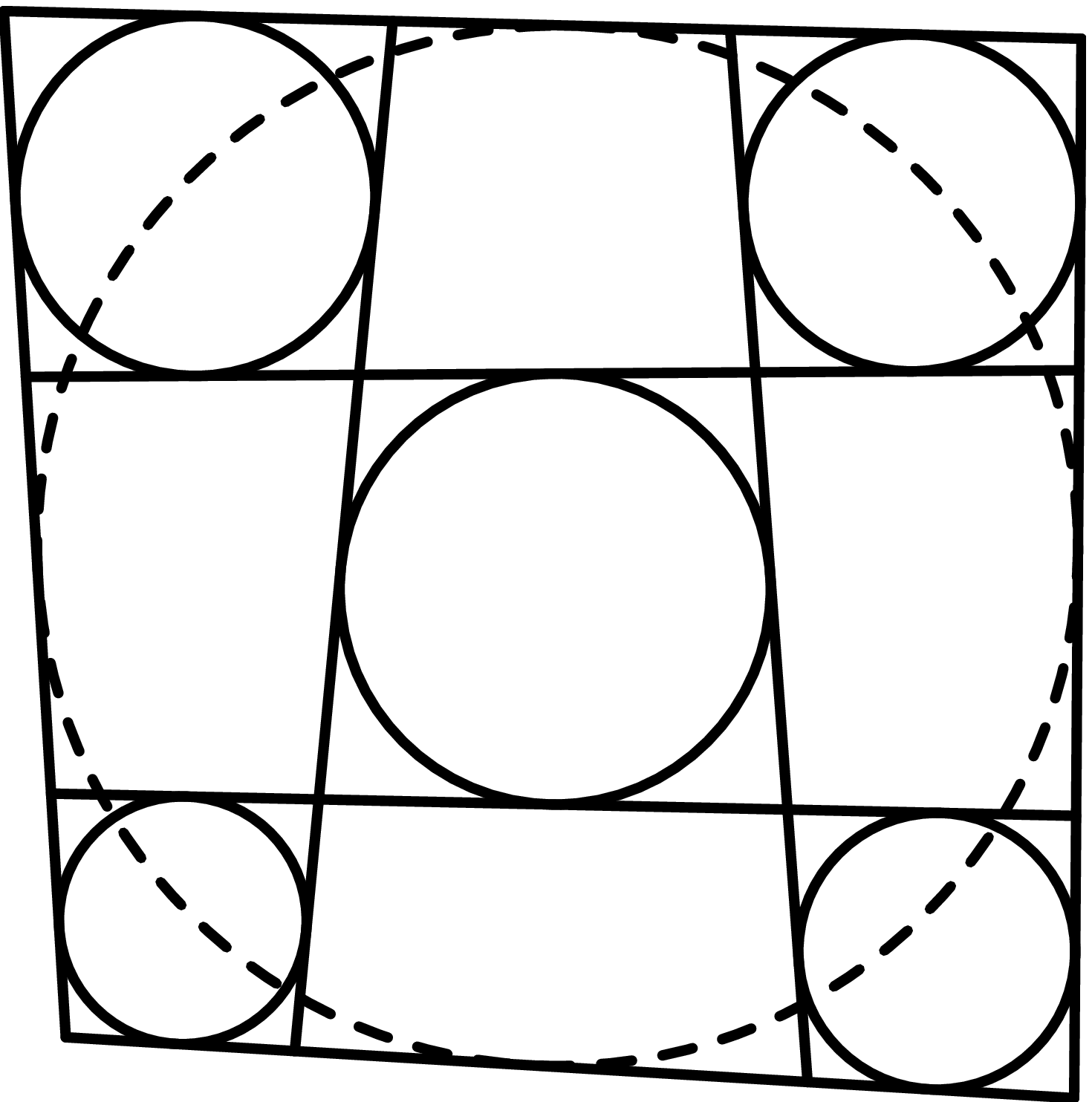}}
\caption{Analogues of Miquel figures in hyperbolic geometry (left) and Laguerre geometry (right).}
\label{miquelhyperbolic}
\end{figure}

\subsubsection{Fundamental geodesic-circle complexes}

In order to define a canonical analogue of fundamental point-circle complexes in hyperbolic geometry, we consider maps of the type
\bela{76}
 \bear{rl}
  \gsf : \De_3 & \rightarrow \{\mbox{oriented geodesic circles in $\R^2$}\}\as
  \csf : \Do_3 & \rightarrow \{\mbox{oriented circles in $\R^2$}\}
 \ear
\ela
which we combine to maps $(\gsf,\csf)$ defined on $\Z^3 = \De_3\cup\Do_3$. The following dictionary of incidences now allows us to replicate the geometric and algebraic discussion of the previous section within hyperbolic geometry.
\begin{center}
    \begin{tabular}{ | p{5.5cm} | p{5.5cm} |}
    \hline
    M\"obius geometry & hyperbolic geometry\\ \hline\hline
      a point on a circle & a geodesic circle and a circle in oriented contact\\ \hline
      a pair of points of intersection of two circles &  a (unique) pair of geodesic circles in oriented contact with two circles\\
    \hline
   a circle (either orientation) through three points & a (unique) circle (and its reflection) in oriented contact with three geodesic circles\\
   \hline
    \end{tabular}
\end{center}
\begin{definition}
A configuration of oriented geodesic circles and circles combinatorially attached to the vertices of the even and odd sublattices of a $\Z^3$ lattice respectively is termed a {\em fundamental geodesic-circle complex} if the geodesic circles and circles are incident along the edges of the $\Z^3$ lattice.
\end{definition}

Fundamental geodesic-circle complexes may be constructed in the same manner as fundamental point-circle complexes. The first step is to consider a Cauchy problem of the combinatorial type displayed in Figure \ref{star} (left) with the points being interpreted as geodesic circles and the three circles being in oriented contact with the relevant geodesic circles. The geodesic circles $\gsf(A_2^{-2}),\gsf(A_2^0),\gsf(A_2^2)$ and circles $\csf(A_2^{-1}),\csf(A_2^1)$ of a ``two-dimensional'' geodesic-point circle configuration may then be determined iteratively as in the M\"obius geometric case. The only difference is that the circles are defined up to reflection in the $x$-axis. However, this binary choice does not affect the geodesic circles. Secondly, the analogue of Theorem \ref{rhombic_theorem} for rhombic dodecahedral geodesic-circle configurations gives rise to a well-defined propagation of the constrained two-dimensional Cauchy data. In this manner, one obtains a complete fundamental geodesic-circle complex which is unique up to the above-mentioned binary choice of the circles. Thirdly, B\"acklund transforms of fundamental geodesic-circle complexes may be constructed by means of the analogue of Clifford's theorem in hyperbolic geometry in the sense of the above-mentioned dictionary. Its validity is due to the fact that Clifford's theorem may be regarded as a purely combinatorial application of Miquel's theorem. Hence, we have come to the following conclusion which is justified in the same manner as Theorem \ref{fund_point_circle}. Here, it is noted that the key relation (\ref{Z1}) is also valid in the current context if it is formulated in terms of real quantities $x,y,r$ and the formal substitution $(y,r)\rightarrow i(r,y)$ is made so that
\bela{76a}
  y_{123} = -y_1y_2y_3\frac{\left|\bear{ccc}1&z_1&\bar{z}_1\\ 1&z_2&\bar{z}_2\\ 1&z_3&\bar{z}_3\ear\right|}{
\left|\bear{cccc}1&z&\bar{z}&|z|^2\\ 1&z_1&\bar{z}_1&|z_1|^2\\ 1&z_2&\bar{z}_2&|z_2|^2\\  1&z_3&\bar{z}_3&|z_3|^2\ear\right|}
\ela
on use of double numbers $z = x + j r$.

\begin{theorem}
The geodesic circles of a fundamental geodesic-circle complex $(\gsf,\csf)$ are uniquely determined by one-dimensional Cauchy data of the combinatorial type depicted in Figure \ref{star} (left). The circles of a fundamental geodesic-circle complex come in pairs related by reflection in the $x$-axis. If, for a fixed choice of the circles associated with the one-dimensional Cauchy data, the choice of the remaining circles is dictated by correlation then a fundamental geodesic-circle complex may be interpreted as a particular fundamental circle complex. In this sense, fundamental geodesic-circle complexes are governed by a reduction of the symmetric $M$-system. 
\end{theorem}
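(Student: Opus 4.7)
The plan is to replicate, step by step, the reasoning used for fundamental point-circle complexes in M\"obius geometry, substituting double numbers for complex numbers wherever cross-ratios appear. Since every combinatorial cross-ratio identity used in the M\"obius proof remains valid if one replaces $i^2=-1$ by $j^2=+1$, and since the algebraic equivalence~(\ref{E67}) holds \emph{mutatis mutandis} for double numbers (as recorded in Section~6.3), all geometric statements established there by a purely combinatorial cross-ratio argument carry over verbatim to the setting of oriented geodesic circles and oriented touching circles via the dictionary given before the definition of fundamental geodesic-circle complexes.

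First I would translate Theorem~\ref{theorem_cauchy} to establish the two-dimensional Cauchy problem: the data $\gsf(A^{-2}_2)$, $\csf(A^{-1}_2)$, $\gsf(A^0_2)$, $\csf(A^1_2)$, $\gsf(A^2_2)$ respecting oriented contact along all edges determine the complex uniquely (up to the reflection ambiguity of each circle in the $x$-axis, which leaves geodesic circles intact). The essential ingredient is the hyperbolic analogue of Theorem~\ref{rhombic_theorem}, whose proof reduces to the existence of a coherent orientation of the $12$ faces of the rhombic dodecahedron such that the product of their cross-ratios equals $1$; this combinatorial identity is insensitive to whether the numbers involved are complex or double and yields the required $12$-geodesic-circle statement. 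The one-dimensional reduction then follows exactly as in Figure~\ref{star}: in each of the three $120$-degree sectors, the nine-geodesic-circle building block of Figure~\ref{rhombus}, applied iteratively, constructs admissible data from prescribed geodesic circles on the star-shaped region.

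To justify the assignment of orientations by correlation, I would next prove the hyperbolic analogue of the main lemma behind Theorem~\ref{fund_point_circle}: if seven of eight adjacent elementary cubes of a fundamental geodesic-circle complex are correlated, so is the eighth. The argument proceeds as in the M\"obius case, the sole modification being that the key radius formula~(\ref{Z1}) is replaced by its double-number analogue~(\ref{76a}). Writing out the eight relations analogous to~(\ref{Z1b}) and taking the ratio of alternating products, one concludes that the ratio $r'_{123}/r_{123}$ of the radius obtained by correlation to the radius obtained by the combinatorial closure of the rhombic-dodecahedral configuration is a rational function $F$ of the Cauchy data whose range is $\{-1,+1\}$, hence globally constant. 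Evaluation on a convenient symmetric configuration (for example one in which all geodesic circles have unsigned radius $1$ and the centres $x_k$ are chosen so that all Cayley--Menger-type determinants are non-degenerate) fixes $F\equiv1$.

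The main obstacle is the last point: because the geometric intuition that guided the Euclidean symmetric test configuration in the proof of Theorem~\ref{fund_point_circle} is no longer directly visualisable under the $j^2=+1$ signature, some care is needed to pick a reference configuration on which none of the denominators of~(\ref{76a}) vanishes and on which the signs of the radii can be independently verified. Once $F\equiv 1$ is confirmed, an induction identical to that underlying Theorem~\ref{fund_point_circle} (using Clifford's theorem in its hyperbolic-geometry incarnation, itself a combinatorial consequence of the hyperbolic Miquel theorem established in Section~6.3) propagates the correlation-based orientation over the entire $\Z^3$ lattice, so that the fundamental geodesic-circle complex is exhibited as a particular fundamental circle complex and is consequently governed by a reduction of the symmetric $M$-system.
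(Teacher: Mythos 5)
Your proposal is correct and follows essentially the same route as the paper: the paper likewise constructs the complex by translating the M\"obius-geometric Cauchy-problem argument through the double-number dictionary (with the combinatorial cross-ratio identities for the rhombic dodecahedral and hexahedral configurations carrying over unchanged), and justifies the correlation-based choice of orientations ``in the same manner as Theorem \ref{fund_point_circle}'' by replacing the radius formula (\ref{Z1}) with its double-number analogue (\ref{76a}). Your added care about choosing a non-degenerate reference configuration for the $F\equiv 1$ evaluation is a reasonable elaboration of a point the paper leaves implicit, but it does not change the method.
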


\begin{remark}
As stated in the above theorem, for four geodesic circles and three circles which are incident along the edges of an elementary cube of a $\Z^3$ lattice, there exists a canonical choice for the fourth circle, namely the circle which is correlated to the ``opposite'' geodesic circle. However, the construction of fundamental geodesic-circle complexes within the Poincar\'e half-plane model of hyperbolic geometry may require reflecting appropriate circles in the $x$-axis. In the case of the Poincar\'e disk model,  one may either subsequently apply a conformal transformation or choose the space-like vector $\Esf=(0,0,1,-1,0)$ and invert circles in the unit circle whenever appropriate. It is also remarked that the fact that the binary choice of the circles does not affect the geodesic circles is reflected by (\ref{E52}), that is,
\bela{E77}
  \frac{\Msf_{(1)}}{\bar{\Msf}^*_{(1)}} + \frac{\Msf_{(2)}}{\bar{\Msf}^*_{(2)}}  = 1,
\ela
which, in the current context, constitutes an evolution equation for double numbers governing the geodesic circles only.
\end{remark}

\subsection{Laguerre geometry}

Laguerre geometry of the plane \cite{Blaschke29} is obtained from Lie circle geometry by considering a ``light-like'' vector $\Esf$, corresponding to the intersection of the Lie quadric $Q^3$ with a hyperplane of signature $(2,1,0)$. In this case, we may choose
\bela{E78}
  \Esf = (0,0,0,1,0)
\ela
without loss of generality
so that the quadric 
\bela{E79}
  Q^2 = \{\Vsf\in Q^3 : \langle\Vsf,\Esf\rangle = 0\},\quad \langle\Esf,\Esf\rangle = 0
\ela
consists of the points $Q^2 = \{\Vsf_|,\Vsf_\infty\}$. Hence, $Q^2$ represents the set of oriented lines (and the point at infinity). In the same manner as before, we can now translate incidences in M\"obius geometry into incidences in Laguerre geometry.
\begin{center}
    \begin{tabular}{ | p{5.5cm} | p{5.5cm} |}
    \hline
    M\"obius geometry & Laguerre geometry\\ \hline\hline
      a point on a circle & a line and a circle in oriented contact\\ \hline
      a pair of points of intersection of two circles &  a (unique) pair of lines in oriented contact with two circles\\
    \hline
   a circle (either orientation) through three points & a (unique) circle (and the point at infinity) in oriented contact with three lines\\
   \hline
    \end{tabular}
\end{center}
In this way, one retrieves, for instance, the analogue of Miquel's theorem in Laguerre geometry as stated below \cite{Yaglom68} (see Figure \ref{miquelhyperbolic} (right)).

\begin{theorem}
Let $\Lsf^1,\ldots,\Lsf^8$ be a hexahedral configuration of lines in the plane. If there exist four circles $\csf^{1234},\ldots,\csf^{7812}$ which have oriented contact with the respective pairs of lines $(\Lsf^1,\Lsf^2),\ldots,(\Lsf^7,\Lsf^8)$ and there exists a fifth circle $\csf^{1357}$ which has oriented contact with the lines $\Lsf^1,\Lsf^3,\Lsf^5,\Lsf^7$ then the lines $\Lsf^2,\Lsf^4,\Lsf^6,\Lsf^8$ touch a unique circle $\csf^{2468}$ in an oriented manner.
\end{theorem}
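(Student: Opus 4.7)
The plan is to mirror the proof of the Miquel-type theorem in hyperbolic geometry (Section 6.3) by replacing double numbers with \emph{dual numbers} \cite{Yaglom68}. Recall that the dual numbers form the commutative algebra $z = x + \epsilon y$, $\bar z = x - \epsilon y$ with $\epsilon^{2}=0$, so that $|z|^{2} = z\bar z = x^{2}$. The structural ingredients required are: (i) a bijective parametrisation of oriented lines in the plane by dual numbers such that oriented contact of two lines corresponds to $|z_{1}-z_{2}|^{2}=0$; (ii) an algebraic criterion for four oriented lines to share a common tangent oriented circle in terms of the cross-ratio of the associated dual numbers; and (iii) the combinatorial hexahedral cross-ratio identity which underpins Theorem~\ref{rhombic_theorem} and the M\"obius and hyperbolic versions of Miquel's theorem.

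For step~(ii), the tangency condition $\langle \Vsf_{|},\Vsf_{\ocircle}\rangle=0$ yields the linear relation $v_{(k)}x_{0}+w_{(k)}y_{0}-r=d_{(k)}$ for the parameters $(x_{0},y_{0},r)$ of the putative common tangent circle. Hence four oriented lines have a common tangent oriented circle if and only if the consistency determinant of this $4\times 3$ overdetermined linear system vanishes, that is,
\[
 \left|\begin{array}{cccc} 1 & v_{(1)} & w_{(1)} & d_{(1)}\\ 1 & v_{(2)} & w_{(2)} & d_{(2)}\\ 1 & v_{(3)} & w_{(3)} & d_{(3)}\\ 1 & v_{(4)} & w_{(4)} & d_{(4)} \end{array}\right| = 0.
\]
Via elementary column and row operations, this determinant is seen to coincide, up to a nonzero scalar factor, with $\det(1,z_{(\cdot)},\bar z_{(\cdot)},|z_{(\cdot)}|^{2})$ for the Laguerre parametrisation. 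The factorisation identity underlying~(\ref{E66}), being purely formal, holds verbatim in each of the algebras $\C$, $\R[j]/(j^{2}-1)$ and $\R[\epsilon]/(\epsilon^{2})$, and implies that the tangency condition is equivalent to the dual-number cross-ratio $\mathsf{cr}(z_{(1)},z_{(2)},z_{(3)},z_{(4)})$ being ``dual-real'', i.e.\ having vanishing dual part.

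For step~(iii), I view the Miquel configuration combinatorially as a hexahedron with the eight oriented lines $\Lsf^{1},\ldots,\Lsf^{8}$ at the vertices and the six circles $\csf^{1234},\ldots,\csf^{2468}$ at the quadrilateral faces (cf.\ Figure~\ref{miquel}). Assigning to each face the dual-number cross-ratio of its four vertex-lines, and orienting the faces as in Figure~\ref{dodecahedron} so that each edge contributes two cancelling factors, yields $\prod_{k=1}^{6}\mathsf{cr}_{k} = 1$. By hypothesis, five of the circles exist, so five of these cross-ratios are dual-real; writing the remaining cross-ratio as $a + \epsilon b$, the dual part of the product identity then forces $b = 0$. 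Consequently, $\Lsf^{2},\Lsf^{4},\Lsf^{6},\Lsf^{8}$ have oriented contact with a common circle $\csf^{2468}$, whose uniqueness follows because any three non-parallel oriented lines already determine a unique oriented tangent circle.

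The main obstacle is the presence of zero divisors in the dual-number algebra: the cross-ratios are only meaningful when each factor $(z_{i}-z_{j})$ is invertible, i.e.\ has nonzero real part, and the cancellation in the product identity additionally requires the real parts of the five known cross-ratios to be nonzero. Both amount to a genericity condition that no two oriented lines joined by an edge of the hexahedron are themselves in oriented contact and that no face of the hexahedron is degenerate, which is the Laguerre analogue of the ``non-touching'' hypothesis applied in the discussion of~(\ref{E74}). Under this assumption, the argument proceeds exactly as in the complex- and double-number cases.
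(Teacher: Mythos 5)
Your proposal is correct and follows essentially the same route as the paper: parametrise oriented lines by dual numbers via the tangency determinant $\bigl|\,1\;\dualu_{(\cdot)}\;\dualv_{(\cdot)}\;\dualu_{(\cdot)}^2\,\bigr|=0$, invoke the formal factorisation identity (\ref{E66}) to convert this into reality of the dual cross-ratio, and close with the hexahedral product identity (\ref{E75}), exactly the ``purely combinatorial'' argument the paper sketches in Section 6.4.1. Your explicit treatment of the genericity conditions (invertibility of the edge differences, i.e.\ no two adjacent lines parallel with the same orientation, matching the paper's exclusion of parallel triples) and of uniqueness is a welcome completion of details the paper leaves implicit; only note that your symbol $\epsilon$ for the dual unit clashes with the paper's use of $j$ (with $\epsilon$ reserved for $j^2=\epsilon$ in (\ref{E96})).
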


\subsubsection{The geometry of dual numbers}

Once again, the proof of the above variant of Miquel's theorem is of a purely combinatorial nature exploiting the geometry of cross-ratios of {\em dual numbers} \cite{Yaglom68}
\bela{E80}
  z = \dualu + j \dualv\quad \bar{z} = \dualu - j \dualv,\quad |z|^2 = z\bar{z} = \dualu^2,\quad j^2 = 0.
\ela
To this end, one associates a line with a dual number by setting
\bela{E81}
  \dualu = \tan\frac{\theta}{2},\quad \dualv = s\tan\frac{\theta}{2},
\ela
where $\theta$ constitutes the oriented angle between the $x$-axis and the oriented (unit) tangent $(\tilde{v},\tilde{w})$ to the line and $s$ denotes the signed distance between the origin and the intersection of the line with the $x$-axis as indicated in Figure \ref{dual}. 
\begin{figure}
\centerline{\includegraphics[scale=0.5]{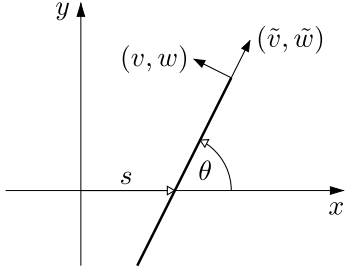}}
\caption{The geometry of a dual number $\dis z = \tan\frac{\theta}{2}\, (1 + js)$.}
\label{dual}
\end{figure}
Here, we make the assumption that we can always choose the coordinate system in such a manner that no line occurring in our discussion is ``horizontal''. This corresponds to excluding dual numbers of vanishing modulus, that is, purely ``imaginary'' dual numbers. The components of the unit tangent and the distance $s$ read
\bela{E82}
  \tilde{v} = \frac{1-\dualu^2}{1+\dualu^2},\quad \tilde{w} = \frac{2\dualu}{1+\dualu^2},\quad s = \frac{\dualv}{\dualu}
\ela
and the oriented unit normal $(v,w)$ to the line and the signed distance $d$ to the line are given by
\bela{E83}
  (v,w) = (-\tilde{w},\tilde{v}),\quad d = -s\tilde{w}.
\ela

We now observe that four lines represented by 
\bela{E83a}
 \Vsf_{(k)} = (v_{(k)},w_{(k)},0,2d_{(k)},1),\quad k=1,\ldots,4 
\ela
are in oriented contact with a circle represented by $\Vsf_\ocircle$ if the overdetermined linear system
\bela{E84}
 \left(\bear{ccc}v_{(k)}& w_{(k)}&\dis -1\ear\right)\left(\bear{c}x\\y\\ r\ear\right) = d_{(k)}
\ela
admits a solution. In terms of the real and imaginary parts of dual numbers $z_{(k)}$ representing four (non-horizontal) lines, the condition for the existence of a (unique) circle may therefore be expressed as
\bela{E84a}
   \left|\bear{cccc} 1 & \dualu_{(\cdot)} & \dualv_{(\cdot)} & \dualu_{(\cdot)}^2\ear\right| = 0
\ela
which, once again, leads to 
\bela{E85}
   \left|\bear{cccc} 1 & z_{(\cdot)} & \bar{z}_{(\cdot)} & |z_{(\cdot)}|^2\ear\right| = 0.
\ela
Hence, by virtue of the identity (\ref{E66}), we have retrieved the fact \cite{Yaglom68} that four lines represented by dual numbers $z_{(k)}$ are in oriented contact with a circle if and only if their cross-ratio (\ref{E74}) is ``real''. It is noted that for such a circle to exist, no three lines can be parallel so that, modulo a re-ordering of the lines, the cross-ratio (\ref{E74}) is well defined since $|z_{(2)}-z_{(3)}|^2\neq 0$ and $|z_{(4)}-z_{(1)}|^2\neq0$.

\subsubsection{Anti-fundamental line-circle complexes}

In analogy with the previous two cases, we now consider maps of the type
\bela{86}
 \bear{rl}
  \Lsf : \De_3 & \rightarrow \{\mbox{oriented lines in $\R^2$}\}\as
  \csf : \Do_3 & \rightarrow \{\mbox{oriented circles in $\R^2$}\}
 \ear
\ela
which we combine to maps $(\Lsf,\csf)$ defined on $\Z^3 = \De_3\cup\Do_3$. It turns out that the Laguerre analogues of fundamental point-circle and geodesic-circle complexes are not fundamental but ``anti-fundamental'' in a sense to be discussed below.

\begin{definition}
A configuration of oriented lines and circles combinatorially attached to the vertices of the even and odd sublattices of a $\Z^3$ lattice respectively is termed an {\em anti-fundamental line-circle complex} if the lines and circles are incident along the edges of the $\Z^3$ lattice.
\end{definition}

The above terminology is a reflection of the nature of the relation between four lines and four circles which are attached to the vertices of an elementary cube of a $\Z^3$ lattice and are incident along edges. Thus, if we choose three circles $\csf_1,\csf_2,\csf_3$ which are in oriented contact with a given line $\Lsf$ then, generically, there exist three unique lines $\Lsf_{lm}\neq\Lsf$, $l\neq m$ which touch the respective pairs of circles $(\csf_l,\csf_m)$ in an oriented manner. The fourth circle $\csf_{123}$ which is in oriented contact with the lines $\Lsf_{12},\Lsf_{23},\Lsf_{13}$ is then uniquely determined. On the other hand, if we regard these eight lines and circles as Lie circles then there exist two eighth Lie circles which are in oriented contact with the Lie circles $\Lsf_{12},\Lsf_{23},\Lsf_{13}$. One of them is given by $\csf_{123}$ as defined above and the other one is the ``point at infinity'' $\csf_{\infty}$ represented by $\Vsf_\infty$. It turns out that it is $\csf_\infty$ which is correlated to the Lie circle $\Lsf$. Thus, $\Lsf$ and $\csf_{123}$ are anti-correlated. In order to make good this assertion, we temporarily return to the parametrisation
\bela{E87}
 \bear{c}
 \Vsf = (M^{\emptyset,\emptyset},M^{45,45},M^{4,4},M^{5,5},M^{4,5})\as
M^{\emptyset,\emptyset}=1,\quad M^{i,k} = M^{ik},\quad M^{45,45} = \left|\bear{cc}M^{44}&M^{45}\\ M^{54}&M^{55}\ear\right|
\ear
\ela
of the points of the Lie quadric $Q^3$ with $M^{54}=M^{45}$ and the metric (\ref{E21a}). It is recalled \cite{BobenkoSchief15} that the $M$-system (\ref{E12}) governs the evolution of the minors 
\bela{E88}
  M^{A,B} = \det(M^{a_\alpha b_\beta})_{\alpha,\beta=1,\ldots,\sigma}
\ela
of the matrix $\mathcal{M}$ according to
\bela{E89}
  M^{A,B}_C = \frac{M^{CA,CB}}{M^{C,C}},
\ela
where the entries of each multi-index $A = (a_1\cdots a_\sigma)$, $B = (b_1\cdots b_\sigma)$ and $C = (c_1\cdots c_{\tilde{\sigma}})$ are assumed to be distinct elements of $\{1,2,3,4,5\}$ and $\{1,2,3\}$ respectively. Furthermore, for this to make sense, it is required that $C$ and $A\cup B$ be disjoint and we conclude that, for $C=(123)$,
\bela{E90}
  \Vsf_{123} \sim (M^{123,123},M^{12345,12345},M^{1234,1234},M^{1235,1235},M^{1234,1235}).
\ela
In order to make contact with Laguerre geometry, we choose the light-like vector $\Esf = (1,0,0,0,0)$ so that lines are represented by points $\Vsf\in Q^3$ for which $M^{45,45}= \det\hat{\mathcal{M}}$ vanishes but $\hat{\mathcal{M}}\neq 0$. The case $\hat{\mathcal{M}}=0$ corresponds to the point at infinity represented by $\Vsf_\infty = \Esf$. Hence, the four lines $\Lsf,\Lsf_{12},\Lsf_{23},\Lsf_{13}$ interpreted as Lie circles give rise to the vanishing principal minors
\bela{E91}
  M^{45,45} = M^{1245,1245} = M^{2345,2345} = M^{1345,1345} = 0.
\ela
On the other hand, since the Lie circles $\csf_1,\csf_2$ and $\csf_3$ are not lines in the Laguerre geometric sense, we deduce that $\det\hat{\calM}_l\neq0$, $l=1,2,3$ so that
\bela{E91a}
  M^{145,145}M^{245,245}M^{345,345}\neq 0.
\ela
Now, the identity
\bela{E91b}
 \bear{rcl}
  &&M^{35,45} M^{1245,1245} -  M^{25,45} M^{1245,1345}\as &+& M^{15,45} M^{1245,2345} - M^{45,45} M^{1235,1245} = 0,
 \ear
\ela
which may be directly verified for symmetric matrices $\calM$, and its two analogues obtained via permutation of the indices $1,2,3$ reveal that 
\bela{E91c}
  \left(\bear{ccc} 0&\phantom{-}M^{35,45}&-M^{25,45}\\
                        M^{35,45} & 0 & \phantom{-}M^{15,45}\\
                       M^{25,45} & \,\,-M^{15,45}\,\, &  0\ear\right)
  \left(\bear{c} M^{1245,1345}\\ M^{1245,2345}\\ M^{1345,2345}\ear\right) = 0.
 \ela
The determinant $2M^{15,45}M^{25,45}M^{35,45}$ associated with the above linear system is non-vanishing since the minors $M^{15,45}, M^{25,45}$ and $M^{35,45}$ cannot be zero by virtue of (\ref{E91a}) and $M^{45,45}=0$. Hence, we conclude that
\bela{E91d}
  M^{1245,1345} = M^{1245,2345} = M^{1345,2345} = 0
\ela
so that all minors of the form $M^{\ast\ast 45,\ast\ast 45}$ vanish. This implies, in turn, that the symmetric matrix $\mathcal{M}$ has rank 3 due to the non-degeneracy condition (\ref{E91a}). In particular, all minors in $\Vsf_{123}$ except for $M^{123,123}$ vanish or, equivalently, \mbox{$\hat{\calM}_{123}=0$}. The latter confirms that $\Vsf$ and $\Vsf_{123}=\Vsf_\infty$ are correlated. This fact translates into a geometric theorem which states that if the circles $\csf$ and $\csf_{12},\csf_{23},\csf_{13}$ in Figure \ref{eight_circle} are replaced by lines then, remarkably, the circles $\csf^{12}_3,\csf^{23}_1,\csf^{13}_2$ turn out to be lines as indicated in Figure \ref{laguerre_infinity}.
\begin{figure}
  \centerline{\includegraphics[scale=0.35]{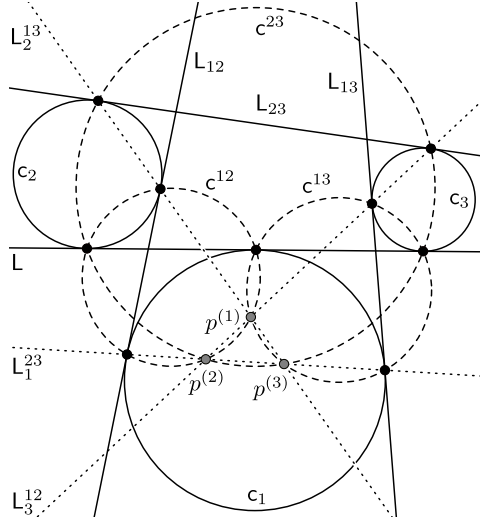}}
  \caption{The analogue of Figure \ref{eight_circle} wherein the circles $\csf$ and $\csf_{12},\csf_{23},\csf_{13}$ are replaced by lines $\Lsf$ and $\Lsf_{12},\Lsf_{23},\Lsf_{13}$. As a result, the circles $\csf^{12}_3,\csf^{23}_1,\csf^{13}_2$ become lines $\Lsf^{12}_3,\Lsf^{23}_1,\Lsf^{13}_2$.}
\label{laguerre_infinity}
\end{figure}

The construction and geometric integrability of anti-fundamental line-circle complexes may be dealt with in the same manner as in the case of fundamental point-circle and geodesic-circle complexes, leading to the following theorem.

\begin{theorem}
Anti-fundamental line-circle complexes $(\Lsf,\csf)$ are uniquely determined by one-dimensional Cauchy data of the combinatorial type depicted in Figure \ref{star} (left). These may be extended to lattices of $\Z^4$ combinatorics by virtue of the canonical Laguerre geometric analogue of Clifford's circle theorem.
\end{theorem}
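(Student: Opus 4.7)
The plan is to transcribe the M\"obius scheme used in the proof of Theorem \ref{theorem_cauchy} and its one-dimensional refinement into Laguerre geometry via the dictionary established immediately above, substituting the algebra of dual numbers for that of the complex numbers. Thus I interpret an anti-fundamental line-circle complex $(\Lsf,\csf)$ as maps $\Lsf:\De_3\to\{\mbox{oriented lines in }\R^2\}$ and $\csf:\Do_3\to\{\mbox{oriented circles in }\R^2\}$, decompose $\Z^3=\bigcup_k A_2^k$, and read off from the dictionary the two relevant ``elementary-cube completion'' rules: three lines in oriented contact with a common circle determine that circle uniquely, and two circles in oriented contact with a common line determine the second common line uniquely. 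These are precisely the uniqueness statements exploited in the M\"obius construction.

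Second, I would establish the Laguerre analogue of the rhombic dodecahedral theorem (Theorem \ref{rhombic_theorem}) by the same combinatorial argument. The bipartite orientation of the 12 faces of the rhombic dodecahedron (Figure \ref{dodecahedron}) supplies 12 dual-number cross-ratios $\mathsf{cr}_k$ whose product equals $1$; the identity (\ref{E66}), which holds verbatim for dual numbers, together with the Laguerre concyclicity criterion (\ref{E85}), converts ``11 cross-ratios real implies the 12th is real'' into the asserted incidence statement (three grey circles admit a common tangent iff three dashed circles do) in the Laguerre dictionary. With this analogue in hand, the propagation of constrained two-dimensional Cauchy data $(\Lsf(A_2^{-2}),\csf(A_2^{-1}),\Lsf(A_2^0),\csf(A_2^1),\Lsf(A_2^2))$ from the slab $|k|\le 2$ to the entire $\Z^3$ lattice follows the argument of Theorem \ref{theorem_cauchy} mutatis mutandis.

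Third, the reduction to one-dimensional Cauchy data mirrors Figure \ref{star}. Starting from lines prescribed on a star-shaped subset of $A_2^0$ (with an arbitrary choice of orientation for the circles arising from the three central triangles), the $\nabla$- and $\Delta$-type triangles determine, via the three-lines-touching-a-circle rule, the circles on $A_2^{\pm 1}$, and then the lines on $A_2^{\pm 2}$. Compatibility of the nine-point rhombic cells of Figure \ref{rhombus} is guaranteed by the Laguerre rhombic dodecahedral theorem from the previous step, and iteration fills each of the three sectors. Finally, the $\Z^4$ extension is obtained from the Laguerre analogue of Clifford's $\mathcal{C}_4$ theorem: since the classical Clifford chain is a purely combinatorial consequence of Miquel's theorem via cross-ratios, and the Laguerre Miquel theorem has already been proven in Section 6.4 using dual numbers, its Laguerre analogue is automatic and yields the unique existence of the hypercube extension at each elementary cube, hence on the union of two parallel $\Z^3$ slices of $\Z^4$, with iteration completing the lattice.

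The main obstacle I anticipate is the degenerate behaviour of the dual-number cross-ratio when $|z_{(i)}-z_{(j)}|^2=0$, which corresponds to the occurrence of parallel lines in the Laguerre picture and thus to triples of lines whose ``inscribed circle'' degenerates to the point at infinity $\Vsf_\infty\in Q^2$. Generic Cauchy data avoid these degeneracies, and consistency at degenerate configurations is handled by the natural Lie-geometric compactification in which $\Vsf_\infty$ plays the role of the missing circle. This is consistent with the anti-correlation observation established just above for an elementary cube (the unique Lie circle correlated to $\Lsf$ is $\Vsf_\infty$, not $\csf_{123}$), and so accounts for the \emph{anti}-fundamental terminology without affecting the combinatorial propagation of the Cauchy data or the validity of the $\Z^4$ extension.
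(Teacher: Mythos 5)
Your proposal is correct and follows essentially the same route as the paper, which itself only remarks that the construction and integrability of anti-fundamental line-circle complexes ``may be dealt with in the same manner'' as the point-circle and geodesic-circle cases; you have simply made explicit the transcription via the M\"obius--Laguerre dictionary, the dual-number version of the rhombic dodecahedral cross-ratio argument, and the Laguerre Clifford theorem for the $\Z^4$ extension. Your closing remarks on the degeneracies at $|z_{(i)}-z_{(j)}|^2=0$ and on the role of $\Vsf_\infty$ in the anti-correlation are consistent with the paper's own observations preceding the theorem.
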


In algebraic terms, anti-fundamental line-circle complexes are governed by a suitable re-interpretation of the nine-point equation
\bela{E92}
  \frac{\Msf_{(1)}}{\bar{\Msf}^*_{(1)}} + \frac{\Msf_{(2)}}{\bar{\Msf}^*_{(2)}}  = 1.
\ela
To this end, we first note that the defining relation $j^2=0$ of dual numbers gives rise to the identity
\bela{E93}
  \Re\left[\frac{\Msf_{(1)}}{\bar{\Msf}^*_{(1)}} + \frac{\Msf_{(2)}}{\bar{\Msf}^*_{(2)}}\right] = 
  \Re\left[\frac{\Msf_{(1)}}{\Msf^*_{(1)}} + \frac{\Msf_{(2)}}{\Msf^*_{(2)}}\right]
\ela
so that, by virtue of (\ref{E92}), the real part of the nine-point equation for dual numbers vanishes identically. Accordingly, the latter is not sufficient to determine $c^0$ in terms of the other eight dual numbers. Nevertheless, if we consider the generalised complex numbers \cite{Yaglom68,HarkinHarkin04}
\bela{E96}
 z = \dualu + j \dualv,\quad \bar{z} = \dualu - j \dualv,\quad |z|^2 = z\bar{z} = \dualu^2 + \epsilon \dualv^2,  \quad j^2 = \epsilon,
 \ela
where $\epsilon$ is an arbitrary but fixed real number, then the same argument shows that, on removing the denominator, the nine-point equation (\ref{E92}) for generalised complex numbers is of the form
\bela{E97}
  j p(j)= 0,
\ela
where $p$ is a ``real'' polynomial in $j$. If we now make use of the relation $j^2=\epsilon$ then this assumes the form
\bela{E98}
  j[f(\epsilon) + jg(\epsilon)] = 0.
\ela
Hence, for $\epsilon\neq0$, this is equivalent to the two equations $f=g=0$ which uniquely determine $c^0$. If $\epsilon=0$ then (\ref{E98}) may not be solved for $f+ jg$ and we only obtain $f=0$ as a necessary condition. However, it turns out that we can regard the case $\epsilon=0$ as a limiting case in which the condition $g=0$ is still valid so that anti-fundamental line-circle complexes are governed by $p(j) = 0$ or, in short-hand notation,
\bela{E100}
  \left.\left[\frac{1}{j}\left(\frac{\Msf_{(1)}}{\bar{\Msf}^*_{(1)}} + \frac{\Msf_{(2)}}{\bar{\Msf}^*_{(2)}}  - 1\right)\right]\right|_{\epsilon=0} = 0.
\ela

\section{Perspectives}

In this paper, we have discussed the geometry of canonical reductions of the integrable $M$-system (cf.\ Figure \ref{summary}). We have analysed in detail real reductions of the $M$-system, while the case of Hermitian $\mathcal{M}$ has been consigned to a future publication. We have briefly indicated how our analysis leads to an integrable nine-point lattice equation over the complex, dual and double numbers. It is therefore natural to consider the $M$-system itself over algebras of hypercomplex numbers \cite{KantorSolodovnikov89} such as the associative-commutative algebras (\ref{E96}). Up to isomorphisms, the latter consist of the complex, dual and double numbers corresponding to $\epsilon=-1,0,1$ respectively. It is evident that the algebraic integrability of the $M$-system is not affected by the choice of $\epsilon$ and, formally, line geometry over these hypercomplex numbers may be defined in the standard manner. In this way, we obtain fundamental line complexes over complex, dual and double numbers. Moreover, the Hermitian reduction $\calM^\dagger = \calM$ remains admissible. In the case of complex numbers, this gives rise to the above-mentioned interpretation of the corresponding fundamental line complexes as fundamental sphere complexes in Lie geometry. In general, the set of points in the ``hypercomplex'' Pl\"ucker quadric associated with the Hermitian reduction are given by
\bela{E102}
  \Vsf = \left(1,\left|\bear{cc}M^{44}&M^{45}\\ \bar{M}^{45}&M^{55}\ear\right|,M^{44},M^{55},\bar{M}^{45},M^{45}\right).
\ela
As in the complex case, real homogeneous coordinates are obtained by considering
\bela{E103}
  \hat{\Vsf} = \left(1,\left|\bear{cc}M^{44}&M^{45}\\ \bar{M}^{45}&M^{55}\ear\right|,M^{44},M^{55},\Re( M^{45}),\Im(M^{45})\right)
\ela
and the induced metric of the space of homogeneous coordinates is given by
\bela{E104}
  \diag\left[\left(\bear{cc}0&1\\1&0\ear\right),-\left(\bear{cc}0&1\\1&0\ear\right),2,-2\epsilon\right].
\ela
Once again, the condition $\langle\hat{\Vsf},\hat{\Vsf}_\ast\rangle = 0$ is equivalent to $\langle\Vsf,\Vsf_\ast\rangle=0$. Hence, dual numbers ($\epsilon=0$) and double numbers ($\epsilon=1$) give rise to circle complexes in Lie geometry and line complexes in real Pl\"ucker geometry respectively. In fact, since the real part of the $M$-system over dual numbers is identical with the $M$-system for the real part of dual numbers and the imaginary part is effectively irrelevant in the parametrisation of $\hat{V}$, the (real) circle complexes associated with dual numbers turn out to be fundamental. Moreover, if we set
\bela{E105}
  M^{ik} = A^{ik} + j B^{ik},\quad A^{ik}= A^{ki},\quad B^{ik} = -B^{ki}
\ela
for $\epsilon=1$ then it is readily verified that the Hermitian reduction of the $M$-system is equivalent
to the real $M$-system for
\bela{E106}
  \tilde{M}^{ik} = A^{ik} + B^{ik}
\ela
and, hence, the real line complexes associated with double numbers are likewise fundamental. Accordingly, we have retrieved the standard fundamental sphere, circle and real line complexes by considering the same type of reduction of the $M$-system over different hypercomplex numbers. Finally, the same approach applied to quaternions, which constitute particular non-commutative hypercomplex numbers, leads to fundamental complexes of four-dimensional hyperspheres in Lie geometry. This is the subject of a forthcoming publication.

\section*{Acknowledgements}

This research was supported by the DFG Collaborative Research Centre\linebreak SFB/TRR 109 {\em Discretization in Geometry and Dynamics} and the Australian Research Council (ARC). One of the authors (W.K.S.) would like to thank Alastair King for enlightening discussions during the workshop  {\em Discrete Differential Geometry}, Mathematisches Forschungsinstitut Oberwolfach, March 1st--7th, 2015 which resulted in Theorems \ref{alastair1} and \ref{alastair2}. We also wish to express our gratitude to Tim Hoffmann whose initial conjecture led to Theorem \ref{tim} and to Boris Springborn for numerous insightful comments on the subject matter.

\end{document}